\documentclass[reqno,12pt,letterpaper]{amsart}
\usepackage{amsmath,amssymb,mathrsfs,graphicx,amsthm, mathtools}
\usepackage[dvipsnames]{xcolor}
\usepackage[colorlinks=true,linkcolor=Red,citecolor=Green]{hyperref}
\usepackage[T1]{fontenc}
\usepackage[utf8]{inputenc}
\usepackage{tikz-cd}

\setlength{\textheight}{8.50in}\setlength{\oddsidemargin}{0.00in}
\setlength{\evensidemargin}{0.00in} \setlength{\textwidth}{6.08in}
\setlength{\topmargin}{0.00in} \setlength{\headheight}{0.18in}
\setlength{\marginparwidth}{1.0in}
\setlength{\abovedisplayskip}{0.2in}
\setlength{\belowdisplayskip}{0.2in}
\setlength{\parskip}{0.05in}

\newtheorem{proposition}{Proposition}[section]
\newtheorem{theorem}[proposition]{Theorem}
\newtheorem{lemma}[proposition]{Lemma}

\theoremstyle{definition}
\newtheorem{definition}[proposition]{Definition}

\theoremstyle{remark}
\newtheorem{remark}[proposition]{Remark}
\theoremstyle{definition}
\newtheorem{question}[proposition]{Question}

\numberwithin{equation}{section}

\newcommand{\N}{\mathbb{N}}
\newcommand{\R}{\mathbb{R}}
\newcommand{\Z}{\mathbb{Z}}
\renewcommand{\div}{\mathrm{div}}

%%%%%%%%%%%%%%%%%%%%%%%%%%%%%%%%%%%%%%%%%%%%%%%%%%%%%%%%%%%%%%%%%%%%%%%%%%%%%%%%
\title[Smooth orbit equivalence rigidity for dissipative geodesic flows]{Smooth orbit equivalence rigidity for dissipative geodesic flows}
\author[J.~Echevarría Cuesta]{Javier {Echevarría Cuesta}}
\address{Department of Pure Mathematics and Mathematical Statistics, University of Cambridge, Cambridge CB3 0WB, UK}
\email{je396@cam.ac.uk}

\begin{document}
\begin{abstract}
Let $M$ be a smooth closed oriented surface. Gaussian thermostats on $M$ correspond to the geodesic flows arising from metric connections, including those with non-zero torsion. These flows may not preserve any absolutely continuous measure. We prove that if two Gaussian  thermostats on $M$ with negative thermostat curvature are related by a smooth orbit equivalence isotopic to the identity, then the two background metrics are conformally equivalent via a smooth diffeomorphism of $M$ isotopic to the identity. We also give a relationship between the thermostat forms themselves. Finally, we prove the same result for Anosov magnetic flows.
\end{abstract}
\maketitle

\section{Introduction}
Let $(M,g)$ be a closed oriented Riemannian surface and let $\lambda\in \mathcal{C}^\infty(SM,\R)$ be a smooth function on the unit tangent bundle $\pi: SM\to M$.  We concern ourselves with the dynamical system governed by the equation 
\begin{equation*}%\label{eq:fundamental-equation}
\nabla_{\dot{\gamma}}\dot{\gamma}=\lambda(\gamma, \dot{\gamma})J\dot{\gamma},
\end{equation*}
where $J:TM\to TM$ is the complex structure on $M$ induced by the orientation.

This equation defines a flow $\varphi_t(\gamma(0),\dot{\gamma}(0))\coloneqq(\gamma(t), \dot{\gamma}(t))$ on $SM$ which reduces to the geodesic flow when $\lambda=0$. The flow models the motion of a particle under the influence of a force orthogonal to the velocity and with magnitude $\lambda$. Its generating vector field is $F=X+\lambda V$, where $X$ is the geodesic vector field on $SM$ and $V$ is the vertical vector field. The system $(M, g, \lambda)$ is called a \textit{thermostat}. 

If $\lambda$ does not depend on velocity, that is, if it corresponds to a function on $M$, then $\varphi_t$ is the \textit{magnetic flow} associated with the magnetic field $\lambda \mu_a$, where  $\mu_a$ is the area form of $(M,g)$. When $\lambda$ depends linearly on velocity, that is, when it corresponds to a $1$-form on $M$, we instead obtain a \textit{Gaussian thermostat}, which is reversible in the sense that the flip $(x,v)\mapsto (x, -v)$ on $SM$ conjugates $\varphi_t$ with $\varphi_{-t}$ (just as in the case of geodesic flows). The resulting flows are interesting from a dynamical point of view because, in contrast to geodesic or magnetic flows, they may not preserve any absolutely continuous measure (see \cite{dairbekov07}). Gaussian thermostats also appear in geometry as the geodesic flows of metric connections, including those with non-zero torsion (see \cite{przytycki08}). We thus think of them as dissipative geodesic flows. 

We are interested in rigidity results for thermostats satisfying the Anosov property. By \cite[Theorem A]{ghys84}, these flows are topologically orbit equivalent to the geodesic flow of any metric with constant negative curvature on $M$ via a Hölder homeomorphism which is in fact isotopic to the identity. In particular, this tells us that the flows of thermostats are transitive and topologically mixing, so the idea is that the richness of the chaotic orbits should allow one to recover information about the system. 

The set-up is as follows. Given two thermostats $(M, g, \lambda)$ and $(M, \tilde{g}, \tilde{\lambda})$ on the same surface $M$, we assume there is a smooth orbit equivalence $\phi : \widetilde{S}M\to SM$ which is isotopic to the identity. Here, $\widetilde{S}M$ is the unit tangent bundle with respect to the metric $\tilde{g}$ on $M$, and orbit equivalence means that oriented orbits are mapped to oriented orbits, that is, there exists $c\in \mathcal{C}^\infty(SM, \R_{>0})$ such that $\phi_* \widetilde{F}=cF$. In particular, $\phi$ is a conjugacy if $c$ is identically $1$. There is a natural identification of $SM$ with $\widetilde{S}M$ by scaling the fibres via the map 
\begin{equation}\label{eq:scaling-map}
s: SM\to \widetilde{S}M, \qquad s(x,v)\coloneqq (x, v/\Vert v\Vert_{\tilde{g}}).
\end{equation}
By saying that $\phi$ is isotopic to the identity, we mean that $s\circ \phi :\widetilde{S}M\to \widetilde{S}M$ is isotopic to the identity in the usual sense. 

\begin{question}
If both thermostat flows are Anosov, what is the relationship, if any, between $(g, \lambda)$ and $(\tilde{g}, \tilde{\lambda})$? 
\end{question}

The work in \cite{guillarmou25} gives an answer when $\lambda=\tilde{\lambda}=0$ and $\phi$ is a conjugacy: $g$ and $\tilde{g}$ must be isometric via an isometry isotopic to the identity. Rather than assuming a smooth conjugacy isotopic to the identity, they work with the equivalent condition that the metrics have the same marked length spectrum. This equivalence also holds for magnetic flows (using \cite[Theorem 1.1]{gogolev23}). However, while the notion of marked length spectrum still makes sense for any thermostat (each non-trivial free homotopy class on $M$ contains a unique closed thermostat geodesic by \cite[Theorem A]{ghys84}), equality of marked length spectra only guarantees a Hölder continuous conjugacy.

Still with $\phi$ as a conjugacy, the paper \cite{grognet99} deals with the mixed case where $(M, g, \lambda)$ is a magnetic system and $(M, \tilde{g}, 0)$ is geodesic, but at the cost of additional assumptions: $\tilde{g}$ has negative Gaussian curvature, $M$ has the same area with respect to $g$ and $\tilde{g}$, and neither $\lambda$ nor its first derivative are too big. The conclusion is then that $g$ and $\tilde{g}$ are isometric via an isometry isotopic to the identity and that $\lambda=0$. 

More recently, progress has been made in \cite{marshall-reber23, marshall-reber24} to understand a deformative version of our question in the purely magnetic case, framed through the lens of marked length spectrum rigidity. 

\subsection{Main results} Beyond its physical motivation, the magnetic case represents the first step towards the broader goal of understanding thermostats: it corresponds to the case where $\lambda=\lambda_0$ has Fourier degree $0$ (see \S\ref{subsection:Fourier}). Our first result is the following theorem.

\begin{theorem}\label{theorem:second-theorem}
Let $(M, g, \lambda)$ and $(M, \tilde{g}, \tilde{\lambda})$ be two Anosov magnetic systems. If there is a smooth orbit equivalence isotopic to the identity between them, then there exists a smooth diffeomorphism $\psi : M\to M$ isotopic to the identity such that $\psi^*\tilde{g}=e^{2f}g$ for some $f\in \mathcal{C}^\infty(M, \R)$. Moreover, if the orbit equivalence is a conjugacy and $f=0$, then  $\lambda=0$ if and only if $\tilde{\lambda}=0$. 
\end{theorem}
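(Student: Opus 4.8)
The plan is to upgrade the orbit equivalence to a rigid identity between two natural one-forms on $SM$, and then extract the conformal geometry by a vertical Fourier analysis (\S\ref{subsection:Fourier}).

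The first step uses that an Anosov magnetic flow on a surface is, up to positive time reparametrisation, a Reeb flow. Since $M$ supports an Anosov flow, $\chi(M)<0$, so $\pi^*(\lambda\mu_a)$ is exact on $SM$ (it lies in the image of $H^2(M;\R)\to H^2(SM;\R)$, which vanishes by the Gysin sequence); write $\pi^*(\lambda\mu_a)=d\eta_\lambda$ with $\eta_\lambda$ a one-form of vertical Fourier degree $0$, so that the closed two-form $\Omega_\lambda$ generating the flow satisfies $\Omega_\lambda|_{SM}=d\alpha_\lambda$, where $\alpha_\lambda:=\alpha_0-\eta_\lambda$ and $\alpha_0$ is the canonical contact form of $(M,g)$. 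The Anosov hypothesis makes $\alpha_\lambda$ contact with $\alpha_\lambda(F)>0$, hence $F$ spans $\ker d\alpha_\lambda$, $\ker\alpha_\lambda=E^s\oplus E^u$ (the stable and unstable bundles), and $\iota_F d\alpha_\lambda=0$; similarly for $(M,\tilde g,\tilde\lambda)$ with $\widetilde\alpha_{\tilde\lambda}=\widetilde\alpha_0-\eta_{\tilde\lambda}$ on $\widetilde{S}M$. Then $\phi_*\widetilde F=cF$ gives $\iota_{\widetilde F}\phi^*d\alpha_\lambda=\phi^*(c\,\iota_F d\alpha_\lambda)=0$, so $\phi^*d\alpha_\lambda$ and $d\widetilde\alpha_{\tilde\lambda}$ are closed two-forms with the same one-dimensional kernel $\R\widetilde F$; as area forms transverse to $\widetilde F$ they satisfy $\phi^*d\alpha_\lambda=u\,d\widetilde\alpha_{\tilde\lambda}$ for some $u>0$, and differentiating yields $du\wedge d\widetilde\alpha_{\tilde\lambda}=0$, hence $\widetilde F u=0$, hence $u\equiv a$ is constant (a smooth Anosov flow preserving a volume is transitive). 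Therefore
\[
\phi^*\alpha_\lambda=a\,\widetilde\alpha_{\tilde\lambda}+\sigma,\qquad d\sigma=0,
\]
with $[\sigma]\in H^1(\widetilde{S}M;\R)=\pi^*H^1(M;\R)$, so $\sigma=\pi^*\theta_0+dh$ for a closed one-form $\theta_0$ on $M$ and some $h\in\mathcal{C}^\infty(\widetilde{S}M,\R)$.

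The second step pulls everything back to one copy of $SM$ via $s$ and expands the displayed identity into vertical Fourier modes. The corrections $\eta_\lambda,\eta_{\tilde\lambda}$ and $\pi^*\theta_0$ lie in degree $0$, while $\alpha_0$ and $\widetilde\alpha_0$ lie in degrees $\pm1$; so the degree-$(\pm1)$ part of the identity relates $\alpha_0$, the image of $s^*\widetilde\alpha_0$ under $\phi$, and the gauge contribution of $dh$. Because $\phi$ is not vertically equivariant one must commute the vertical operator past $\phi$ using the transport structure of the flow; after placing $h$ in solenoidal gauge, injectivity/regularity of the transport operator on low Fourier modes — available precisely because the flows are Anosov — eliminates the gauge term and leaves $\alpha_0$ equal, through a diffeomorphism $\psi$ of $M$ recovered from $\phi$ in the spirit of \cite{guillarmou25}, to an $M$-pointwise multiple of $\psi^*\widetilde\alpha_0$. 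Since $\alpha_0$ determines $g$ and $s^*\widetilde\alpha_0$ is an $M$-pointwise multiple of $\alpha_0$ exactly when the two metrics are conformal, and since $\phi$ isotopic to the identity forces $\psi$ isotopic to the identity, this yields $\psi^*\tilde g=e^{2f}g$; the surviving degree-$0$ part of the identity is an equality relating $\eta_\lambda$ and $\eta_{\tilde\lambda}$, equivalently $\lambda$ and $\tilde\lambda$ (in particular the fluxes up to the constant $a$), which is the asserted relationship between the thermostat forms. For the last statement, when $c\equiv1$ and $f=0$ the map $\psi$ is an isometry isotopic to the identity, $a=1$, and the displayed identity collapses to one relating $\eta_\lambda$ and $\eta_{\tilde\lambda}$ through the lift of $\psi$; since $d\eta_\lambda=\pi^*(\lambda\mu_a)$ vanishes iff $\lambda=0$, this exactness is preserved under $\psi$, giving $\lambda=0\iff\tilde\lambda=0$.

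I expect the main obstacle to be the second step: showing that the degree-$(\pm1)$ equation forces conformality rather than a weaker relation, which requires injectivity/regularity of the transport operator on low Fourier modes (this is where the Anosov, i.e.\ negative-curvature-type, hypothesis is essential), the rigidity of the canonical contact structure on $SM$, and a careful construction of the base diffeomorphism $\psi$ together with a proof that it is isotopic to the identity. A secondary difficulty is establishing that every Anosov magnetic flow on a surface is of contact type, i.e.\ that $\alpha_\lambda(F)>0$; should this fail, one would argue instead with the stable Hamiltonian structure it always carries, at the cost of extra bookkeeping in the comparison above.
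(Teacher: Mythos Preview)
Your first step---writing a primitive of $\iota_F\mu$ and comparing under $\phi^*$---is sound and indeed parallels the paper's set-up for the final clause (Proposition~\ref{proposition:magnetic-katok}). The genuine gap is the second step. After pulling to $SM$, the left-hand side of your identity is $(\phi\circ s)^*\alpha_\lambda$ for a diffeomorphism $\phi\circ s:SM\to SM$ that does \emph{not} respect the vertical fibration, so its degree-$(\pm1)$ Fourier part bears no controlled relation to $\alpha_0$ or to any base pullback: $\phi$ scrambles all modes. The phrases ``commute the vertical operator past $\phi$ using the transport structure'' and ``a diffeomorphism $\psi$ of $M$ recovered from $\phi$ in the spirit of \cite{guillarmou25}'' are placeholders, not arguments; \cite{guillarmou25} produces $\psi$ via Torelli's theorem, not by descending $\phi$, and no tensor-tomography or Liv\v{s}ic input manufactures a base diffeomorphism from an identity of $1$-forms pulled back by a fibre-scrambling map. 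The same issue undermines your sketch of the last clause: the displayed identity still involves $\phi$, not the lift of $\psi$, so nothing forces $d\eta_{\tilde\lambda}=0$ from $\lambda=0$.

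The paper's route is entirely different and never descends $\phi$ to $M$. Via the map $\Phi$ on invariant $2$-currents and wavefront-set control of fibrewise holomorphic invariant distributions (Proposition~\ref{proposition:action-1}, using tensor tomography of order~$1$), it shows that $\phi$ induces a period-preserving isomorphism $H^0_J(M,\kappa)\to H^0_{\widetilde J}(M,\kappa)$ (Proposition~\ref{proposition-last-isomorphism}). Torelli's theorem then yields $\psi$, and the finite-cover trick (Lemma~\ref{lemma-finite-cover}) forces $\psi$ isotopic to the identity. For the last clause, the paper contracts the analogue of your identity with $\widetilde F$, integrates against $\mu$ to get $c^2=\tilde c^2$, and when $\lambda=0$ integrates over closed orbits and invokes the smooth Liv\v{s}ic theorem together with injectivity of the magnetic X-ray transform on $1$-forms to force $\tilde\varrho+\varphi$ exact, whence $d\tilde\varrho=0$ and $\tilde\lambda=0$.
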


We note that finding a relationship between $\lambda$ and $\tilde{\lambda}$ in the general magnetic case remains an open question. A key similarity between geodesic and magnetic flows is that they preserve the Liouville measure on $SM$. As we will explain, this allows most of the key arguments from the paper  \cite{guillarmou25} to also go through in the magnetic case.

For this reason, the main emphasis of this paper is instead on Gaussian thermostats. These correspond to the case where $\lambda=\lambda_{-1}+\lambda_1$ or, equivalently, $\lambda=\ell_1\theta$ for some 1-form $\theta$ on $M$, where
\begin{equation}\label{eq:defn-ell_1}
(\ell_1\theta)(x,v)\coloneqq\theta_x(v)
\end{equation}
denotes the restriction to $SM$ of smooth differential forms (so that we may see them as functions on $SM$). We will denote a Gaussian thermostat $(M, g, \lambda)$ by $(M, g, \theta)$ to highlight its particular form. 

One can also study Gaussian thermostats using  an \textit{external vector field} $E$. This is the vector field on $M$ uniquely characterized by $\theta_x(v)=g_x(E(x),Jv)$, that is, the vector field dual to $\star \theta$, where $\star$ is the Hodge star operator of the metric $g$.

As we allow $\lambda$ to have Fourier degree 1, we introduce the possibility of new dynamical features absent from the geodesic and magnetic cases. For instance, by \cite[Theorem A]{dairbekov07}, a Gaussian thermostat preserves an absolutely continuous invariant measure on $SM$ if and only if $\star\theta$ is exact. This means in particular that the Liouville measure may no longer be preserved and it allows for fractal Sinai--Ruelle--Bowen measures.

The \textit{thermostat curvature} of $(M, g, \theta)$ is the quantity
\begin{equation}\label{eq:thermostat-curvature}
\mathbb{K}= \pi^*(K_g -\div_{\mu_a} E),
\end{equation}
where $K_g$ is the Gaussian curvature of $(M, g)$. If $\mathbb{K}<0$, then the flow is Anosov by \cite[Theorem 5.2]{wojtkowski00b}, in analogy with the geodesic case. Note that equation \eqref{eq:thermostat-curvature} is a particular case of the more general definition
\begin{equation}\label{eq:general-thermostat-curvature}
\mathbb{K}\coloneqq\pi^*K_g -H\lambda+\lambda^2+FV\lambda
\end{equation}
used for any thermostat $(M, g, \lambda)$.

This leads us to our next main result.
\begin{theorem}\label{theorem:main-theorem}  %Moreover, if the orbit equivalence is a conjugacy and $\theta=0$, then $f=0$.
Let $(M, g, \theta)$ and $(M, \tilde{g}, \tilde{\theta})$ be two Gaussian thermostats with $\mathbb{K},\widetilde{\mathbb{K}}<0$. If there is a smooth orbit equivalence isotopic to the identity between them, then there is a smooth diffeomorphism $\psi : M\to M$ isotopic to the identity such that $\psi^*\tilde{g}=e^{2f}g$ for some $f\in \mathcal{C}^\infty(M, \R)$. Moreover, if $\star\theta$ or $\tilde{\star}\tilde{\theta}$ is closed, then $\star (\psi^*\tilde{\theta}-\theta)$ is exact.
\end{theorem}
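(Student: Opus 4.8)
The plan rests on one structural observation: among Gaussian thermostats, a conformal change of the background metric is, up to smooth orbit equivalence and time reparametrization, only a co-exact modification of the thermostat form. Concretely, rewriting the geodesic spray of $e^{2f}g$ on $SM$ through the scaling map \eqref{eq:scaling-map} --- using $V(f\circ\pi)=0$ and that $H(f\circ\pi)$ equals $\ell_1(\star df)$ up to sign --- one checks that the flow of $(M,e^{2f}g,\eta)$ is smoothly orbit equivalent, via a map isotopic to the identity, to the flow of $(M,g,\eta-\star df)$, with time change $e^{-f\circ\pi}$. Since $\star(\eta-\star df)=\star\eta+df$, this is precisely the gauge freedom in the statement, and the theorem asserts there is no more. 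Accordingly I would split the proof into: (a) produce a conformal equivalence $\psi$; (b) use the identity above to reduce to two thermostats over the \emph{same} Riemannian surface $(M,g)$ whose flows are smoothly orbit equivalent via a map isotopic to the identity; and (c) compare the two forms over $(M,g)$ modulo co-exact forms.

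For step (a), the smooth orbit equivalence $\phi$ transports the weak stable and weak unstable foliations of the $\tilde\theta$-flow to those of the $\theta$-flow; being isotopic to the identity, it lifts $\pi_1(M)$-equivariantly to the universal covers and matches the unique closed thermostat geodesics in each non-trivial free homotopy class (via \cite[Theorem A]{ghys84}). From here I would follow the strategy behind Theorem \ref{theorem:second-theorem} and \cite{guillarmou25} and reconstruct the conformal class of $g$ from the Anosov data transported by $\phi$ --- working with the Bowen--Margulis/SRB data of the thermostat, which persists even when the Liouville measure is not invariant --- obtaining $\psi$ with $\psi^*\tilde g=e^{2f}g$, and isotopic to the identity because $\phi$ is. Replacing $(\tilde g,\tilde\theta)$ by $(\psi^*\tilde g,\psi^*\tilde\theta)=(e^{2f}g,\psi^*\tilde\theta)$ and applying the absorption identity reduces us, as in step (b), to a smooth orbit equivalence isotopic to the identity between the flows of $(M,g,\theta)$ and $(M,g,\vartheta)$ with $\vartheta=\psi^*\tilde\theta-\star df$. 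Co-closedness of a $1$-form on a surface is conformally invariant and is preserved by adding co-exact forms, so the hypothesis ``$\star\theta$ or $\tilde\star\tilde\theta$ is closed'' becomes ``$\star\theta$ or $\star\vartheta$ is closed'', and it suffices to prove that $\vartheta-\theta$ is co-exact.

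For step (c), the relevant quantity attached to a closed orbit $\gamma_\star$ in free homotopy class $\mathfrak c$ is $\int_{\gamma_\star}\ell_1(\star\theta)=\int_{\pi(\gamma_\star)}\star\theta$. A computation with the Liouville volume gives $\div F=-\ell_1(\star\theta)$ on $SM$, so this integral is the logarithm of the determinant of the Poincaré return map along $\gamma_\star$ --- exactly the mechanism by which \cite{dairbekov07} detects an absolutely continuous invariant measure. Now a \emph{smooth} orbit equivalence conjugates the Poincaré return maps of corresponding closed orbits (the time change is transversally irrelevant), so, matching orbits over the same class as in step (a), we obtain $\int_{\pi(\gamma^{\vartheta}_{\mathfrak c})}\star\vartheta=\int_{\pi(\gamma^{\theta}_{\mathfrak c})}\star\theta$ for every $\mathfrak c$. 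The delicate remaining point is to show that $\star\vartheta$ is closed whenever $\star\theta$ is: by Liv\v{s}ic theory $\ell_1(\star\vartheta)$ is $F_\vartheta$-cohomologous to the time-change-weighted pullback of $\ell_1(\star\theta)$, and a regularity result for the transport equation on $SM$ --- in the spirit of the injectivity results for thermostat X-ray transforms --- then forces the exact part of $\vartheta$ to vanish. Granting this, $\star\vartheta$ and $\star\theta$ are closed with equal periods over every free homotopy class; since closed orbits span $H_1(M;\R)$ we get $[\star\vartheta]=[\star\theta]$, hence $\star(\vartheta-\theta)$ is exact, and unwinding step (b) gives $\star(\psi^*\tilde\theta-\theta)$ exact.

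The main obstacle is step (a): obtaining the conformal equivalence with the right regularity and in the right isotopy class. Two points need care. First, the smooth orbit equivalence must be pushed down to sufficiently regular boundary/conformal data, and the time-change cocycle $c$ disposed of via Liv\v{s}ic-type cohomology --- effectively reducing the orbit equivalence to a conjugacy modulo the conformal-absorption freedom. Second, and this is where the Gaussian case genuinely departs from the magnetic case of Theorem \ref{theorem:second-theorem}, every appeal in \cite{guillarmou25} to the invariance of the Liouville measure must be replaced by the corresponding statement for the Bowen--Margulis or SRB measure of the thermostat; this is also why the conclusion is conformal equivalence rather than isometry, and why the closedness hypothesis is needed for the second assertion.
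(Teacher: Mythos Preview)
Your structural decomposition into steps (a)--(c) matches the paper's, and your step (b) is exactly Lemma \ref{lemma:rescaling}. However, step (a) as written is not a proof, and step (c) takes an avoidable detour.

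For step (a), you say you would ``follow the strategy behind Theorem \ref{theorem:second-theorem} and \cite{guillarmou25}'' and then propose replacing the Liouville measure by ``Bowen--Margulis/SRB data''. This is the wrong fix. The paper's argument never touches SRB or Bowen--Margulis measures. What breaks when the flow is not volume-preserving is that $\phi^*$ no longer maps flow-invariant distributions to flow-invariant distributions; the repair is to pass through the space $\mathcal{F}(SM)$ of closed, $F$-horizontal $2$-currents (Lemma \ref{lemma:current-isomorphism}) and to work with solutions of the \emph{adjoint} transport equation $(F+V\lambda)u=0$. The rest of the machinery --- the wavefront set argument that $\Phi$ preserves fibrewise holomorphic solutions (Proposition \ref{proposition:action-1}), the extension of holomorphic $1$-forms to such solutions (Theorem \ref{theorem:extension-holomorphic}), the pairing formula (Lemma \ref{lemma:pairing-two}), Torelli's theorem, and the finite-cover trick (Lemma \ref{lemma-finite-cover}) --- is entirely absent from your sketch. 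Crucially, you never say where the hypothesis $\mathbb{K}<0$ enters: in the paper it is used \emph{only} to establish the attenuated tensor tomography of order $1$ (Theorem \ref{theorem:thermostat-tensor-tomography}), which in turn is needed to show that $\Phi$ preserves fibrewise holomorphicity. Your proposal gives no indication of how ``Anosov data'' or ``SRB data'' would recover the conformal class.

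For step (c), your divergence/Poincar\'e-determinant observation is correct and is essentially the content of Lemma \ref{lemma:relation-divergences}, but you then introduce an unnecessary and unproved claim: that $\star\vartheta$ must be closed if $\star\theta$ is. The paper avoids this entirely. Assuming $\star\theta$ is closed, its integral over $\pi(\gamma^\vartheta_{\mathfrak c})$ already equals its integral over $\pi(\gamma^\theta_{\mathfrak c})$ since the two curves are freely homotopic; combined with your determinant identity this gives $\int_{\gamma^\vartheta_{\mathfrak c}}\tilde\ell_1(\star\theta-\star\vartheta)=0$ for every closed $\vartheta$-orbit. One then invokes the smooth Liv\v{s}ic theorem together with the injectivity of the thermostat X-ray transform on $1$-forms \cite[Theorem B]{dairbekov07} to conclude that $\star\theta-\star\vartheta$ is exact --- no need to first prove $\star\vartheta$ closed.
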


As shown in Lemma \ref{lemma:rescaling}, the scaling map defined in equation \eqref{eq:scaling-map} yields a smooth orbit equivalence isotopic to the identity between the Gaussian thermostats
$(M, g, \theta)$ and $(M, e^{2f}g, \theta+\star df)$, with a time-change by $e^f$. This implies that the conformal factor in our main result is optimal and that it is necessary to leave room for an exact difference when relating the $1$-forms. However, it is unclear at this stage whether the closedness condition is really necessary to establish this last relationship.

Ideally, one would like to extend this result to the general Anosov case. The only place where we use the negative thermostat curvature is in showing that the Gaussian thermostats satisfy the attenuated tensor tomography problem of order $1$ (see \S\ref{subsection:tensor-tomography}). We do not have this issue in the purely magnetic case, which is why we were able to simply assume the more general Anosov property in Theorem \ref{theorem:second-theorem}. Removing the negative thermostat curvature assumption should also allow one to mix the magnetic case with Gaussian thermostats, that is, to take $\lambda=\lambda_{-1}+\lambda_0+\lambda_1$. 

As pointed out above, there are still open questions regarding the rigidity of $\lambda$ for $\lambda$ of Fourier degree $1$. It is also unclear at this stage how much information is gained from having a genuine conjugacy versus an orbit equivalence, and whether the conjugating  diffeomorphism  $\phi$ itself must have some particular form as in the purely geodesic case (see \cite[Corollary 1.3]{guillarmou25}).

After this work, a natural question is whether anything can be said for $\lambda$ of Fourier degree $\geq 2$. As we show with the no-go Lemma \ref{lemma:no-go-lemma}, the current argument does not work for these thermostats. However, there are interesting examples of such systems. For instance, when $\lambda$ is the real part of a holomorphic differential of degree $\geq 2$, the corresponding thermostat admits an interpretation as coupled vortex equations (see \cite{mettler19, mettler22}). It was also shown in \cite{mettler20} that the geodesic flow of an affine connection on $M$ is, up to a time-change, the flow of a thermostat with $\lambda$ of the form $\lambda=\lambda_{-3}+\lambda_{-1}+\lambda_1+\lambda_3$. Just as Theorem \ref{theorem:second-theorem} implies that an Anosov magnetic system $(M, g, \lambda)$ with $\lambda\neq 0$ cannot be smoothly conjugate to an Anosov geodesic flow $(M, g, 0)$ with the same metric by a conjugacy isotopic to the identity, it would be interesting to further categorize thermostats. 

Finally, we note that Theorem \ref{theorem:extension-forms}, which applies to thermostats, was placed in the appendix to improve the overall exposition of the paper, but it represents a new result related to the injectivity of the thermostat X-ray transform. 

\subsection{Strategy} Our main inspiration is the approach in \cite{guillarmou25}. Indeed, we show that a smooth orbit equivalence isotopic to the identity determines the complex structure of the metric $g$ up to biholomorphisms isotopic to the identity (Proposition \ref{proposition:same-teichmuller-class}). This allows us to conclude that the two metrics $g$ and $\tilde{g}$ must be conformally equivalent via a smooth diffeomorphism of $M$ isotopic to the identity. 

To show that the orbit equivalence determines the complex structure, we rely on Torelli's theorem (Theorem \ref{theorem:torelli}), which tells us that it is enough to show that the period matrix of the underlying Riemann surface is preserved. To be able to conclude that the resulting diffeomorphism is isotopic to the identity, we use the fact that the argument can be repeated on any finite cover. 

The period matrix is defined in terms of holomorphic $1$-forms on $M$. We show with Theorem \ref{theorem:extension-holomorphic} that these can always be associated to the first Fourier modes of certain distributions $\mathcal{D}_{\text{tr},+}(SM)$ on $SM$ satisfying a transport equation and with non-negative Fourier modes (see \S\ref{subsubsection:fibrewise-holomorphic-distributions}). Asking for these distributions to only have  non-negative Fourier modes is a critical requirement for the rest of the argument, but it does not carry over to the case of thermostats when $\lambda$ has Fourier degree $\geq 2$.

We then establish in Lemma \ref{lemma:pairing-two} a pairing formula showing that the integral of any holomorphic $1$-form over a thermostat geodesic $\gamma$ on $M$ (that is, the periods of the period matrix) is the same as the integral over $\pi^{-1}(\gamma)\subset SM$ of an associated $2$-current  invariant by $F$ and living in a certain subspace $\mathcal{F}(SM)$ (see \S\ref{subsubsection:divergence}). This pairing formula then tells us that the smooth orbit equivalence preserves the period matrix.

At a high level, there are two main challenges and departures from \cite{guillarmou25}: the first is in handling a general orbit equivalence instead of a conjugacy, and the second is in dealing with the fact that Gaussian thermostats may not be volume-preserving.

The presence of a non-zero divergence with respect to the Liouville volume form manifests in several ways. First, instead of flow-invariant distributions, the right object of study becomes solutions to the dual transport equation. This subspace is no longer preserved by the pullback of the orbit equivalence, so we have to introduce the space $\mathcal{F}(SM)$ of  $2$-currents mentioned above and establish a one-to-one relationship with the distributions solving the transport equation (Lemma \ref{lemma:current-isomorphism}). We then have to check that the wavefront set analysis is unaffected by factoring the correspondence through this space (Lemma \ref{lemma:wavefrontset-pullback}) and that $\mathcal{D}_{\text{tr},+}(SM)$ is mapped to  $\mathcal{D}_{\text{tr},+}(\widetilde{S}M)$ (Proposition \ref{proposition:action-1}). 
 
 Another complication due to the dissipation is in showing that any holomorphic 1-form can be seen as the first Fourier mode of an element in $\mathcal{D}_{\text{tr}, +}(SM)$, as previously mentioned. The heavy lifting to address this issue is done in Appendix \ref{appendix:A}. Furthermore, again due to the divergence, we have to explain why Gaussian thermostats with negative thermostat curvature satisfy the attenuated tensor tomography problem of order 1 (Theorem \ref{theorem:thermostat-tensor-tomography}).
 
For the pairing formula previously described, we have replaced the role of the Liouville form with that of a certain form defined in equation  \eqref{eq:definition-special-form}. Finally, to relate $\lambda$ with $\tilde{\lambda}$, we rely on new arguments which, at their core, involve the smooth Livšic theorem.

\subsection{Organization of the paper} 
In  \S\ref{section:preliminaries}, we introduce the background tools necessary for the rest of the paper. Specifically, \S\ref{subsection:geometry-of-sm} provides a short introduction to the geometry and vertical Fourier analysis of the unit tangent bundle. It also introduces the new objects needed to deal with the divergence of the thermostats. In \S\ref{subsection:complex-geometry}, we review the complex geometry and harmonic analysis on a surface, while \S\ref{subsection:hyperbolic-dynamics} delves into hyperbolic dynamics and tensor tomography. 

In \S\ref{section:action-holomorphic-differentials}, we explain how a smooth orbit equivalence acts on holomorphic differentials and we establish the pairing formula needed to show that period matrices are preserved. We then present the proofs of our main results in \S\ref{section:end-of-the-proofs}. 

Appendix \ref{appendix:A} delves into the question of finding distributional solutions, with prescribed Fourier modes, of the relevant transport equation for a thermostat.

\subsection*{Acknowledgements} I would like to thank my supervisor, Gabriel Paternain, for suggesting this project and guiding me while working on it. This research was supported by a Harding Distinguished Postgraduate Scholarship.

\section{Preliminaries}\label{section:preliminaries}
In what follows, $(M,g)$ is a closed oriented Riemannian surface and we take an arbitrary $\lambda\in \mathcal{C}^\infty(SM, \R)$. Whenever we use additional assumptions, it will be clearly stated in the result statements. We will sometimes need a second thermostat $(M, \tilde{g}, \tilde{\lambda})$. All the objects depending on the metric will then be labelled accordingly. Finally, we denote by $\phi : \widetilde{S}M\to SM$ a smooth orbit equivalence between the thermostats $(M, \tilde{g}, \tilde{\lambda})$ and $(M, g, \lambda)$. Once again, we will specify when we assume it to be isotopic to the identity.

\subsection{Unit tangent bundle of the surface}\label{subsection:geometry-of-sm}
We review some basics concerning the unit tangent bundle of $M$, that is, the $3$-dimensional manifold defined as
$$SM\coloneqq\left\{(x,v)\in TM\mid \Vert v\Vert_g = 1\right\},$$
together with its natural projection $\pi: SM\to M$.
\subsubsection{Geometry of $SM$}
As previously, let $X$ be the geodesic vector field on $SM$ and let $V$ be the vertical vector field generating the circle action on the fibres. We define $H\coloneqq[V,X]$. Then, $(X, H, V)$ is a positively oriented global frame for $T(SM)$, orthonormal with respect to the \textit{Sasaki metric} (the natural lift of $g$ to $SM$). We set $\mathbb{H}\coloneqq\R H$ and $\mathbb{V}\coloneqq\R V$. We also note that the geodesic vector field splits into $X=\eta_+ +\eta_-$, where $\eta_\pm$ are the \textit{raising} and \textit{lowering Guillemin--Kazhdan operators} given by
\begin{equation}\label{eq:ladder-operators}
\eta_\pm \coloneqq \dfrac{1}{2}(X\mp i H).
\end{equation}

The \textit{Liouville} $1$-form $\alpha\in \mathcal{C}^\infty(SM, T^*(SM))$ is defined by the relations $\alpha(X)=1$ and $\alpha(H)=\alpha(V)=0$. It is invariant by the geodesic flow in the sense that $\mathcal{L}_X \alpha=0$. The $2$-form $d\alpha$ is non-degenerate on the \textit{contact plane} $\mathbb{H}\oplus \mathbb{V}$ and it satisfies $\iota_X d\alpha=0$. Hence,
$$\mu: = -\alpha\wedge d\alpha$$
is a nowhere-vanishing volume form invariant by the geodesic flow. We call it the \textit{Liouville} volume form (and also use $\mu$ to denote the induced Liouville measure on $SM$). It coincides with the Riemannian volume form induced by the Sasaki metric on $SM$. From now on, the $L^2$ space on $SM$ is defined as $L^2(SM)\coloneqq L^2(SM, \mu)$. 

We also define the $1$-forms $\beta, \psi $ on $SM$ by the relations $\beta(H)=1=\psi(V)$ and $\beta(X)=\beta(V)=0=\psi(X)=\psi(H)$.  It is easy to check that $d\alpha = \psi \wedge\beta$ so that $\mu = \alpha\wedge \beta \wedge\psi$. We set $(\R X)^\ast\coloneqq\R\alpha$, $\mathbb{H}^\ast \coloneqq \R\beta$ and $\mathbb{V}^\ast \coloneqq\R\psi$. 
We refer to \cite[\S 3.5]{paternain23b}  for further details on the geometric structure of $SM$. 

\subsubsection{Appearance of the divergence}
The key difference between thermostats and geodesic or magnetic flows is that the generating vector field $F$ might not preserve the Liouville volume form $\mu$. Recall that the \textit{divergence} of the vector field $F$ with respect to the volume form $\mu$ is the function $\div_\mu F\in \mathcal{C}^\infty(SM, \R)$ uniquely characterized by 
$$\mathcal{L}_F \mu = (\div_\mu F)\mu.$$
 The following result is proved in \cite[Lemma 3.2]{dairbekov07}. 
\begin{lemma}\label{lemma:divergence}
Let $(M, g, \lambda)$ be a thermostat. Then, we have
$$\mathcal{L}_F\mu = V(\lambda)\mu, \qquad \mathcal{L}_H\mu = 0, \qquad \mathcal{L}_V \mu=0.$$
\end{lemma}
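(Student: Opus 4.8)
The plan is to run everything through Cartan's magic formula. Since $\mu$ is a top-degree form on the $3$-manifold $SM$, we have $d\mu=0$, so for any vector field $Y$ on $SM$ the Lie derivative simplifies to $\mathcal{L}_Y\mu=d(\iota_Y\mu)$. I would first record, from $\mu=\alpha\wedge\beta\wedge\psi$ together with the duality relations between the frame $(X,H,V)$ and the coframe $(\alpha,\beta,\psi)$, the contractions
\[
\iota_X\mu=\beta\wedge\psi,\qquad \iota_H\mu=-\alpha\wedge\psi,\qquad \iota_V\mu=\alpha\wedge\beta.
\]

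Next, I would invoke the structure equations on $SM$ (see \cite[\S3.5]{paternain23b}): $d\alpha=\psi\wedge\beta$ (already noted above), $d\beta=-\psi\wedge\alpha$, and $d\psi=-(\pi^*K_g)\,\alpha\wedge\beta$; the sign of the last equation is irrelevant for what follows. Feeding these into the contractions above and using that any wedge product with a repeated factor vanishes, one immediately obtains
\begin{gather*}
\mathcal{L}_V\mu=d(\alpha\wedge\beta)=d\alpha\wedge\beta-\alpha\wedge d\beta=0,\\
\mathcal{L}_H\mu=-d(\alpha\wedge\psi)=-d\alpha\wedge\psi+\alpha\wedge d\psi=0,\\
\mathcal{L}_X\mu=d(\beta\wedge\psi)=d\beta\wedge\psi-\beta\wedge d\psi=0,
\end{gather*}
the last identity recovering the invariance of $\mu$ under the geodesic flow noted earlier. (Alternatively, $\mathcal{L}_V\mu=0$ requires no computation at all: $V$ generates the fibrewise circle action, which acts by isometries of the Sasaki metric and hence preserves its volume form $\mu$.)

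Finally, writing $F=X+\lambda V$ and using additivity of $\mathcal{L}_{(\cdot)}\mu$ in the vector field, together with the identity $\mathcal{L}_{fY}\omega=f\,\mathcal{L}_Y\omega+df\wedge\iota_Y\omega$, I would compute
\[
\mathcal{L}_F\mu=\mathcal{L}_X\mu+\lambda\,\mathcal{L}_V\mu+d\lambda\wedge\iota_V\mu=d\lambda\wedge(\alpha\wedge\beta).
\]
Expanding $d\lambda=(X\lambda)\,\alpha+(H\lambda)\,\beta+(V\lambda)\,\psi$ in the coframe, only the $\psi$-term survives the wedge with $\alpha\wedge\beta$, and since $\psi\wedge\alpha\wedge\beta=\alpha\wedge\beta\wedge\psi=\mu$ this gives $\mathcal{L}_F\mu=V(\lambda)\mu$, as claimed. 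There is no genuine obstacle here; the only point requiring care is to fix the orientation, the sign of the complex structure $J$, and the structure equations consistently with the conventions used in the rest of the paper — and the three vanishing statements above are in any case insensitive to these choices.
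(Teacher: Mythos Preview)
Your proof is correct: the computation via Cartan's formula and the structure equations on $SM$ is clean, and each step checks out (the contractions, the three vanishing Lie derivatives, and the final $d\lambda\wedge\alpha\wedge\beta=(V\lambda)\mu$). Note that the paper does not actually give its own proof of this lemma; it simply cites \cite[Lemma 3.2]{dairbekov07}, so your self-contained argument fills in what the paper outsources, and is essentially the standard computation one would expect to find in that reference.
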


In the geodesic and magnetic cases, we have $V\lambda=0$, so the Liouville volume form is preserved. Another way in which the divergence manifests itself is when calculating the adjoint operators with respect to the $L^2$ inner product on $SM$:
$$F^*=-(F+V\lambda),\qquad H^*=-H, \qquad V^*=-V.$$
This is relevant when extending differential operators to act on the space of distributions. Recall that any differential operator $P$ with smooth real-valued coefficients acts on a distribution $u\in \mathcal{D}'(SM)$ by duality, that is, $\langle P u , \varphi\rangle_{\mathcal{D}'(SM)}\coloneqq\langle u, P^* \varphi \rangle_{\mathcal{D}'(SM)}$ for any $\varphi\in \mathcal{C}^\infty(SM)$. The subspace of distributional solutions to the transport equation
\begin{equation*}
\mathcal{D}'_{\text{tr}}(SM)  \coloneqq\left\{u\in \mathcal{D}'(SM) \mid (F+V\lambda)u=0\right\}
\end{equation*}
thus corresponds to the distributions $u\in \mathcal{D}'(SM)$ such that $\langle u , F\varphi\rangle_{\mathcal{D}'(SM)}=0$ for all $\varphi\in \mathcal{C}^\infty(SM)$. If $V\lambda=0$, these are simply the distributions invariant by the flow. 

\subsubsection{Divergence and smooth orbit equivalences}\label{subsubsection:divergence}

It is crucial to understand how the divergence of a system interacts with smooth orbit equivalences. 

The next result, which we have stated in a broader setting than the one we are studying in this paper to highlight its generality, relates the divergences of two flows associated by a smooth orbit equivalence. 
\begin{lemma}\label{lemma:relation-divergences}
Let $N$ and $\widetilde{N}$ be two oriented manifolds endowed with nowhere-vanishing volume forms $\mu$ and $\tilde{\mu}$, and smooth nowhere-vanishing vector fields $Y$ and $\widetilde{Y}$. Suppose that $\phi : \widetilde{N}\to N$ is a smooth orbit equivalence between the flows generated by $\widetilde{Y}$ and $Y$. If we write $\phi_*\widetilde{Y}=cY$ with $c \in \mathcal{C}^\infty(N, \R_{>0})$ and $\phi^*\mu = (\det \phi )\tilde{\mu}$ with $\det \phi\in \mathcal{C}^\infty(\widetilde{N}, \R)$, then $$(\det \phi)\phi^*(\div_\mu Y)=\widetilde{Y}\left(\dfrac{\det \phi}{\phi^*c}\right)+\dfrac{\det \phi}{\phi^*c}\div_{\tilde{\mu}}\widetilde{Y}.$$
In particular, if $\phi$ preserves the orientation, that is, $\det \phi >0$, then  $$\phi^*(c)\phi^*(\div_\mu Y)=\widetilde{Y}\left(\ln \left(\dfrac{\det \phi}{\phi^*c}\right)\right)+\div_{\tilde{\mu}} \widetilde{Y}.$$
\end{lemma}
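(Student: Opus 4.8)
The plan is to compute the Lie derivative $\mathcal{L}_{\widetilde{Y}}\tilde{\mu}$ in two different ways and compare. The starting point is the defining relation $\phi^*\mu = (\det\phi)\,\tilde{\mu}$. Applying the Lie derivative along $\widetilde{Y}$ to both sides, and using that $\phi^*$ intertwines $\mathcal{L}_{\widetilde{Y}}$ with $\mathcal{L}_{\phi_*\widetilde{Y}} = \mathcal{L}_{cY}$ (this is just naturality of the Lie derivative under the diffeomorphism $\phi$), the left-hand side becomes $\mathcal{L}_{\widetilde{Y}}(\phi^*\mu) = \phi^*(\mathcal{L}_{cY}\mu)$. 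Now I would expand $\mathcal{L}_{cY}\mu$ using the Leibniz-type identity $\mathcal{L}_{cY}\mu = c\,\mathcal{L}_Y\mu + dc\wedge \iota_Y\mu = \big(Y(c) + c\,\div_\mu Y\big)\mu$ — here I am using that wedging $dc$ with $\iota_Y\mu$ and comparing with $\mu$ gives $Y(c)\mu$ on a manifold of dimension $\dim N$, which is the standard formula for the divergence of a scaled vector field. Pulling back, the left-hand side is $\phi^*\!\big(Y(c) + c\,\div_\mu Y\big)\,(\det\phi)\,\tilde{\mu} = \big(\phi^*c\cdot \widetilde{Y}(\text{stuff}) + \dots\big)$; more precisely $\phi^*(Y(c)) = \widetilde{Y}(\phi^*(c)/ ?)$ needs care, so I would instead keep things in the cleaner form below.

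The cleaner route: on the right-hand side, $\mathcal{L}_{\widetilde{Y}}\big((\det\phi)\tilde{\mu}\big) = \widetilde{Y}(\det\phi)\,\tilde{\mu} + (\det\phi)\,(\div_{\tilde{\mu}}\widetilde{Y})\,\tilde{\mu}$. On the left-hand side, $\phi^*(\mathcal{L}_{cY}\mu) = \phi^*\!\big((\div_\mu(cY))\mu\big) = \phi^*(\div_\mu(cY))\cdot(\det\phi)\,\tilde{\mu}$, and $\div_\mu(cY) = Y(c) + c\,\div_\mu Y$. Equating the coefficients of $\tilde{\mu}$ gives
\begin{equation*}
(\det\phi)\,\phi^*\!\big(Y(c)\big) + (\det\phi)\,\phi^*(c)\,\phi^*(\div_\mu Y) = \widetilde{Y}(\det\phi) + (\det\phi)\,\div_{\tilde{\mu}}\widetilde{Y}.
\end{equation*}
Using $\phi^*(Y(c)) = \tfrac{1}{\phi^*c}\,\widetilde{Y}(\phi^*c)$ — which follows from $\phi_*\widetilde{Y} = cY$, since $(Yc)\circ\phi = (\phi_*\widetilde{Y})(c)\circ\phi /(c\circ\phi) \cdot (c\circ\phi)/1$... rather, directly: $\widetilde{Y}(\phi^*c) = (\phi^*)(\phi_*\widetilde{Y})(c) = \phi^*(cY(c)) = \phi^*(c)\,\phi^*(Y(c))$ — I can substitute to get $(\det\phi)\widetilde{Y}(\phi^*c)/\phi^*c$ for the first term. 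Then I rearrange: moving everything appropriately and recognizing $\widetilde{Y}(\det\phi) - (\det\phi)\widetilde{Y}(\phi^*c)/\phi^*c = (\phi^*c)\,\widetilde{Y}\!\big(\det\phi/\phi^*c\big)$ by the quotient rule, I obtain exactly
\begin{equation*}
(\det\phi)\,\phi^*(c)\,\phi^*(\div_\mu Y) = (\phi^*c)\,\widetilde{Y}\!\left(\frac{\det\phi}{\phi^*c}\right) + (\det\phi)\,\div_{\tilde{\mu}}\widetilde{Y},
\end{equation*}
and dividing through by $\phi^*c > 0$ yields the first displayed identity in the statement.

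For the second, ``in particular'' claim, when $\det\phi > 0$ I divide the first identity by $\det\phi/\phi^*c$ (legitimate since this is a positive smooth function) and use that $\widetilde{Y}(h)/h = \widetilde{Y}(\ln h)$ for $h = \det\phi/\phi^*c > 0$, giving the stated logarithmic form. The main obstacle is purely bookkeeping: keeping straight which functions live on $N$ versus $\widetilde{N}$, correctly commuting $\phi^*$ past $\widetilde{Y}$ via the orbit-equivalence relation $\phi_*\widetilde{Y} = cY$ (the factor $c$ is exactly what makes this non-trivial compared to a conjugacy), and assembling the quotient-rule rearrangement at the end. There is no analytic difficulty — everything is a computation with smooth objects on finite-dimensional manifolds — so I would present it compactly, flagging only the two identities $\widetilde{Y}(\phi^*c) = \phi^*(c)\,\phi^*(Y(c))$ and $\mathcal{L}_{cY}\mu = \div_\mu(cY)\,\mu$ as the substantive inputs.
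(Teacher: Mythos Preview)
Your proof is correct and follows essentially the same approach as the paper: both compute $\phi^*(\mathcal{L}_{Y}\mu)$ (equivalently $\mathcal{L}_{\widetilde{Y}}(\phi^*\mu)$ via naturality) in two ways and equate the coefficients of $\tilde{\mu}$. The only cosmetic difference is that the paper pulls back $\mathcal{L}_Y\mu = d(\iota_Y\mu)$ directly using Cartan's formula and $\phi^{-1}_*Y = \widetilde{Y}/\phi^*c$, so the quotient $\det\phi/\phi^*c$ appears immediately without your quotient-rule rearrangement at the end.
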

\begin{proof}
We compute 
\begin{equation*}
\begin{alignedat}{1}
\phi^*(\mathcal{L}_Y \mu)&=\phi^*(d(\iota_Y \mu))\\
&=d(\iota_{\phi^{-1}_* Y} \phi^*(\mu))\\
&=d\left(\dfrac{\det \phi}{\phi^*c}\iota_{\widetilde{Y}} \tilde{\mu}\right)\\
&=d\left(\dfrac{\det \phi}{\phi^*c}\right)\wedge \iota_{\widetilde{Y}} \tilde{\mu}+\dfrac{\det \phi}{\phi^*c}d(\iota_{\widetilde{Y}}\tilde{\mu}) \\
&=\left(\widetilde{Y}\left(\dfrac{\det \phi}{\phi^*c}\right)+\dfrac{\det \phi}{\phi^*c}\div_{\tilde{\mu}}\widetilde{Y}\right)\tilde{\mu}.
\end{alignedat}
\end{equation*}
However, we also have
\begin{equation*}
\begin{alignedat}{1}
\phi^*(\mathcal{L}_Y \mu)&=\phi^*(\div_\mu Y \mu)\\
&=\phi^*(\div_\mu Y)\phi^*\mu\\
&=\phi^* (\div_\mu Y)(\det \phi)\tilde{\mu},
\end{alignedat}
\end{equation*}
so putting these together yields the desired result since $\tilde{\mu}$ is nowhere-vanishing.
\end{proof}

In the geodesic and magnetic cases, the pullback $\phi^*$ of the smooth orbit equivalence $\phi: \widetilde{S}M\to SM$ sends the space $\mathcal{D}'_{\text{tr}}(SM)$ to $\mathcal{D}'_{\text{tr}}(\widetilde{S}M)$. More generally, however, the divergence term $V\lambda$ appearing in the transport equation breaks this down. 

Instead, a more useful perspective is to look at the following subspace of $2$-currents (or distributional $2$-forms) on $SM$ invariant by $F$: 
\begin{equation*}\label{eq:2-currents}
\mathcal{F}(SM)\coloneqq \left\{\sigma \in \mathcal{D}'(SM, \Lambda^2 T^*(SM))\mid \iota_F \sigma = d\sigma = 0\right\}.
\end{equation*}
This set only depends on the foliation corresponding to $F$, that is, it is invariant under time-changes, so we get a $\mathbb{C}$-linear isomorphism $\phi^*: \mathcal{F}(SM)\to \mathcal{F}(\widetilde{S}M)$. The 2-form
\begin{equation}\label{eq:definition-special-form}
\omega\coloneqq\iota_F \mu
\end{equation}
then allows us to establish a relationship with solutions to the transport equation. 
\begin{lemma}\label{lemma:current-isomorphism}
The map $L:\mathcal{D}'_{\textup{tr}}(SM)\to \mathcal{F}(SM)$ given by $u\mapsto u \omega$ is a $\mathbb{C}$-linear isomorphism.
\end{lemma}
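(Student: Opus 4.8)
The plan is to exhibit a two-sided inverse directly. The map $L$ is clearly $\mathbb{C}$-linear, so the content is bijectivity. First I would check that $L$ is well-defined: given $u\in\mathcal{D}'_{\mathrm{tr}}(SM)$, I need $\iota_F(u\omega)=0$ and $d(u\omega)=0$. The first is immediate since $\iota_F\omega=\iota_F\iota_F\mu=0$, hence $\iota_F(u\omega)=u\,\iota_F\omega=0$. For the second, compute $d(u\omega)=du\wedge\omega+u\,d\omega$. Now $d\omega=d\iota_F\mu=\mathcal{L}_F\mu-\iota_Fd\mu=\mathcal{L}_F\mu=(V\lambda)\mu$ by Lemma \ref{lemma:divergence} (and $d\mu=0$ since $\mu$ is top-degree on a $3$-manifold). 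So $u\,d\omega=u(V\lambda)\mu$. For the term $du\wedge\omega$, I would use the frame $(\alpha,\beta,\psi)$ with $\mu=\alpha\wedge\beta\wedge\psi$ and $\omega=\iota_F\mu$; since $F=X+\lambda V$ and $\alpha(F)=1$, $\beta(F)=0$, $\psi(F)=\lambda$, one gets $\omega=\beta\wedge\psi-\lambda\,\alpha\wedge\beta$. Wedging $du=(Fu)\alpha+(Hu)\beta+(Vu)\psi$ against this (using that only the $\alpha\wedge\beta\wedge\psi$ component survives) gives $du\wedge\omega=\big((Fu)+\lambda(Vu)\big)\mu$. Wait — I should be careful with signs and with the fact that $\omega$ is really $\iota_F\mu$ up to the orientation convention $\mu=-\alpha\wedge d\alpha$; the upshot after bookkeeping is $d(u\omega)=\big(Fu+V(\lambda u)\big)\mu=\big((F+V\lambda)u+\lambda\,Vu - \lambda\,Vu\big)\mu$, and more precisely one finds $d(u\omega)=\big((F+V\lambda)u\big)\mu$, which vanishes exactly because $u\in\mathcal{D}'_{\mathrm{tr}}(SM)$. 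So $L$ lands in $\mathcal{F}(SM)$.

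Next I would construct the inverse. Since $\omega=\iota_F\mu$ and $\mu$ is a nowhere-vanishing volume form with $F$ nowhere-vanishing, at each point $\omega$ spans the line $\iota_F\Lambda^3\subset\Lambda^2$, and any $2$-current $\sigma$ with $\iota_F\sigma=0$ is pointwise a multiple of $\omega$: indeed $\ker(\iota_F:\Lambda^2T^*_p(SM)\to\Lambda^1)$ is one-dimensional and contains $\omega_p\ne 0$. Thus there is a unique distribution $u$ with $\sigma=u\omega$; concretely $u=\sigma/\omega$ can be defined by pairing $\sigma$ against test $1$-forms, e.g. $u=\iota_V\iota_H\sigma$ up to a nonzero smooth factor, which makes the distributional nature transparent. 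Then $\iota_F\sigma=0$ is automatic, and the condition $d\sigma=0$ becomes, by the computation above, $(F+V\lambda)u=0$, i.e. $u\in\mathcal{D}'_{\mathrm{tr}}(SM)$. This gives the inverse map $\sigma\mapsto u$, and the two composites are visibly the identity.

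The main obstacle — and the only genuinely delicate point — is the distributional bookkeeping in the identity $d(u\omega)=\big((F+V\lambda)u\big)\mu$: one must justify the Leibniz rule $d(u\omega)=du\wedge\omega+u\,d\omega$ for a distributional $0$-current $u$ and smooth form $\omega$ (standard, by duality against test forms), and one must get the signs right in expressing the $\Lambda^3$-component of $du\wedge\omega$ via the frame, keeping track of the orientation convention $\mu=-\alpha\wedge d\alpha=\alpha\wedge\beta\wedge\psi$ and the formula $d\omega=(V\lambda)\mu$ from Lemma \ref{lemma:divergence}. Once this single identity is in place, both well-definedness of $L$ and the characterization of its inverse follow immediately, and the fibrewise-one-dimensionality of $\ker\iota_F$ on $\Lambda^2$ handles surjectivity with injectivity being clear ($u\omega=0\Rightarrow u=0$ since $\omega$ is nowhere zero).
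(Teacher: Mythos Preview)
Your approach is essentially the same as the paper's: establish the identity $d(u\omega)=\big((F+V\lambda)u\big)\mu$, then use that $\ker(\iota_F)\subset\Lambda^2$ is the line spanned by $\omega$ to get bijectivity. Two bookkeeping slips to fix: the coframe $(\alpha,\beta,\psi)$ is dual to $(X,H,V)$, not $(F,H,V)$, so $du=(Xu)\alpha+(Hu)\beta+(Vu)\psi$; and $\omega=\beta\wedge\psi+\lambda\,\alpha\wedge\beta$ (plus sign), whence $du\wedge\omega=(Xu+\lambda Vu)\mu=(Fu)\mu$ as needed. The paper sidesteps the frame computation entirely with the one-line identity $du\wedge\iota_F\mu=\iota_F(du)\,\mu$ (valid since $du\wedge\mu=0$ in top degree), which you may find cleaner.
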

\begin{proof}
Using Cartan's magic formula and Lemma \ref{lemma:divergence}, note that
\begin{equation*}
\begin{alignedat}{1}
d(u\omega)&= du\wedge \omega + u\, d\omega\\
&=du\wedge(\iota_F\mu)+ u \mathcal{L}_F\mu\\
%&=du\wedge (\iota_F\mu) + uV(\lambda)\mu\\
&=\iota_F(du)\mu +uV(\lambda)\mu\\
&=(Fu+V(\lambda)u)\mu.
\end{alignedat}
\end{equation*}
Therefore, the $2$-current $u\omega$ is closed if and only if $(F+V\lambda)u=0$. Since $F$ is nowhere-vanishing, any $2$-current $\sigma$ on $SM$ satisfying $\iota_F \sigma = 0$ must be of the form $\sigma=u\omega$ for some distribution $u\in \mathcal{D}'(SM)$.
\end{proof}
Thanks to this identification, we can now define a map $\Phi: \mathcal{D}'_{\text{tr}}(SM)\to\mathcal{D}'_{\text{tr}}(\widetilde{S}M)$ associated to the smooth orbit equivalence $\phi : \widetilde{S}M\to SM$ via the following diagram:
\begin{equation}\label{eq:definition-phi}
\begin{tikzcd}
\mathcal{F}(SM)\arrow[r, "\phi^*"] & \mathcal{F}(\widetilde{S}M)\arrow[d, "\widetilde{L}^{-1}"]\\
\mathcal{D}_{\text{tr}}'(SM)\arrow[r, "\Phi", dashrightarrow]\arrow[u, "L"] & \mathcal{D}_{\text{tr}}'(\widetilde{S}M)
\end{tikzcd}
\end{equation}
This point of view does not affect the wavefront set analysis. As a reminder, the \textit{wavefront set} $\text{WF}(u)$ of a distribution $u\in \mathcal{D}'(SM)$ describes the directions in $T^*(SM)$ along which the distribution is not smooth. It refines the notion of the singular support of $u$, which only records the points in $SM$ where $u$ is singular. For a detailed introduction, we refer the reader to \cite[Chapter 8]{hormander03}.
\begin{lemma}\label{lemma:wavefrontset-pullback}
If $\phi$ preserves the orientation, then, for all $u\in \mathcal{D}'_{\textup{tr}}(SM)$, we have
$$\textup{WF}(\Phi u)=\textup{WF}(\phi^* u).$$
\end{lemma}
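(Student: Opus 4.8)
The plan is to make the map $\Phi$ completely explicit and then reduce the statement to a standard microlocal fact. Concretely, I claim that for every $u\in\mathcal{D}'_{\textup{tr}}(SM)$ one has the identity
\[
\Phi u=\frac{\det\phi}{\phi^*c}\,\phi^*u,
\]
after which the conclusion is immediate, since multiplying a distribution by a smooth nowhere-vanishing function does not alter its wavefront set.

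To derive this identity I would unwind the diagram \eqref{eq:definition-phi}, so that $\Phi u=\widetilde{L}^{-1}\bigl(\phi^*(u\omega)\bigr)$, and the only real computation is the pullback of $\omega=\iota_F\mu$. Using $\phi^*(\iota_F\mu)=\iota_{\phi^{-1}_*F}\phi^*\mu$ and recalling from the proof of Lemma \ref{lemma:relation-divergences} that $\phi^{-1}_*F=\tfrac{1}{\phi^*c}\widetilde F$ and $\phi^*\mu=(\det\phi)\tilde\mu$, one obtains
\[
\phi^*\omega=\phi^*(\iota_F\mu)=\iota_{\frac{1}{\phi^*c}\widetilde F}\bigl((\det\phi)\tilde\mu\bigr)=\frac{\det\phi}{\phi^*c}\,\iota_{\widetilde F}\tilde\mu=\frac{\det\phi}{\phi^*c}\,\tilde\omega .
\]
Because $\phi$ is a diffeomorphism, pullback distributes over the product of a distribution and a smooth form, so $\phi^*(u\omega)=(\phi^*u)(\phi^*\omega)=\tfrac{\det\phi}{\phi^*c}(\phi^*u)\,\tilde\omega$. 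Applying $\widetilde{L}^{-1}$, which by Lemma \ref{lemma:current-isomorphism} simply strips off the factor $\tilde\omega$, yields the claimed formula for $\Phi u$.

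It remains to conclude. Since $\phi$ preserves the orientation, $\det\phi>0$, and $\phi^*c>0$ as $c\in\mathcal{C}^\infty(SM,\R_{>0})$; hence $f\coloneqq\tfrac{\det\phi}{\phi^*c}\in\mathcal{C}^\infty(\widetilde{S}M,\R_{>0})$ is nowhere-vanishing. For any distribution $w$ and smooth $f$ one has $\textup{WF}(fw)\subseteq\textup{WF}(w)$; applying this to $f^{-1}$ and $fw$ gives the reverse inclusion when $f$ is nowhere-vanishing, so $\textup{WF}(fw)=\textup{WF}(w)$. Taking $w=\phi^*u$ gives $\textup{WF}(\Phi u)=\textup{WF}(\phi^*u)$.

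I do not anticipate a genuine obstacle: the only point requiring care is the bookkeeping in the pullback of $\omega$—making sure the scaling factor comes out as $\tfrac{\det\phi}{\phi^*c}$ rather than its reciprocal—and observing that the orientation hypothesis is used precisely to guarantee that this scaling factor is a positive, hence nowhere-vanishing, smooth function, so that it is invertible within $\mathcal{C}^\infty$ and the above two-sided inclusion of wavefront sets applies.
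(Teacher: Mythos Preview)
Your proof is correct and follows essentially the same approach as the paper: both show $\Phi u = q\,\phi^*u$ for a smooth nowhere-vanishing $q$ and then invoke the standard fact that such multiplication leaves the wavefront set unchanged. The only difference is that the paper simply posits $q$ via $\phi^*\omega=q\tilde\omega$ without computing it, whereas you derive $q=\tfrac{\det\phi}{\phi^*c}$ explicitly; your final remark slightly overstates the role of the orientation hypothesis (even if $\phi$ reversed orientation, $\det\phi$ would be negative everywhere and $q$ still nowhere-vanishing), but this does not affect the argument.
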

\begin{proof}
Let $q\in \mathcal{C}^\infty(\widetilde{S}M, \R_{>0})$ be the function such that $\phi^*\omega=q\tilde{\omega}$. Then, we get
\begin{equation*}
\begin{alignedat}{1}
\Phi u &= \widetilde{L}^{-1}\phi^* Lu \\
&=\widetilde{L}^{-1}\phi^* (u\omega) \\
&=\widetilde{L}^{-1}(\phi^*(u) q\tilde{\omega})\\
&=q\phi^* u.
\end{alignedat}
\end{equation*}
Since multiplication by the nowhere-vanishing function $q$ is elliptic, we get the result by elliptic regularity (see \cite[Theorem 8.3.2]{hormander03}). 
\end{proof}

By the properties of wavefront sets under pullback operators (see \cite[Theorem 8.2.4]{hormander03} for instance), we thus obtain
$$\textup{WF}(\Phi u)= d\phi^\top(\textup{WF}(u))$$
for all $u\in \mathcal{D}'_{\textup{tr}}(SM)$, where $d\phi^\top: T^*(SM)\to T^*(\widetilde{S}M)$ is the symplectic lift of $\phi^{-1}$ to the cotangent bundles given by
$$d\phi^\top(v, \xi) \coloneqq\left(\phi^{-1}(v), d\phi^\top_{\phi^{-1}(v)} \xi\right), \qquad (v, \xi)\in T^*(SM).$$
\subsubsection{Vertical Fourier expansions}\label{subsection:Fourier} The space $\mathcal{C}^\infty(SM)$ breaks up as 
$$\mathcal{C}^\infty(SM)=\oplus_{k\in \Z}\Omega_k, \qquad \Omega_k \coloneqq\left\{ u\in \mathcal{C}^\infty(SM)\mid \, V u = iku\right\}.$$
This decomposition is orthogonal with respect to the $L^2$ inner product on $SM$ and with $\mathcal{C}^\infty$ being replaced by $L^2$. For any $u\in \mathcal{C}^\infty(SM)$, we shall write $u=\sum_{k\in \Z}u_k$, where the \textit{$k$th Fourier mode} $u_k\in \Omega_k$ is given by
\begin{equation}\label{eq:definition-fourier-mode}
u_k(x,v)\coloneqq\dfrac{1}{2\pi}\int_0^{2\pi}u(\rho_t(x,v))e^{-ikt}\, dt,
\end{equation}
with $\rho_t$ being the flow generated by $V$. More generally, any distribution $u\in \mathcal{D}'(SM)$ can be decomposed as $u=\sum_{k\in \Z}u_k$, where each $u_k\in \mathcal{D}'(SM)$ is defined by 
$$\langle u_k , \varphi \rangle \coloneqq \langle u, \varphi_{-k}\rangle_{\mathcal{D}'(SM)}, \qquad \varphi\in \mathcal{C}^\infty(SM)$$
and satisfies $Vu_k = iku_k$. 

If a distribution on $SM$ only has finitely many non-trivial Fourier modes, we say that it has \textit{finite Fourier degree}. The smallest $m\in \N$ such that $u_k=0$ for all $|k|> m$ is then called the \textit{Fourier degree} of $u$. 

It also worth noting that the ladder operators $\eta_\pm$ in equation \eqref{eq:ladder-operators} take their name from the fact that they act as raising/lowering operators on the Fourier decomposition, that is, 
$$\eta_\pm : \Omega_k\to \Omega_{k\pm 1}$$
for all $k\in \Z$. In particular, we have $(Xu)_k = \eta_-u_{k+1}+\eta_+u_{k-1}$ for any $u\in \mathcal{D}'(SM)$. 

\subsection{Complex geometry}\label{subsection:complex-geometry} The conformal class of the Riemannian metric $g$ and the orientation of $M$ induce a complex structure $J: TM\to TM$ on $M$, making it into a Riemann surface which we denote by $(M, J)$. 
\subsubsection{Complex structures.} The \textit{Teichmüller space} of $M$, denoted by $\mathcal{T}(M)$, is the space of complex structures on $M$ modulo the equivalence relation that $J\sim \widetilde{J}$ if and only if there exists a diffeomorphism $\psi : M\to M$ isotopic to the identity such that $\psi^*\widetilde{J}=J$. We will denote such an equivalence class of complex structures by $[J]$. 

The \textit{mapping class group} $\text{MCG}(M)$ is defined as the quotient of orientation-preserving diffeomorphisms on $M$ modulo isotopy. They act on $\mathcal{T}(M)$ by pullback, and the quotient space $\mathcal{M}(M)\coloneqq\mathcal{T}(M)/\text{MCG}(M)$ is the \textit{moduli space of complex structures} on $M$. See \cite{farb12} for a thorough introduction. 

Each complex structure $J$ determines a \textit{canonical line bundle} $\kappa : = T^\ast_{1,0} M$ and its conjugate $\overline{\kappa} : = T^\ast_{0,1} M$  over $M$. Locally, the sections of $\kappa$ have the form $w(z)\, dz$ while the sections of $\overline{\kappa}$ have the form $w(z)\, d\overline{z}$. The dual of $\kappa$ is called the \textit{anti-canonical line bundle} and we identify it with $\overline{\kappa}$ by using the Hermitian inner product on $\kappa$ induced by the Riemannian metric on $M$. We then denote this bundle by $\kappa^{-1}$ so that the tensor powers $\kappa^{\otimes k}$ make sense for all $k\in \Z$. 

We will denote by $H^0_J(M, \kappa^{\otimes k})$ the space of $J$-holomorphic sections of the $k$th tensor power of the canonical line bundle $\kappa$. Locally, its elements have the form $w(z)\, dz^k$ for $k\geq 0$ and $w(z)\, d\bar{z}^{-k}$ for $k<0$. 

\subsubsection{Fiberwise holomorphic distributions}\label{subsubsection:fibrewise-holomorphic-distributions} Each subspace $\Omega_k$ of Fourier modes can be identified with $\mathcal{C}^\infty(M, \kappa^{\otimes k})$, the set of smooth sections of the bundle $\kappa^{\otimes k}$ \cite[Lemma 6.1.19]{paternain23b}. Indeed, we have a $\mathbb{C}$-linear isomorphism
$$\pi_k^\ast :\mathcal{C}^\infty(M, \kappa^{\otimes k})\to \Omega_k$$
given by restriction to $SM$, that is, locally, for $k\geq 0$, 
$$\pi^\ast_k(w\, dz^k)(x,v)=w(x)\, (dz(v))^k.$$
Note that the definition of $\pi_k^\ast$ depends on the choice of the metric $g$. We denote by $\pi_{k\ast}$ its $L^2$ adjoint. Locally, for $k\geq 0$, we have
$$(\pi_{k\ast} u)(x)=\left(\int_{S_x M}u(x, \cdot)\psi\right)\, dz^k.$$
Once we extend the operators to distributions by duality, the projection onto the $k$th Fourier mode is simply given by $(2\pi)^{-1}\pi^\ast_k \pi_{k\ast}$ acting on $\mathcal{D}'(SM)$. 

Under this identification, we can essentially think of the raising/lowering operators $\eta_\pm$ as $\partial$ and $\bar{\partial}$ operators thanks to the following result (see \cite[pp.~153--154]{paternain23b}). 
\begin{lemma}\label{lemma:raising-lowering-equivalence}
For $k\geq 0$, the following diagram commutes.
\begin{equation*}
\begin{tikzcd}
\mathcal{C}^\infty(M, \kappa^{\otimes k})\arrow[d, "\bar{\partial}" ']\arrow[r, "\pi_k^\ast"] & \Omega_k\arrow[d, "\eta_-"]\\
\mathcal{C}^\infty(M, \kappa^{\otimes k}\otimes \bar{\kappa})\arrow[r, "\pi_{k-1}^\ast"] & \Omega_{k-1}
\end{tikzcd}
\end{equation*}
For $k\leq 0$, the operator $\pi_k^*$ also intertwines the operators $\partial$ and $\eta_+$. 
%Therefore $\bar{\partial}A=0$ if and only if $\eta_-\pi_k^\ast A=0$. 
%$$\eta_-\pi_k^*=\pi_{k-1}^*\bar{\partial}.$$
\end{lemma}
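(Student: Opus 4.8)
The plan is to reduce the claim to an explicit computation in an isothermal coordinate chart. Since the assertion is local, fix a holomorphic coordinate $z=x_1+ix_2$ on $M$ in which $g=e^{2\rho}\,|dz|^2$, and coordinatize the corresponding piece of $SM$ by $(z,\phi)$, where $\phi$ records the direction of the unit vector relative to $\partial_{x_1}$, so that $v=e^{-\rho}(\cos\phi\,\partial_{x_1}+\sin\phi\,\partial_{x_2})$ and $V=\partial_\phi$. Then $dz(v)=e^{-\rho}e^{i\phi}$, and a section of $\kappa^{\otimes k}$ written locally as $w\,dz^k$ satisfies $\pi_k^\ast(w\,dz^k)=w\,e^{-k\rho}e^{ik\phi}$ (with $dz^{-1}$ denoting the dual generator of $\kappa^{-1}$ when $k<0$).

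The first step is to record the local form of the frame. From the geodesic equation with the Christoffel symbols of a conformal metric one gets $X=e^{-\rho}\bigl(\cos\phi\,\partial_{x_1}+\sin\phi\,\partial_{x_2}+(-\rho_{x_1}\sin\phi+\rho_{x_2}\cos\phi)\,\partial_\phi\bigr)$, and $H=[V,X]$ is then immediate by differentiating in $\phi$. Assembling $\eta_\pm=\tfrac12(X\mp iH)$ and collecting real and imaginary parts yields the compact expressions
$$\eta_+=e^{-\rho}e^{i\phi}\bigl(\partial_z+i\rho_z\,\partial_\phi\bigr),\qquad \eta_-=e^{-\rho}e^{-i\phi}\bigl(\partial_{\bar z}-i\rho_{\bar z}\,\partial_\phi\bigr),$$
where $\rho_z=\partial_z\rho$ and $\rho_{\bar z}=\partial_{\bar z}\rho$.

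With these in hand, the case $k\geq 0$ is a short direct check: applying $\eta_-$ to $\pi_k^\ast(w\,dz^k)=w\,e^{-k\rho}e^{ik\phi}$, the contribution $k\rho_{\bar z}w$ coming from the term $-i\rho_{\bar z}\partial_\phi$ cancels exactly the $-k\rho_{\bar z}w$ produced by differentiating the conformal factor $e^{-k\rho}$, so only $\partial_{\bar z}w$ survives and $\eta_-\pi_k^\ast(w\,dz^k)=e^{-(k+1)\rho}e^{i(k-1)\phi}\partial_{\bar z}w$. On the other side, $\bar\partial(w\,dz^k)=(\partial_{\bar z}w)\,d\bar z\otimes dz^k$; identifying $\bar\kappa$ with $\kappa^{-1}$ via the Hermitian metric turns $d\bar z$ into a conformal multiple of $dz^{-1}$, so this becomes a section of $\kappa^{\otimes(k-1)}$, and applying $\pi_{k-1}^\ast$ reproduces exactly $e^{-(k+1)\rho}e^{i(k-1)\phi}\partial_{\bar z}w$. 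This proves commutativity for $k\geq 0$.

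The case $k\leq 0$ then follows by complex conjugation, with no further computation. Since $X$ and $H$ are real, $\overline{\eta_-u}=\eta_+\bar u$; moreover $\overline{\Omega_k}=\Omega_{-k}$, $\overline{\pi_k^\ast h}=\pi_{-k}^\ast\bar h$, and $\overline{\bar\partial h}=\partial\bar h$, conjugation interchanging the holomorphic and anti-holomorphic structures and $\kappa$ with $\bar\kappa$. Conjugating the relation $\eta_-\pi_k^\ast=\pi_{k-1}^\ast\bar\partial$ (valid for $k\geq 0$) gives $\eta_+\pi_{-k}^\ast=\pi_{-k+1}^\ast\partial$, which is the stated intertwining for the index $-k\leq 0$. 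The one genuinely delicate point throughout is the bookkeeping of the powers of $e^{\rho}$: one must fix the normalization of the Hermitian inner product on $\kappa$ and of the Dolbeault operator so that the $\rho$-dependent connection terms implicit in $\bar\partial$ on a tensor power of $\kappa$ match the $i\rho_{\bar z}\partial_\phi$ term appearing in $\eta_-$; once those conventions are fixed as in \cite{paternain23b}, the cancellations above are exact and the diagram commutes.
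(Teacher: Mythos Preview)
Your proof is correct. The paper itself does not give a proof of this lemma; it simply cites \cite[pp.~153--154]{paternain23b}, and the argument there is precisely the local computation in isothermal coordinates that you carry out, including the formulas $\eta_\pm=e^{-\rho}e^{\pm i\phi}(\partial_{z/\bar z}\pm i\rho_{z/\bar z}\partial_\phi)$ and the cancellation of the $k\rho_{\bar z}$ terms. Your reduction of the $k\leq 0$ case to the $k\geq 0$ case via complex conjugation is also standard, and your caveat about the normalization of the Hermitian identification $\bar\kappa\cong\kappa^{-1}$ is well placed: once one fixes $d\bar z\leftrightarrow e^{-2\rho}\,dz^{-1}$ so that restriction to $SM$ is consistent, the factor $e^{-(k+1)\rho}$ you obtain on both sides matches exactly.
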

As a result, for $k\geq 0$, the operator $\pi_k^*$ gives us an identification
$$H^0_J (M, \kappa^{\otimes k})\cong \Omega_k \cap \ker \eta_-.$$

We also introduce the following terminology.
\begin{definition}
A distribution $u\in \mathcal{D}'(SM)$ is said to be \textit{fibrewise holomorphic}  if $u_k=0$ for all $k<0$.
\end{definition}
Equivalently, if we define the \textit{Szegő projectors} $S_\pm : \mathcal{D}'(SM)\to \mathcal{D}'(SM)$ by $$S_+u \coloneqq \sum_{k\geq 0}u_k, \qquad S_-u \coloneqq \sum_{k\leq 0}u_k,$$
then a distribution $u$ is fibrewise holomorphic if and only if $S_+ u = u$. The projectors satisfy the commutation relations
\begin{equation}\label{eq:commutation}
[S_+, X]u=\eta_+ u_{-1}-\eta_- u_0, \qquad [S_-, X]u=\eta_- u_1-\eta_+ u_0.
\end{equation}
We will be interested in the family of fibrewise holomorphic distributions that satisfy the transport equation:
\begin{equation}\label{eq:invariant-fibrewiseholomorphic}
\mathcal{D}'_{\text{tr},+}(SM)  \coloneqq\left\{u\in \mathcal{D}'(SM) \mid (F+V\lambda)u=0, \, S_+ u = u\right\}.
\end{equation}
\subsubsection{Torelli's theorem} The complex vector space $H^0_J(M, \kappa)$ of $J$-holomorphic $1$-forms has the same dimension as the genus of $M$ (see \cite[Proposition III.2.7]{farkas92}). Given a canonical basis $\{a_j, b_j\}$ of the homology $H_1(M;\Z)$ on $M$, the following result gives us the existence of a useful basis (see \cite[Proposition, p. 63]{farkas92}). 

\begin{proposition}
There exists a unique basis $\{\zeta_j\}$ for $H^0_J(M, \kappa)$ with the property
\begin{equation}\label{eq:basis-condition}
\int_{a_j}\zeta_k = \delta_{jk}.
\end{equation}
Furthermore, the matrix $\Pi(J)$ with $(j,k)$-entry
$$(\Pi(J))_{jk}\coloneqq\int_{b_j}\zeta_k$$
is symmetric with positive definite imaginary part. 
\end{proposition}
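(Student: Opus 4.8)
The plan is to derive everything from the \emph{Riemann bilinear relations}. Write $n$ for the genus of $M$, so that $\dim_{\mathbb C}H^0_J(M,\kappa)=n$ (as recalled above), and note that every holomorphic $1$-form is closed. Fixing the canonical basis $\{a_j,b_j\}$ of $H_1(M;\Z)$, with intersection numbers $a_i\cdot a_j=b_i\cdot b_j=0$ and $a_i\cdot b_j=\delta_{ij}$, the relation states that for any closed $1$-forms $\eta,\eta'$ on $M$,
\begin{equation*}
\int_M\eta\wedge\eta'=\sum_{j=1}^n\left(\int_{a_j}\eta\int_{b_j}\eta'-\int_{b_j}\eta\int_{a_j}\eta'\right).
\end{equation*}
Granting this, I would first obtain $\{\zeta_j\}$ by showing that the period map $P:H^0_J(M,\kappa)\to\mathbb C^n$, $\omega\mapsto\big(\int_{a_1}\omega,\dots,\int_{a_n}\omega\big)$, is a linear isomorphism. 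Since domain and codomain have equal dimension, only injectivity is needed: if $\int_{a_j}\omega=0$ for every $j$ then also $\int_{a_j}\bar\omega=\overline{\int_{a_j}\omega}=0$, so the bilinear relation with $\eta=\omega$, $\eta'=\bar\omega$ forces $\int_M\omega\wedge\bar\omega=0$; but in a holomorphic coordinate $z=x+iy$ compatible with the orientation one has $\tfrac i2\,\omega\wedge\bar\omega=|w|^2\,dx\wedge dy\ge0$ for $\omega=w\,dz$, hence $\tfrac i2\int_M\omega\wedge\bar\omega>0$ unless $\omega=0$. Thus $P$ is invertible and $\zeta_j:=P^{-1}(e_j)$ is the unique basis satisfying \eqref{eq:basis-condition}.

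For the properties of $\Pi(J)$, I would feed special pairs into the bilinear relation. With $\eta=\zeta_j$, $\eta'=\zeta_k$, both of type $(1,0)$, the wedge $\zeta_j\wedge\zeta_k$ vanishes pointwise, so the left side is $0$ while the right side collapses via $\int_{a_l}\zeta_m=\delta_{lm}$ to $(\Pi(J))_{jk}-(\Pi(J))_{kj}$; hence $\Pi(J)$ is symmetric. Taking instead $\eta=\zeta_j$, $\eta'=\bar\zeta_k$ and using $\int_{a_l}\bar\zeta_k=\delta_{lk}$, $\int_{b_l}\bar\zeta_k=\overline{(\Pi(J))_{lk}}$, the right side becomes $\overline{(\Pi(J))_{jk}}-(\Pi(J))_{kj}=-2i\,\mathrm{Im}(\Pi(J))_{jk}$ after invoking symmetry, so $\mathrm{Im}(\Pi(J))_{jk}=\tfrac i2\int_M\zeta_j\wedge\bar\zeta_k$. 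Then for $c\in\mathbb C^n$, with $\omega:=\sum_jc_j\zeta_j$,
\begin{equation*}
\sum_{j,k}c_j\bar c_k\,\mathrm{Im}(\Pi(J))_{jk}=\tfrac i2\int_M\omega\wedge\bar\omega\ge0,
\end{equation*}
with equality iff $\omega=0$ iff $c=0$ by linear independence of the $\zeta_j$; this is exactly positive definiteness of $\mathrm{Im}\,\Pi(J)$.

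The real work, and the step I expect to be the main obstacle, is the bilinear relation itself. I would represent the canonical basis by loops through a common base point, cut $M$ along them to get a $4n$-gon $\Delta$ with boundary glued according to the word $\prod_{j=1}^n a_jb_ja_j^{-1}b_j^{-1}$, and use that $\Delta$ is simply connected so $\eta=dF$ there; Stokes' theorem then gives $\int_M\eta\wedge\eta'=\int_\Delta d(F\eta')=\int_{\partial\Delta}F\eta'$. Pairing the $4n$ edges of $\partial\Delta$ according to the gluing, the primitive $F$ differs on the two glued copies of the edge labelled $a_j$ (resp.\ $b_j$) by the period of $\eta$ over $b_j$ (resp.\ over $a_j$), up to a sign fixed by the orientation conventions, so the boundary integral telescopes into the stated bilinear form. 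The only delicate point is this bookkeeping of orientations and identifications on $\partial\Delta$; with suitable conventions it matches the classical statement (see \cite{farkas92}), and the rest is the elementary linear algebra above.
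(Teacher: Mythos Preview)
Your argument is correct and is precisely the classical route via the Riemann bilinear relations; the paper does not supply its own proof of this proposition but simply cites \cite[Proposition, p.~63]{farkas92}, where exactly the argument you outline is carried out. So there is nothing to compare: your proof is the standard one the paper defers to.
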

The space of symmetric matrices with positive definite imaginary part and size given by the genus of $M$ is called the \textit{Siegel upper half-space} $\mathcal{H}(M)$. We thus get a well-defined \textit{period matrix} map 
$$\Pi : \mathcal{T}(M)\to \mathcal{H}(M).$$ 

The following form of Torelli's theorem tells us that period matrices capture a lot of information about the complex structure. 
\begin{theorem}\label{theorem:torelli}
Assume that $M$ has genus $\geq 2$. If $\Pi(J)=\Pi(\widetilde{J})$, then there exists an orientation-preserving diffeomorphism $\psi: M\to M$ such that $\psi^*\widetilde{J}=J$. 
\end{theorem}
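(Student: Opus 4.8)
\textit{Proof proposal.} The plan is to reduce the statement to the classical Torelli theorem for principally polarized abelian varieties and then to translate the resulting isomorphism of Riemann surfaces back into the language of complex structures on the smooth manifold $M$.

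First I would observe that the hypothesis $\Pi(J)=\Pi(\widetilde{J})$ --- an equality of matrices computed with respect to the \emph{same} canonical homology basis $\{a_j,b_j\}$ of $H_1(M;\Z)$ --- forces the two polarized Jacobians to coincide. Writing $g$ for the genus, the map $c\mapsto \bigl(\int_c \zeta_1,\dots,\int_c\zeta_g\bigr)$ identifies $\operatorname{Jac}(M,J)$ with $\mathbb{C}^g/\Lambda_J$, where $\Lambda_J$ is the lattice spanned by the standard basis vectors (the images of the $a_j$, by \eqref{eq:basis-condition}) together with the columns of $\Pi(J)$ (the images of the $b_j$); the principal polarization is the Riemann form induced by the intersection pairing on $H_1(M;\Z)$ in the basis $\{a_j,b_j\}$, which does not depend on $J$ at all, and the symmetry and positive-definiteness of $\operatorname{Im}\Pi$ established above are exactly the Riemann bilinear relations making this a polarization. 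Hence $\Pi(J)=\Pi(\widetilde{J})$ yields $\Lambda_J=\Lambda_{\widetilde{J}}$ and an equality of principally polarized abelian varieties $(\operatorname{Jac}(M,J),\Theta_J)=(\operatorname{Jac}(M,\widetilde{J}),\Theta_{\widetilde{J}})$, theta divisors included (the latter being determined up to a translation that does not affect what follows).

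Next I would invoke the classical Torelli theorem: for a compact Riemann surface of genus $\geq 2$, the isomorphism class of its Jacobian as a principally polarized abelian variety determines the isomorphism class of the surface. The standard argument reconstructs the curve from the pair $(\operatorname{Jac},\Theta)$ --- for instance via Andreotti's analysis of the Gauss map of the theta divisor, or via Riemann's singularity theorem identifying $\operatorname{Sing}(\Theta)$ with the locus of special linear systems and thereby recovering the canonical model of the curve. Applying this to the common polarized abelian variety produced above yields a biholomorphism between $(M,J)$ and $(M,\widetilde{J})$.

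Finally I would unwind definitions: a biholomorphism between the Riemann surfaces $(M,J)$ and $(M,\widetilde{J})$ is, on the underlying smooth manifold, precisely a diffeomorphism $\psi:M\to M$ whose differential intertwines the two complex structures, $d\psi\circ J=\widetilde{J}\circ d\psi$, that is, $\psi^*\widetilde{J}=J$; and a biholomorphism automatically preserves orientation. This produces the desired $\psi$. The only genuinely substantive ingredient is the classical Torelli theorem itself --- the passage to polarized Jacobians and the translation back to complex structures are purely formal --- so in the write-up this step would be cited rather than reproved.
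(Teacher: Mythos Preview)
Your outline is correct and matches the paper's treatment: the paper does not give an independent proof of this statement but simply cites \cite[Theorem III.12.3]{farkas92}, which is precisely the classical Torelli theorem you invoke. Your reduction to polarized Jacobians and the translation back to complex structures on the fixed smooth manifold $M$ is the standard unpacking of that citation, so there is nothing to add.
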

We refer to \cite[Theorem III.12.3]{farkas92} for a proof. 

\subsection{Hyperbolic dynamics}\label{subsection:hyperbolic-dynamics} We now further assume that the flow of the thermostat $(M, g, \lambda)$ is \textit{Anosov} (or \textit{uniformly hyperbolic}). 
\subsubsection{Definition} Recall that the Anosov property means that there exist a flow-invariant continuous splitting 
$$T(SM)=\R F \oplus E_s\oplus E_u$$
and uniform constants $C\geq 1$ and $0<\rho<1$ such that for all $t\geq 0$ we have
\begin{equation}\label{eq:hyperbolic-estimates}
\left\Vert d\varphi_{t}|_{E_s}\right\Vert \leq C\rho^t, \qquad\left\Vert d\varphi_{-t}|_{E_u}\right\Vert \leq C\rho^t.
\end{equation}
In the geodesic case, the contact form $\alpha$ is preserved, so $\ker \alpha =\mathbb{H}\oplus \mathbb{V}= E_s \oplus E_u$. It is then known that $E_s\cap \mathbb{V}=\{0\}=E_u\cap \mathbb{V}$. For a thermostat, we instead know by \cite[Lemma 4.1]{dairbekov07} that
\begin{equation}\label{eq:transversality}
(\R F\oplus E_s)\cap \mathbb{V}=\{0\}=(\R F\oplus E_u)\cap \mathbb{V}.
\end{equation}
Here, $\R F\oplus E_{s/u}$ are the \textit{weak stable} and \textit{unstable subbundles}. This implies that there exist $r^{s}, r^u\in \mathcal{C}^0(SM,\R)$ such that
\begin{equation}\label{eq:pre-riccati}
Y^s\coloneqq H+r^sV\in \R F\oplus E_s, \qquad Y^u\coloneqq H+r^uV\in \R F\oplus E_u.
\end{equation}
In fact, the weak stable and unstable subbundles are $\mathcal{C}^1$ (see \cite[Corollary 1.8]{hasselblatt94}), so the functions $r^{s}$ and $r^u$ are also $\mathcal{C}^1$ (and smooth along the flow since each subbundle $\R F\oplus E_{s/u}$ is flow-invariant).  The Anosov property implies that $r^s\neq r^u$ everywhere. One may in fact show that $r^s<r^u$, so the global frame $(F, Y^s, Y^u)$ is positively oriented. See Figure \ref{figure:subbundles}. 

\begin{lemma}
Let $(M, g, \lambda)$ be an Anosov thermostat. Then, the differentiable functions $r^{s}, r^u \in \mathcal{C}^1(SM, \R)$ uniquely characterized by equation \eqref{eq:pre-riccati} satisfy $r^s<r^u$.
\end{lemma}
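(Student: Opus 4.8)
The plan is to reduce the ordering to an elementary fact about a flow on a circle bundle. Since the excerpt already records that $r^s\ne r^u$ everywhere, and $r^u-r^s$ is continuous on the connected manifold $SM$, this difference has constant sign, so it suffices to rule out $r^u<r^s$. First I would pass to the rank‑$2$ normal bundle $N\coloneqq T(SM)/\R F$, on which $d\varphi_t$ induces a linear cocycle $\Psi_t$ with $\Psi_0=\mathrm{id}$. Because $(E_s\oplus E_u)\cap\R F=\{0\}$, the quotient map restricts to a $\Psi_t$‑equivariant isomorphism $E_s\oplus E_u\to N$, so by the hyperbolic estimates \eqref{eq:hyperbolic-estimates} the images $\bar E_s,\bar E_u\subset N$ of $E_s,E_u$ are, forward in time, uniformly contracted and expanded respectively (the quotient norm on $N$ being uniformly comparable to the Sasaki norm on $E_s\oplus E_u$ over the compact base). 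Using the classes $\bar H,\bar V$ of $H,V$ as a global frame for $N$ and the affine chart $[\bar H+r\bar V]\leftrightarrow r$, the condition that a line field in $N$ be $\Psi_t$‑invariant is, along the flow, a Riccati equation $Fr+r^2+\mathfrak b\,r+\mathfrak a=0$, the coefficients $\mathfrak a,\mathfrak b\in\mathcal C^\infty(SM)$ being read off from the structure equations on $SM$ and from $\lambda$; the only feature I use is that the coefficient of $r^2$ is $+1$ (automatic, since $r$ is a slope/logarithmic‑derivative coordinate). By \eqref{eq:pre-riccati}, $r^s$ and $r^u$ are then exactly the slopes of the $\Psi_t$‑invariant sections $\bar E_s$ and $\bar E_u$. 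Projectivising, $\Psi_t$ descends to a flow $\overline\Psi_t$ on the $\mathbb{RP}^1$‑bundle $P\coloneqq\mathbb P(N)$ covering $\varphi_t$, acting on each fibre circle by orientation‑preserving maps (the determinant of $\Psi_t$ is positive, since the cocycle starts at the identity and is driven by real coefficients).

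I would then extract two facts. \emph{First}, because $\Psi_t$ is a hyperbolic cocycle on the rank‑$2$ bundle $N$, the induced dynamics on each fibre $\mathbb{RP}^1$ of $P$ is of North–South type: $\bar E_u$ is the forward‑attracting invariant section, $\bar E_s$ the forward‑repelling one, these are the only invariant sections, and — using orientation‑preservation — along each of the two open arcs into which $\bar E_s$ and $\bar E_u$ divide the circle, $\overline\Psi_t$ moves points monotonically from the $r^s$‑slope toward the $r^u$‑slope. \emph{Second}, the section $\mathbb V$, i.e.\ $\{r=\infty\}$, is \emph{not} $\overline\Psi_t$‑invariant: this is precisely the transversality \eqref{eq:transversality}. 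Hence $\overline\Psi_t$ crosses $\mathbb V$ transversally, and in the chart $\rho\coloneqq 1/r$ near $\{r=\infty\}$ the Riccati equation becomes $\dot\rho=1+\mathfrak b\rho+\mathfrak a\rho^2$ (dot denoting the derivative along $\overline\Psi_t$), so $\dot\rho=1>0$ on the section $\{\rho=0\}=\mathbb V$; that is, $\overline\Psi_t$ crosses $\mathbb V$ in the direction of increasing $\rho$ — from the region of very negative $r$ toward the region of very large positive $r$.

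Combining these: suppose $r^u<r^s$. Then the open arc of the fibre circle containing $r=\infty$ has endpoints $r^s,r^u$ and equals $\{r>r^s\}\cup\{\infty\}\cup\{r<r^u\}$. Traversing it in the direction of the flow, from the repelling slope $r^s$ to the attracting slope $r^u$, the coordinate $r$ must first increase from $r^s$ to $+\infty$, wrap through $\infty$, and then increase from $-\infty$ up to $r^u$; so $\overline\Psi_t$ would cross $\mathbb V$ coming from the region of very large positive $r$ and heading into the region of very negative $r$, contradicting the crossing direction computed above. (If $r^s<r^u$, the same bookkeeping is consistent.) Therefore $r^u<r^s$ is impossible, and since $r^s\ne r^u$ everywhere we conclude $r^s<r^u$.

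The step I expect to be the main obstacle is the careful justification of the North–South, monotone structure of $\overline\Psi_t$ on the $\mathbb{RP}^1$‑bundle from the hyperbolic estimates: although this is classical for hyperbolic cocycles on a rank‑$2$ bundle, one has to work on the quotient $N$ rather than on $\mathbb H\oplus\mathbb V$ (which, unlike in the geodesic case, is not $F$‑invariant for a thermostat), to note the uniform comparability of the relevant norms, and to be careful with the signs when writing down the Riccati equation and its expression in the $\rho$‑chart. Everything after that is bookkeeping on the circle.
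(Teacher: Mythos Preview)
Your argument is correct and rests on the same core observation as the paper's --- that the direction in which the vertical line $\mathbb V$ first moves under the projectivised flow, together with the attractor/repeller structure of $\bar E_u/\bar E_s$, pins down the ordering --- but the packaging is different. The paper localises: it picks a single point where $V\lambda=0$, sets $\zeta(t)=d\varphi_{-t}(V(\varphi_t(x,v)))$, a curve living in the \emph{fixed} tangent space $T_{(x,v)}(SM)$, computes $\dot\zeta(0)=[F,V]=-H$ there, and then appeals in one sentence to $E_u$ being attracting and $E_s$ repelling. You instead work globally on the projectivised normal bundle, derive the Riccati equation, and read off the crossing direction $\dot\rho=1>0$ at $\rho=0$ everywhere. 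Your route is longer but more explicit about what is really being used (the sign of the leading Riccati coefficient); the paper's is shorter but its final implication is just as terse as the step you flag.

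On your flagged obstacle: the ``monotone from $r^s$ to $r^u$ along each arc'' claim is indeed the delicate point, and orientation--preservation of the fibre maps alone does not give it --- the fibre ODE is non-autonomous, so the vertical component of the generator need not keep a fixed sign on an arc. A clean fix, using only what you have already set up, is the standard cross-ratio identity for Riccati equations: along any flow line, with $u\coloneqq (r-r^s)/(r-r^u)$, one gets $\tfrac{d}{dt}\ln|u| = r^u-r^s$ (this uses precisely that the $r^2$-coefficient is $+1$). If $r^u<r^s$ everywhere, compactness gives $r^u-r^s\le -\delta<0$, so $|u|\to 0$ and every projective orbit converges to $\bar E_s$ forward in time, contradicting that $\bar E_u$ is the forward attractor. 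This bypasses the monotonicity claim entirely and in fact makes the $\dot\rho$ computation unnecessary, though your computation is a good consistency check.
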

\begin{proof}
Since $r^s\neq r^u$ everywhere, it suffices to show the inequality at a single point. By compactness, we can pick $(x,v)\in SM$ such that $(V\lambda)(x,v)=0$. Let us define
$$\zeta(t)\coloneqq d\varphi_{-t}(V(\varphi_t(x,v))).$$
Differentiating with respect to $t$ and setting $t=0$, we obtain 
$$\dot{\zeta}(0)=[F,V](x,v).$$
Using that $[V, F]=H+V(\lambda)V$ yields
$$\dot{\zeta}(0)=-H(x,v).$$
Since $r^{s}(x,v)\neq r^{u}(x,v)$, there is a unique constant $c\in \R$ such that $V(x,v)+cX(x,v)$ belongs to $E_s\oplus E_u$. Therefore, since $E_s$ and $E_u$ are uniformly repelling and attracting sets on $E_s\oplus E_u$, respectively, we must have $r^s(x,v)<r^u(x,v)$ at this point.
\end{proof}

\begin{remark}
When $\lambda=0$ and $K_g <0$, that is, in the geodesic case with negative curvature, we have the stronger statement  $r^s < 0 < r^u$ because $[X,H]=\pi^*(K_g)V$. 
\end{remark}

The dual subbundles are defined by
$$ E_s\ast (\R F\oplus E_s)=0=E_u^\ast(\R F\oplus E_u).$$
One can check that $E_s^*$ and $E_u^*$ satisfy analogues of the estimates \eqref{eq:hyperbolic-estimates}, with $d\varphi_t$ replaced by $(d\varphi_t^{\top})^{-1}$ (inverse of the transpose). Translated to the setting of the cotangent bundle, property \eqref{eq:transversality} then becomes 
\begin{equation}\label{eq:transversality-cotangent}
E_s^\ast \cap \mathbb{H}^* = \{0\}=E_u^\ast\cap \mathbb{H}^\ast. 
\end{equation}
Further note that 
$$\Sigma=E_s^*\oplus E_u^*,$$
where 
$$\Sigma \coloneqq \left\{(v, \xi)\in T^*(SM)\mid \xi(F(v))=0\right\}$$
is the \textit{characteristic set} of the operator $F$ (usually defined without the zero section).

\begin{figure}[h]
\centering
\includegraphics[scale=0.285]{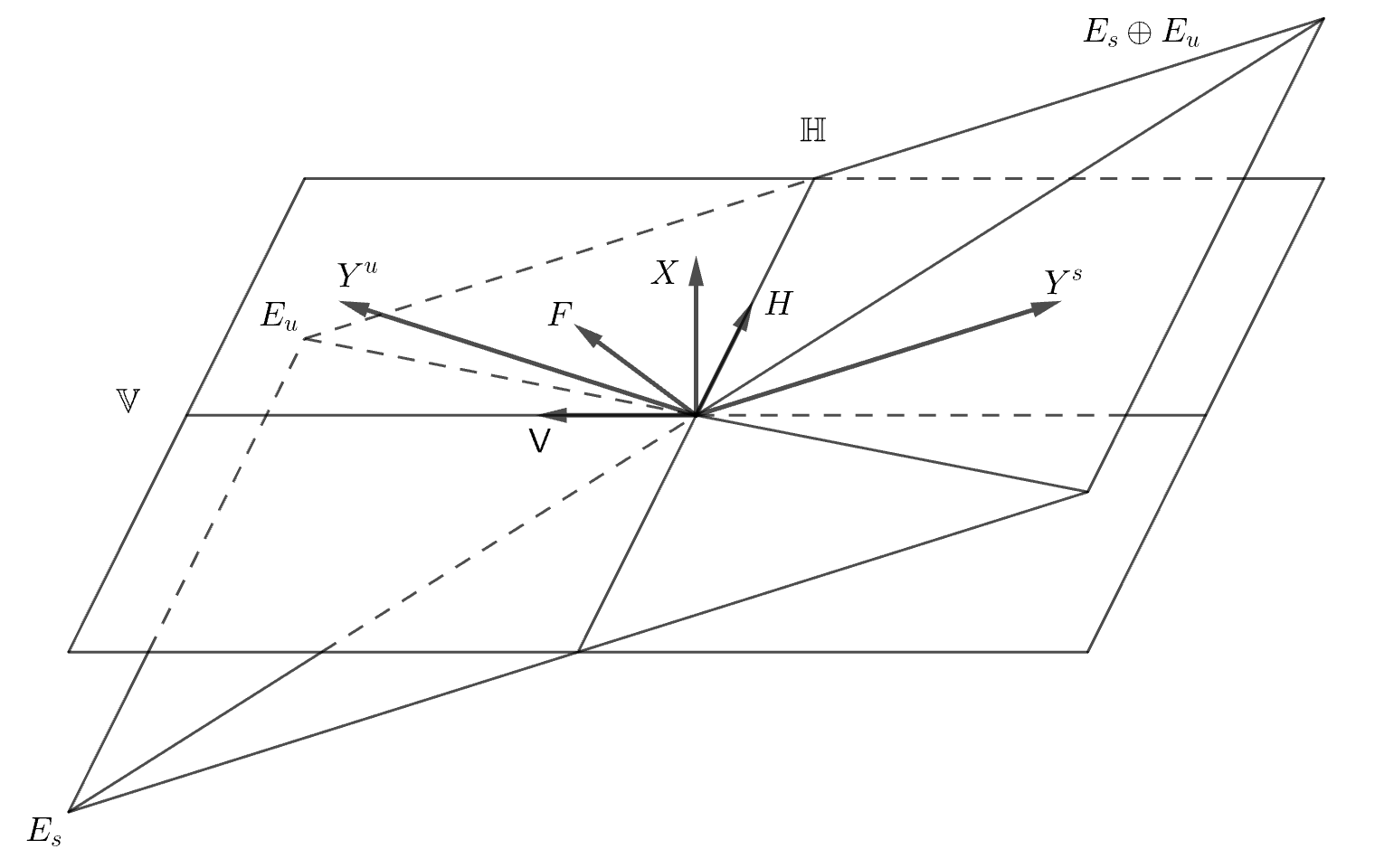}
\includegraphics[scale=0.285]{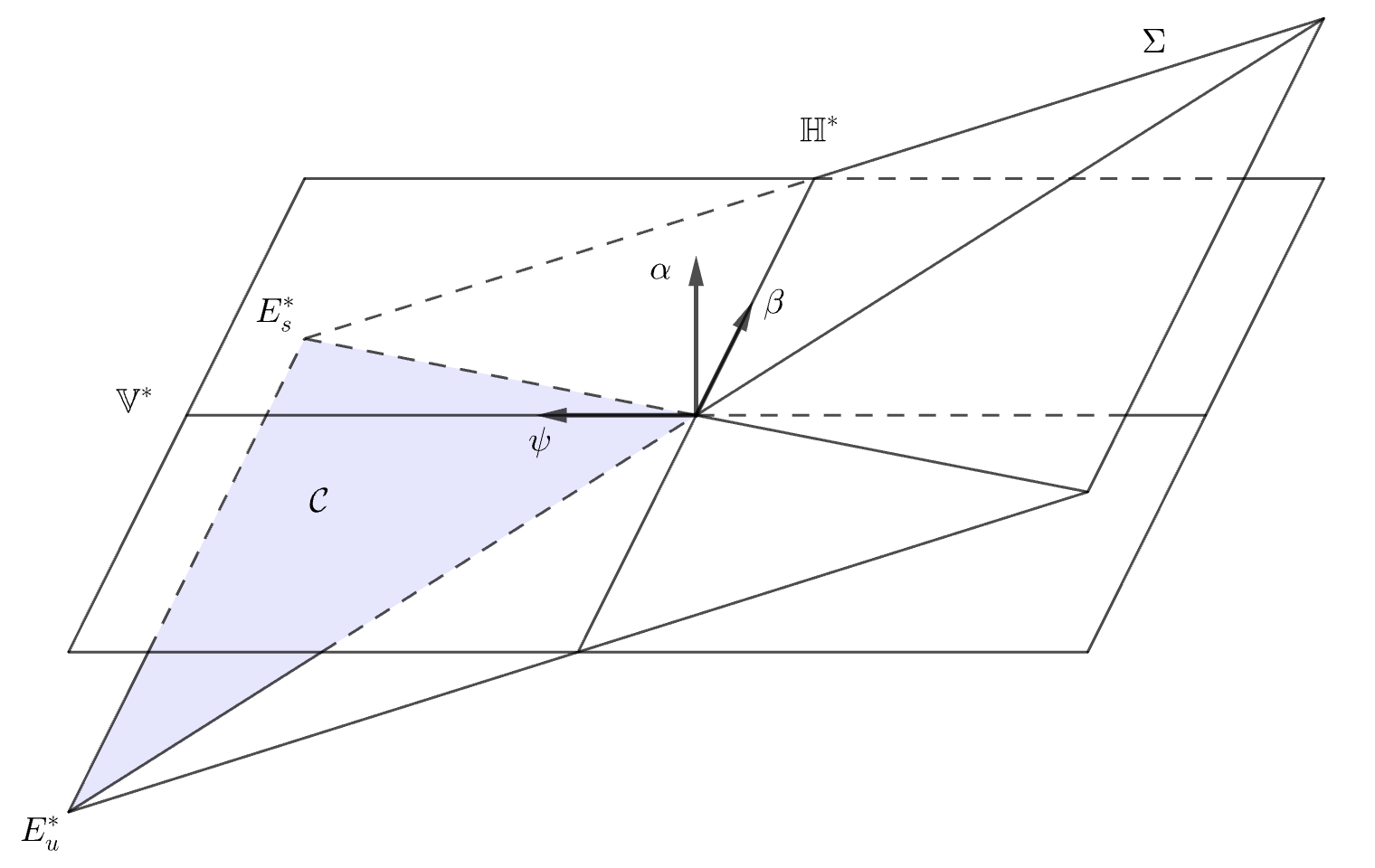}
\caption{The relevant subbundles of the tangent and cotangent bundles.}
\label{figure:subbundles}
\end{figure}
\subsubsection{Tensor tomography}\label{subsection:tensor-tomography} The tensor tomography problem is interesting in its own right, particularly as it pertains to the injectivity of the X-ray transform for thermostats. We will need the following property in the case $n=1$.
\begin{definition}\label{property}
We say that a thermostat $(M, g, \lambda)$  satisfies the \textit{attenuated tensor tomography problem of order $n$} if having $(F+V\lambda)u=f$ with $f, u\in \mathcal{C}^\infty(SM)$ and $f$ of Fourier degree $n\geq 0 $ implies that $u$ is of Fourier degree $\max(n-1,0)$.%, and $u=0$ if $f$ has degree $0$. 
\end{definition}

The term `attenuated' refers to the presence of the divergence $V\lambda$ in the transport equation. Such a term appears for Gaussian thermostats and thermostats of higher Fourier degree, but not for magnetic or geodesic flows.

The fact that geodesic flows satisfy the (attenuated) tensor tomography problem was first proved in negative curvature in \cite{guillemin80} for $n\geq 0$, and then generalized to the Anosov case in \cite{dairbekov03} for $n \leq 1$, \cite{paternain14} for $n\leq 2$ and \cite{guillarmou17} for $n \geq 2$. It was also shown in \cite{dairbekov05} that Anosov magnetic flows satisfy the (attenuated) tensor tomography problem of order $n\leq 1$. 

For thermostats of higher Fourier degree, the non-attenuated and attenuated versions of the tensor tomography problem are different. In \cite{dairbekov07}, it was proved that Gaussian thermostats (potentially mixed with a magnetic component) satisfy the non-attenuated tensor tomography problem of order $n\leq 1$. We instead need the following theorem.

\begin{theorem}\label{theorem:thermostat-tensor-tomography}
Any Gaussian thermostat $(M, g, \theta)$ with $\mathbb{K} <0$ satisfies the attenuated tensor tomography problem of order $1$.
\end{theorem}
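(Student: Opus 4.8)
The plan is to reduce the attenuated tensor tomography problem of order $1$ to a Pestov-type energy identity, adapted to the presence of the divergence term $V\lambda$. Suppose $(F+V\lambda)u=f$ with $f=f_{-1}+f_0+f_1$ of Fourier degree $1$; we must show $u$ has Fourier degree $0$. Writing $\lambda=\ell_1\theta=\lambda_{-1}+\lambda_1$, the operator $F+V\lambda=X+\lambda V + V\lambda$ shifts Fourier modes by at most one, so the equation $(F+V\lambda)u=f$ splits mode-by-mode into a coupled ladder system. The first step is to observe that, since $\mathbb{K}<0$, the thermostat is Anosov (by \cite[Theorem 5.2]{wojtkowski00b}), so solutions to the transport equation with a smooth right-hand side enjoy good regularity and, more importantly, there is a Livšic-type / energy-identity apparatus available. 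The standard approach for $n\le 1$ in the geodesic case \cite{dairbekov03} and the magnetic case \cite{dairbekov05} goes through a Pestov identity together with the Riccati equation for the stable/unstable subbundles, and the claim is that the negativity of $\mathbb{K}$ is exactly the positivity condition needed to run it here.

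Concretely, I would first truncate to the relevant Fourier modes: use the Szegő-type splitting and the commutator relations in \eqref{eq:commutation} together with Lemma \ref{lemma:raising-lowering-equivalence} to transfer the problem to sections of $\kappa^{\otimes k}$, and reduce — as in the geodesic and magnetic cases — to showing that a solution with $f$ of Fourier degree $1$ forces the odd-degree and high-degree modes of $u$ to vanish, leaving only $u_0$ (and possibly $u_{\pm 1}$, which the degree-$1$ right-hand side is allowed to produce, but these then turn out to be determined and in fact the conclusion is $u$ has degree $\le 1$, i.e. $\max(n-1,0)=0$ when $n=1$... wait, $\max(1-1,0)=0$). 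So the real content is: $u_k=0$ for $|k|\ge 1$. The key tool is a Pestov energy identity for the operator $F$ incorporating the attenuation: one computes $\|(F+V\lambda)u\|^2$ in terms of $\|Hu\|^2$, $\|Vu\|^2$, and a curvature term which, after integrating and using Lemma \ref{lemma:divergence} to handle the divergence contributions from integration by parts, produces precisely $\langle \mathbb{K}\,Vu, Vu\rangle$-type terms up to lower-order corrections. Negativity of $\mathbb{K}$ makes this identity coercive on the higher modes, forcing them to vanish.

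I would carry out the steps in this order: (i) record the commutation relations $[V,F]=H+(V\lambda)V$ and the action of $F+V\lambda$ on Fourier modes, isolating the ladder structure; (ii) derive the attenuated Pestov identity on $SM$, carefully tracking every boundary/divergence term using $\mathcal{L}_F\mu=(V\lambda)\mu$, $\mathcal{L}_H\mu=0$, $\mathcal{L}_V\mu=0$; (iii) substitute the thermostat curvature \eqref{eq:general-thermostat-curvature}, which for a Gaussian thermostat reduces to \eqref{eq:thermostat-curvature}, so that the curvature term in the identity becomes a multiple of $\int_{SM}\mathbb{K}\,|Vu|^2$ plus controllable remainders; (iv) invoke $\mathbb{K}<0$ to conclude that all Fourier modes $u_k$ with $|k|\ge 1$ vanish; (v) feed this back into the transport equation to identify what is left. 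Alternatively, if the direct Pestov route is cumbersome, an equivalent path is to use the Anosov property to produce the Riccati solutions $r^{s}, r^u$ of \eqref{eq:pre-riccati}, write the energy identity in terms of $Y^s u$ and $Y^u u$, and exploit $r^s<r^u$ together with the sign of $\mathbb{K}$.

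The main obstacle, and the place where genuinely new work is needed compared with \cite{dairbekov03, dairbekov05}, is that the attenuation term $V\lambda$ is \emph{not} of Fourier degree $0$: since $\lambda=\lambda_{-1}+\lambda_1$, we have $V\lambda=-i\lambda_{-1}+i\lambda_1$, so it couples modes that differ by one, and it produces genuinely new cross terms in the Pestov identity that are absent in the magnetic case. The delicate point will be to show that these extra terms can be absorbed — either by completing a square, or by using the fact that $\lambda$ contributes to $\mathbb{K}$ through the $-H\lambda+\lambda^2+FV\lambda$ terms in \eqref{eq:general-thermostat-curvature} in precisely the way that keeps the identity coercive when $\mathbb{K}<0$. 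In other words, the nontrivial claim is that the \emph{thermostat} curvature $\mathbb{K}$, rather than the Gaussian curvature $K_g$ alone, is exactly the right quantity whose negativity closes the estimate, and verifying that the algebra of the cross terms conspires to give this is the crux of the proof.
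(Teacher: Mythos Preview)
Your approach differs substantially from the paper's, and the step you yourself flag as ``delicate'' is a genuine gap that a bare Pestov identity does not close. The paper does \emph{not} argue via the Pestov identity \eqref{eq:pestov-identity}. Instead it imports the Carleman estimates of \cite[Theorem~3.1]{assylbekov21} (recorded as Theorem~\ref{theorem:carleman}), namely the \emph{weighted} inequality
\[
\sum_{k\geq m} |k|^{2s+1}\Vert u_k\Vert^2\leq \dfrac{1}{\kappa s}\sum_{k\geq m+1} |k|^{2s+1}\Vert (Fu)_k\Vert^2,
\]
and proceeds in two stages: first (Proposition~\ref{theorem:finite-implies-finite}) one shows that any smooth solution of $(F+V\lambda)u=f$ with $f$ of finite degree must itself have finite Fourier degree, by choosing $s$ large enough that the factor $1/(\kappa s)$ beats the mode shift $(Fu)_k=-i\lambda_1 u_{k-1}+i\lambda_{-1}u_{k+1}$ (using $(|k|+1)^{2s+1}\leq e|k|^{2s+1}$ for $|k|\geq 2s+1$); second, once $u$ has finite degree, the top modes $u_{\pm k}$ are killed by an algebraic argument invoking \cite[Proposition~6.1]{assylbekov17}, yielding the contradiction.

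The reason the unweighted Pestov identity is insufficient is exactly the point you raise but do not resolve: in the attenuated problem $Fu=f-(V\lambda)u$, and since $V\lambda$ has Fourier degree $1$, the right-hand side contains \emph{all} Fourier modes of $u$, shifted by one. Hence $\|VFu\|^2$ on the left of \eqref{eq:pestov-identity} is not a priori controlled by the data $f$, and no amount of negativity of $\mathbb{K}$ on the right lets you read off $u_k=0$ for $|k|\geq 1$ directly. The Carleman weight $|k|^{2s+1}$ with a large free parameter $s$ is precisely the device that absorbs this one-step shift; your hope that ``the algebra of the cross terms conspires'' to do so at the level of the unweighted identity is unsubstantiated, and the fact that \cite{dairbekov07} establishes only the \emph{non}-attenuated order-$1$ problem for Gaussian thermostats is evidence that it does not.
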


This result is a consequence of the work in \cite{assylbekov21}. Their argument relies heavily on the negative thermostat curvature assumption. In particular, most of the heavy lifting is done by \cite[Theorem 3.1]{assylbekov21}, where the \textit{Carleman estimates} for Gaussian thermostats with negative curvature are established (akin to the work in \cite{paternain23}).

\begin{theorem}\label{theorem:carleman}
Let $(M, g, \theta)$ be a Gaussian thermostat with $\mathbb{K}\leq -\kappa$ for some $\kappa>0$. For any integer $m\geq 1$ and parameter $s>0$, we have
$$\sum_{k\geq m} |k|^{2s+1}\Vert u_k\Vert^2\leq \dfrac{1}{\kappa s}\sum_{k\geq m+1} |k|^{2s+1}\Vert (Fu)_k\Vert^2$$
for all $u\in \mathcal{C}^\infty(SM)$. 
\end{theorem}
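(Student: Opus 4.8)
This estimate is essentially \cite[Theorem 3.1]{assylbekov21}; to prove it from scratch I would follow the Carleman scheme of \cite{paternain23}, in two steps. \textbf{Step 1 (ladder splitting and the basic inequality).} Since $\lambda=\ell_1\theta$ has Fourier degree $1$ with $\lambda_{-1}=\overline{\lambda_1}$, the generator splits as $F=F_++F_-$ with $F_\pm\colon\Omega_k\to\Omega_{k\pm1}$, namely $F_\pm=\eta_\pm+\lambda_{\pm1}V$. Using $X^\ast=-X$, $H^\ast=-H$, $V^\ast=-V$ together with $\lambda_{-1}=\overline{\lambda_1}$ one computes $F_+^\ast=-F_-+i\lambda_{-1}$, whence $F_+^\ast F_+-F_-^\ast F_-=[F_+,F_-]+i\lambda_{-1}F_++i\lambda_1F_-$. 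Expanding the commutator by the Leibniz rule, the structure relation $[X,H]=\pi^\ast(K_g)V$ and $[\eta_\pm,V]=\mp i\eta_\pm$, all first-order terms and all degree-$\pm2$ contributions cancel, and comparison with \eqref{eq:general-thermostat-curvature} identifies the remainder as the thermostat curvature: for $w\in\Omega_k$,
\[
\Vert F_+w\Vert^2-\Vert F_-w\Vert^2=-\tfrac{k}{2}\langle\mathbb{K}w,w\rangle.
\]
Since $|w|^2$ is constant on the fibres of $SM$, only the zeroth Fourier mode of $\mathbb{K}$ enters this pairing, so $\mathbb{K}\le-\kappa$ yields, for $k\ge0$ and $w\in\Omega_k$, the \emph{basic inequality}
\[
\Vert F_-w\Vert^2\le\Vert F_+w\Vert^2-\tfrac{\kappa k}{2}\Vert w\Vert^2,\qquad\Vert F_+w\Vert^2\ge\tfrac{\kappa k}{2}\Vert w\Vert^2.
\]
This is the only place the hypotheses are used; it is also where being a Gaussian thermostat is essential, for if $\lambda$ has higher Fourier degree then $F$ does not split into a single raising plus a single lowering operator and the identity $F_+^\ast=-F_-+i\lambda_{-1}$ fails.

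\textbf{Step 2 (weighted summation).} Fix $u\in\mathcal{C}^\infty(SM)$; the Fourier modes of $u$ and of $Fu$ decay rapidly, so all series below converge absolutely and may be re-indexed freely. From $(Fu)_j=F_+u_{j-1}+F_-u_{j+1}$ and the inequality $\Vert(Fu)_j\Vert^2\ge(1-\delta_j)\Vert F_+u_{j-1}\Vert^2-(\delta_j^{-1}-1)\Vert F_-u_{j+1}\Vert^2$ for $\delta_j\in(0,1)$, I would multiply by $b_j:=j^{2s+1}$, sum over $j\ge m+1$, re-index ($k=j-1$ in the first term, $k=j+1$ in the second), and apply the basic inequality twice: once to replace $\Vert F_-u_k\Vert^2$ and once to replace the leftover $\Vert F_+u_k\Vert^2$ by $\tfrac{\kappa k}{2}\Vert u_k\Vert^2$. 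Provided the intermediate coefficients $C_k:=b_{k+1}(1-\delta_{k+1})-b_{k-1}(\delta_{k-1}^{-1}-1)$ (for $k\ge m+2$) are nonnegative, the coefficient of $\Vert u_k\Vert^2$ then combines to $\tfrac{\kappa}{2}\,k\,b_{k+1}(1-\delta_{k+1})$ for every $k\ge m$. The choice $\delta_j:=1-\tfrac{2s\,b_{j-1}}{(j-1)b_j}$ makes $\tfrac{\kappa}{2}\,k\,b_{k+1}(1-\delta_{k+1})=\kappa s\,b_k$, so the proof comes down to two scalar inequalities: $\delta_j\in(0,1)$, i.e.\ $2s(j-1)^{2s}<j^{2s+1}$, which holds for $j\ge2$ since $\sup_{s>0}2s\,r^{2s}=-\tfrac{1}{e\ln r}<j$ for $r=(j-1)/j$ (using $\ln(1+x)>x/(1+x)$); and $C_k\ge0$, which after $k(k-2)=(k-1)^2-1$ reduces to $k^{2s}-(k-2)^{2s}\ge2s(k-1)^{2s-1}$ for $k\ge3$, provable by the mean value theorem with the cases $0<s<\tfrac{1}{2}$, $\tfrac{1}{2}\le s<1$ and $s\ge1$ handled separately according to the monotonicity and convexity of $t\mapsto2st^{2s-1}$. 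Since $m\ge1$, throughout one has $k\ge m+2\ge3$ and $j\ge m+1\ge2$, so both inequalities hold and summing gives $\sum_{j\ge m+1}b_j\Vert(Fu)_j\Vert^2\ge\kappa s\sum_{k\ge m}b_k\Vert u_k\Vert^2$, which is the claim.

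I expect the main obstacle to be the constant-tracking in Step 2: the weights $\delta_j$ must be tuned so that the $\Vert F_+u_k\Vert^2$ terms drop out with a nonnegative coefficient while still leaving \emph{exactly} the factor $\kappa s$ in front of $\Vert u_k\Vert^2$, and the resulting inequalities for powers of integers, though elementary, are tight for small $k$ and large $s$ — which is precisely why the hypothesis $m\ge1$ enters. By comparison, the commutator computation in Step 1 is routine once one verifies that the Fourier-degree-$1$ structure produces exactly the cancellations recovering $\mathbb{K}$.
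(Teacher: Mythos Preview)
The paper does not give its own proof of this theorem: it states the estimate and attributes it to \cite[Theorem~3.1]{assylbekov21}, noting only that the argument is ``akin to the work in \cite{paternain23}''. Your proposal correctly identifies this provenance and then supplies precisely the proof that the cited reference contains: the ladder identity $\Vert F_+w\Vert^2-\Vert F_-w\Vert^2=-\tfrac{k}{2}\langle\mathbb{K}w,w\rangle$ on $\Omega_k$ (your Step~1), followed by the weighted-summation Carleman scheme with Young's inequality and tuned parameters $\delta_j$ (your Step~2). Both steps are carried out correctly; in particular your adjoint computation $F_+^\ast=-F_-+i\lambda_{-1}$ is right, and the scalar inequalities you isolate for $\delta_j\in(0,1)$ and $C_k\ge0$ are indeed the crux of the constant-tracking and hold for the ranges $j\ge2$, $k\ge3$ forced by $m\ge1$. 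So your proposal is not merely consistent with the paper's approach --- it \emph{is} the approach the paper defers to, spelled out in more detail than the paper itself provides.
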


The rest of the argument is then relatively straightforward for our case, which is less general than the one tackled in \cite{assylbekov21}. We include it here for the sake of completeness, but also to show how it can be simplified.

\begin{proposition}\label{theorem:finite-implies-finite}
Let $(M, g, \theta)$ be a Gaussian thermostat with $\mathbb{K}<0$. Suppose that $f\in \mathcal{C}^\infty(SM)$ has finite Fourier degree and $u\in \mathcal{C}^\infty(SM)$ satisfies $(F+V\lambda)u=f$. Then, the function $u$ also has finite Fourier degree. 
\end{proposition}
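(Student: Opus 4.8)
The statement to prove is Proposition~\ref{theorem:finite-implies-finite}: if $(M,g,\theta)$ is a Gaussian thermostat with $\mathbb K<0$, $f\in\mathcal C^\infty(SM)$ has finite Fourier degree, and $(F+V\lambda)u=f$ with $u\in\mathcal C^\infty(SM)$, then $u$ has finite Fourier degree. The natural engine here is the Carleman estimate of Theorem~\ref{theorem:carleman}, which controls high Fourier modes of a smooth function by high Fourier modes of its image under $F$. The complication is that the transport operator in our equation is $F+V\lambda$, not $F$, and since $\lambda=\ell_1\theta=\lambda_{-1}+\lambda_1$ has Fourier degree $1$, the term $V\lambda$ also has Fourier degree $1$; so $(F+V\lambda)u$ mixes each mode $u_k$ with the neighbouring modes $u_{k\pm1}$ via multiplication by $V\lambda$. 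The plan is therefore to first record how $(F+V\lambda)u$ decomposes in Fourier modes, isolating that for $|k|$ large we have $((F+V\lambda)u)_k = (Fu)_k + (\text{bounded operator})(u_{k-1}) + (\text{bounded operator})(u_{k+1})$, where the bounded operators are just multiplication by the two Fourier components of $V\lambda$, hence bounded on $L^2$ uniformly in $k$.

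**Main steps.** First I would fix $\kappa>0$ with $\mathbb K\le-\kappa$ (possible by compactness) and let $N$ be the Fourier degree of $f$. For $k>N$ we have $f_k=0$, so $(Fu)_k = -((V\lambda)u)_k = -(\lambda_1)'\,\eta\text{-type terms}$— more precisely $(Fu)_k = -(V\lambda)_{1}\cdot u_{k-1}-(V\lambda)_{-1}\cdot u_{k+1}$ (using that $V\lambda$ has only modes $\pm1$, and that $u$ is fixed). The key inequality is: for any $m>N$ and any $s>0$,
\begin{equation*}
\sum_{k\ge m}|k|^{2s+1}\Vert u_k\Vert^2 \le \frac{1}{\kappa s}\sum_{k\ge m+1}|k|^{2s+1}\Vert (Fu)_k\Vert^2 \le \frac{C}{\kappa s}\sum_{k\ge m}|k|^{2s+1}\Vert u_k\Vert^2,
\end{equation*}
where $C$ depends only on $\Vert V\lambda\Vert_{L^\infty}$-type quantities (coming from estimating $\Vert(Fu)_k\Vert\le \Vert(V\lambda)_1\Vert_\infty\Vert u_{k-1}\Vert + \Vert(V\lambda)_{-1}\Vert_\infty\Vert u_{k+1}\Vert$ and re-indexing the sum, absorbing the $|k|^{2s+1}$ versus $|k\pm1|^{2s+1}$ discrepancy into $C$). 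Now choose $s$ large enough that $C/(\kappa s)<1$; then the inequality forces $\sum_{k\ge m}|k|^{2s+1}\Vert u_k\Vert^2=0$, hence $u_k=0$ for all $k\ge m$. Taking $m=N+1$ kills all sufficiently positive modes. For the negative modes one either repeats the argument using the flip $(x,v)\mapsto(x,-v)$ — which for a Gaussian thermostat conjugates the flow to its reverse and sends $\Omega_k$ to $\Omega_{-k}$ and $\theta$ to $-\theta$, so $\mathbb K$ is unchanged and the reversed thermostat is again Anosov with negative thermostat curvature — or one notes that the Carleman estimate of Theorem~\ref{theorem:carleman} (as stated for $k\ge m$) has a mirror statement for $k\le -m$ by the same reversibility. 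Either way, $u_k=0$ for $|k|$ large, i.e. $u$ has finite Fourier degree.

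**Main obstacle.** The delicate point is the uniformity of the constant $C$ in the re-indexing $\sum_{k}|k|^{2s+1}\Vert u_{k\pm1}\Vert^2 \le C\sum_{k}|k|^{2s+1}\Vert u_k\Vert^2$: one must make sure that $C$ can be taken independent of $s$ (or at worst grows slower than linearly in $s$), so that choosing $s$ large genuinely makes $C/(\kappa s)<1$. Since $|k|^{2s+1}/|k\pm1|^{2s+1}=(1\mp 1/k)^{-(2s+1)}\to e^{\pm(2s+1)/k}$-ish, this ratio is \emph{not} bounded uniformly in $s$ for small $k$, but it \emph{is} bounded — say by $2$ — once $k\ge 2s+1$, say. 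So the correct move is: first restrict to modes $k\ge m$ with $m$ chosen larger than some threshold depending on $s$ (e.g. $m>4s$), on which range the re-indexing constant is harmless, run the absorption argument there to conclude $u_k=0$ for $k\ge m$, and then — since $m$ was a fixed finite number — there are only finitely many remaining modes, which is exactly the finite-Fourier-degree conclusion. In other words, one does \emph{not} need to kill modes down to $N+1$; killing all modes beyond \emph{some} finite index suffices, and that is cleanly achievable. The rest is bookkeeping: reversibility for the negative modes, and checking that $(V\lambda)_{\pm1}$ are smooth bounded functions (immediate since $\lambda\in\mathcal C^\infty(SM)$ has Fourier degree $1$, so $V\lambda$ does too and its modes are smooth).
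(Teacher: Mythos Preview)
Your proposal is correct and follows essentially the same route as the paper's proof: compute $(Fu)_k$ in terms of $u_{k\pm1}$ for $|k|$ beyond the degree of $f$, feed this into the Carleman estimate of Theorem~\ref{theorem:carleman}, and absorb the right-hand side into the left by choosing $s$ large and then restricting to modes $|k|\ge m$ with $m$ large relative to $s$ so that the re-indexing ratio $(|k|+1)^{2s+1}/|k|^{2s+1}$ is uniformly bounded (the paper takes $m\ge 2s+1$ and bounds the ratio by $e$, then picks $s>eC/\kappa$). The paper treats positive and negative modes simultaneously by writing sums over $|k|\ge m$, implicitly using the same reversibility symmetry you invoke explicitly via the flip.
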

\begin{proof}
We follow the argument from \cite[Theorem 5.1]{assylbekov21}. Let $m'\geq 0$ be the Fourier degree of $f$. Since $(F+V\lambda)u=f$, we obtain
$$(Fu)_k=-i\lambda_{1}u_{k-1}+i\lambda_{-1}u_{k+1} \qquad \text{for all } |k|\geq m'+1.$$
As a result, there exists $C>0$ such that
$$\Vert(Fu)_k \Vert^2\leq C(\Vert u_{k-1}\Vert^2+\Vert u_{k+1}\Vert^2) \qquad \text{for all } |k|\geq m'+1.$$
Pick $\kappa>0$ such that $\mathbb{K}\leq -\kappa$, fix $s>eC/\kappa$, and let $m\geq \max(2s+1, m'+1)$. We can apply Theorem \ref{theorem:carleman} to get
\begin{equation*}
\begin{alignedat}{1}
\sum_{|k|\geq m} |k|^{2s+1}\Vert u_k\Vert^2 &\leq \dfrac{C}{\kappa s}\sum_{|k|\geq m+1}|k|^{2s+1}(\Vert u_{k-1}\Vert^2 +\Vert u_{k+1}\Vert^2)\\
&\leq \dfrac{C}{\kappa s}\sum_{|k|\geq m}(|k|+1)^{2s+1} \Vert u_{k}\Vert^2.
\end{alignedat}
\end{equation*}
Since $m\geq 2s+1$, we note that
$$(|k|+1)^{2s+1}=\left(1+\dfrac{1}{|k|}\right)^{2s+1}|k|^{2s+1}\leq \left(1+\dfrac{1}{|k|}\right)^{|k|}|k|^{2s+1}\leq e |k|^{2s+1}\quad \text{for all } |k| \geq m,$$
so that
\begin{equation*}
\begin{alignedat}{1}
\sum_{|k|\geq m} |k|^{2s+1}\Vert u_k\Vert^2 
&\leq \dfrac{eC}{\kappa s}\sum_{|k|\geq m}|k|^{2s+1} \Vert u_{k}\Vert^2.
\end{alignedat}
\end{equation*}
It hence follows that
\begin{equation*}
\begin{alignedat}{1}
\left(1-\dfrac{eC}{\kappa s}\right)\sum_{|k|\geq m} |k|^{2s+1}\Vert u_k\Vert^2 
&\leq 0.
\end{alignedat}
\end{equation*}
However, we have $1-eC/(\kappa s)>0$ by design, so $u_k=0$ for all $|k|\geq m$. 
\end{proof}

\begin{proof}[Proof of Theorem \ref{theorem:thermostat-tensor-tomography}]
By Proposition \ref{theorem:finite-implies-finite}, we know that $u$ is of finite Fourier degree. Suppose, for the sake of contradiction, that $u$ is of degree $k\geq 1$. Then, using the equation $(F+V\lambda)u=f$, we have
$$\left(\eta_+ +\left(1+\dfrac{1}{k}\right)\lambda_1V\right)u_k=(\eta_+ + \lambda_1 V + i\lambda_1)u_k = 0$$
and 
$$\left(\eta_- +\left(1+\dfrac{1}{k}\right)\lambda_{-1}V\right)u_{-k}=(\eta_- + \lambda_{-1} V -i\lambda_{-1})u_{-k} = 0.$$
By \cite[Proposition 6.1]{assylbekov17}, it follows that $u_{\pm k}=0$, which is a contradiction.% It follows that $u$ must be of Fourier degree $0$. 
\end{proof}

Finally, the proofs of our results rely on the possibility of lifting arbitrary holomorphic $1$-forms to solutions of the transport equation. As explained in Appendix \ref{appendix:A}, where we have relegated most of the work on this front, this is again related to the injectivity of the X-ray transform for thermostats. 

\begin{theorem}\label{theorem:extension-holomorphic}
Let $(M, g, \lambda)$, with $\lambda$ of Fourier degree $\leq 1$, be an Anosov thermostat. For any holomorphic (respectively anti-holomorphic) $1$-form $\tau$ on $M$, there exists a distribution $u\in H^{-1}(SM)$ with $u_k=0$ for all $k\leq 0$ (respectively $k\geq 0$) such that $(F+V\lambda)u=0$ and $u_1 = \pi_1^*\tau$ (respectively $u_{-1}=\pi_{-1}^*\tau$). 
\end{theorem}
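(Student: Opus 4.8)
The plan is to follow the standard hyperbolic-dynamics pairing argument used to produce flow-invariant distributions with prescribed Fourier modes (as in the geodesic case, e.g. Paternain--Salo--Uhlmann), adapted here to the attenuated transport operator $F + V\lambda$ and to the fact that the relevant adjoint is $F^* = -(F+V\lambda)$. I treat the holomorphic case; the anti-holomorphic case is symmetric via the flip. Let $\tau$ be a holomorphic $1$-form, so that $w \coloneqq \pi_1^*\tau \in \Omega_1 \cap \ker\eta_-$ by Lemma \ref{lemma:raising-lowering-equivalence}. I want $u \in H^{-1}(SM)$ with $(F+V\lambda)u = 0$, only non-negative Fourier modes, and $u_1 = w$.

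First I would set up the inhomogeneous problem: seek $u = w + h$ where $h$ is to be constructed so that $(F+V\lambda)u = 0$, i.e. $(F+V\lambda)h = -(F+V\lambda)w =: f$. Since $w \in \Omega_1$, a direct computation using $X = \eta_+ + \eta_-$, $\eta_- w = 0$, and $Vw = iw$ shows $f = -\eta_+ w - \lambda_0$-type terms; because $\lambda = \lambda_{-1}+\lambda_1$ has Fourier degree $1$, $f$ has Fourier modes only in degrees $0$ and $2$ (the $\eta_+ w$ piece sits in $\Omega_2$, the $\lambda_{-1}$ and $V\lambda$ contributions in degree $0$, etc.), so in particular $f$ has finite Fourier degree and $f_{\le -1} = 0$. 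The task is then to solve $(F+V\lambda)h = f$ with $h$ having only non-negative Fourier modes and with the additional requirement $h_1 = 0$ so that $u_1 = w$ is preserved; one then subtracts off the degree-$1$ part using the attenuated tensor tomography input if needed. Here is where one invokes the resolvent/Livšic-type machinery: since the flow of $F$ is Anosov, the operator $F+V\lambda$ (equivalently, the transfer operator for the time-changed flow) has a meromorphic resolvent $R_\pm(z) = (F + V\lambda \pm z)^{-1}$ acting on suitable anisotropic Sobolev spaces, with poles (Pollicott--Ruelle resonances) only in $\mathrm{Re}\, z \le 0$ for $R_+$; the value $R_+(0)f$, or rather the holomorphic part of it at $z=0$, produces a distribution $h \in H^{-1}(SM)$ solving $(F+V\lambda)h = f$ up to a resonant state at $0$. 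The wavefront set of $h$ is contained in $E_u^*$, the unstable conormal.

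The mechanism for controlling the Fourier support of $h$ is the key point, and it is exactly where the hypothesis $\lambda$ of Fourier degree $\le 1$ enters (as the paper emphasizes — see the no-go Lemma \ref{lemma:no-go-lemma}). The idea is to apply the Szegő projector $S_+$: using the commutator relations \eqref{eq:commutation} one computes $(F+V\lambda)S_+ h$ in terms of $(F+V\lambda)h = f$ together with the commutator $[S_+, F+V\lambda]h$, which only involves the modes $h_{-1}, h_0$ of $h$ and the low modes of $\lambda$. Because $\lambda$ has Fourier degree $\le 1$, this commutator stays inside the finitely many "boundary" modes, and one can arrange — by first solving on a space where the resolvent is applied to $S_+ f = f$ and exploiting that the characteristic set splits as $\Sigma = E_s^* \oplus E_u^*$ — that the constructed solution genuinely satisfies $S_+ h = h$. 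Concretely, I expect the construction to go: produce a solution $h$ with $\mathrm{WF}(h)\subset E_u^*$ by the resolvent, then show $S_- h$ solves a transport equation with smooth (finite-Fourier-degree) right-hand side, so by Theorem \ref{theorem:thermostat-tensor-tomography} (attenuated tensor tomography of order $1$) applied after a regularity bootstrap, $S_- h$ has small finite Fourier degree; a final direct linear-algebra elimination in the bottom modes, using that the only finite-Fourier-degree solutions of the homogeneous equation are zero (again by Theorem \ref{theorem:thermostat-tensor-tomography}), removes the unwanted modes and fixes $h_1 = 0$.

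The main obstacle — and the reason Appendix \ref{appendix:A} exists — is precisely showing that the solution can be taken fibrewise holomorphic, i.e. that $S_+ h = h$, with no leakage into negative Fourier modes. For geodesic flows this is classical because $F = X$ is odd under the flip and $\lambda = 0$ makes the commutator $[S_+, X]$ land in degrees $\{-1, 0\}$ with a clean structure; for a Gaussian thermostat the extra terms $\lambda_{\pm 1}V$ couple modes and one must check that the resolvent construction, the wavefront control via $E_u^* \cap \mathbb{H}^* = \{0\}$ (equation \eqref{eq:transversality-cotangent}), and the Fourier-degree bookkeeping are all mutually compatible. The negative thermostat curvature is used implicitly here through Theorem \ref{theorem:thermostat-tensor-tomography}. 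Once $h$ is in hand, setting $u = w + h$ and noting $u \in H^{-1}(SM)$, $(F+V\lambda)u = 0$, $u_{\le 0} = 0$, and $u_1 = w = \pi_1^*\tau$ completes the proof.
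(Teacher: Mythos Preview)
Your approach differs substantially from the paper's, and it carries a genuine gap.

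The paper's proof does not use the resolvent or anisotropic Sobolev spaces, and crucially it does \emph{not} invoke tensor tomography. Instead it first applies Theorem~\ref{theorem:extension-forms} (proved in Appendix~\ref{appendix:A} via the Pestov identity and the $\alpha$-controlled estimate of Theorem~\ref{theorem:alpha-regularity}, both of which hold for \emph{any} Anosov thermostat): since $a \coloneqq \pi_1^*\tau$ lies in $\Omega_1$ with $\eta_- a = 0$, the solenoidal condition $\eta_+ a_{-1} + \eta_- a_1 = 0$ is automatic, and one obtains $v \in H^{-1}(SM)$ with $(F+V\lambda)v = 0$, $v_0 = 0$, and $v_{-1} + v_1 = a$. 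Because $a \in \Omega_1$ this forces $v_{-1} = 0$. One then sets $u \coloneqq S_+ v$ and checks, by a direct Fourier-mode computation using only that $v_{-1} = v_0 = 0$ and that $\lambda$ has degree $\le 1$, that $(F+V\lambda)u = 0$ with $u_1 = v_1 = \pi_1^*\tau$. No regularity bootstrap, no microlocal resolvent, no tensor tomography; Appendix~\ref{appendix:A} is about the Pestov-based extension for solenoidal $1$-forms, not about enforcing fibrewise holomorphicity.

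The gap in your argument is the reliance on Theorem~\ref{theorem:thermostat-tensor-tomography} to kill the negative Fourier modes of $h$. That theorem requires $\mathbb{K} < 0$, whereas Theorem~\ref{theorem:extension-holomorphic} is stated for arbitrary Anosov thermostats with $\lambda$ of Fourier degree $\le 1$; for Gaussian thermostats the attenuated tensor tomography of order~$1$ is \emph{not} known in the general Anosov case (cf.\ the discussion in \S\ref{subsection:tensor-tomography}). You acknowledge this (``the negative thermostat curvature is used implicitly here''), but that means your route proves only a strictly weaker statement than the one claimed. There are also secondary issues left unresolved: the resolvent $R_+(z)$ may have a pole at $z = 0$ for a non-volume-preserving flow, the ``regularity bootstrap'' needed to make $S_- h$ smooth enough to feed into tensor tomography is not supplied, and the mechanism forcing $h_1 = 0$ is only gestured at. The paper sidesteps all of this: because Theorem~\ref{theorem:extension-forms} already delivers a homogeneous solution with $v_{-1} = v_0 = 0$, applying the Szeg\H{o} projector is lossless and the fibrewise-holomorphic structure comes for free once $\lambda$ has degree $\le 1$.
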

\begin{proof}
Let us treat the case where the $1$-form $\tau$ is holomorphic. The anti-holomorphic case is completely analogous. Using Lemma \ref{lemma:raising-lowering-equivalence}, we know that $\pi_1^*\tau \in \Omega_1$ is in the kernel of $\eta_-$. We can hence apply Theorem \ref{theorem:extension-forms}, which tells us that there exists $v\in H^{-1}(SM)$ with $v_0=0$ such that $(F+V\lambda)v=0$ and $v_{-1}+v_1= \pi_1^*\tau$. Since $\pi_1^*\tau \in \Omega_1$, we actually have $v_{-1}=0$. We project the distribution $v$ onto its positive Fourier modes to get $u\coloneqq S_+ v=\sum_{k\geq 1}v_k$. 
For all $k\in \Z$, we then get
$$((F+V\lambda)u)_k=\eta_+u_{k-1}+\eta_-u_{k+1}+ik(\lambda_1 u_{k-1}+\lambda_0 u_k+\lambda_{-1}u_{k+1})=0,$$
which entails that $(F+V\lambda)u=0$. 
\end{proof}

We note that we cannot hope to get such a result for an arbitrary $\lambda\in \mathcal{C}^\infty(SM, \R)$. 

\begin{lemma}\label{lemma:no-go-lemma}
Suppose that $\lambda=\lambda_m+\lambda_{-m}$, where $m\geq 2$ and $\eta_- \lambda_{-m}=0$. Since $\lambda_{-m}$ has isolated zeroes, there exists $a\in \Omega_1$ with $\eta_- a = 0$ and $\lambda_{-m}a\neq 0$. Then, there is no $u\in H^{-1}(SM)$ with $u_k=0$ for all $k\leq 0$ such that $(F+V\lambda)u=0$ and $u_1 = a$. 
\end{lemma}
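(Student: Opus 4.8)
The plan is to obtain a contradiction by reading off one carefully chosen negative Fourier mode of the transport equation, exploiting that $u$ has no Fourier modes in degree $\le 0$. First I would record the Fourier-mode form of the transport operator for $\lambda=\lambda_m+\lambda_{-m}$: writing $F+V\lambda=X+V(\lambda\,\cdot\,)$ and using $(Xu)_k=\eta_+u_{k-1}+\eta_-u_{k+1}$, $(V(\lambda u))_k=ik(\lambda u)_k$ and $(\lambda u)_k=\lambda_m u_{k-m}+\lambda_{-m}u_{k+m}$, one gets, for every $k\in\Z$,
\begin{equation*}
\big((F+V\lambda)u\big)_k=\eta_+u_{k-1}+\eta_-u_{k+1}+ik\big(\lambda_m u_{k-m}+\lambda_{-m}u_{k+m}\big),
\end{equation*}
which is the same computation as in the proof of Theorem~\ref{theorem:extension-holomorphic}, now specialized to a $\lambda$ of Fourier degree $m$. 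This identity makes sense for $u\in H^{-1}(SM)$ because $\eta_\pm$, $V$, multiplication by the smooth functions $\lambda_{\pm m}$, and the Fourier projections all act continuously on $\mathcal{D}'(SM)$.

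Next I would evaluate the equation $(F+V\lambda)u=0$ at the index $k=1-m$. Since $m\ge 2$, the three indices $k-1=-m$, $k+1=2-m$ and $k-m=1-2m$ are all $\le 0$, so the hypothesis $u_j=0$ for $j\le 0$ annihilates $\eta_+u_{k-1}$, $\eta_-u_{k+1}$ and $\lambda_m u_{k-m}$. As $k+m=1$, what remains is the single equation $i(1-m)\lambda_{-m}u_1=0$. Because $m\ge 2$ we have $1-m\neq 0$, so $\lambda_{-m}u_1=0$; but $\lambda_{-m}$ and $u_1=a$ are both smooth, so this is an equality of smooth functions on $SM$, forcing $\lambda_{-m}a\equiv 0$ — contradicting the defining property of $a$. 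This would complete the proof.

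For completeness I would add a line justifying the existence of $a$: by Lemma~\ref{lemma:raising-lowering-equivalence} the space $\Omega_1\cap\ker\eta_-$ consists precisely of the restrictions $\pi_1^*\tau$ of $J$-holomorphic $1$-forms $\tau$, which is non-zero as soon as $M$ has positive genus; moreover $\lambda_{-m}\in\Omega_{-m}$ with $\eta_-\lambda_{-m}=0$ is the restriction to $SM$ of a differential on $M$ with isolated zeros, so $\lambda_{-m}$ vanishes on at most finitely many fibres of $\pi$, and likewise for $\pi_1^*\tau$; hence $\lambda_{-m}\pi_1^*\tau\not\equiv 0$ for any $\tau\neq 0$, and one may take $a=\pi_1^*\tau$. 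I do not expect a genuine obstacle here: the argument is essentially a one-line consequence of the Fourier-mode identity. The only points that need care are checking that the mode-wise manipulation is legitimate at the level of $H^{-1}$-distributions (not just smooth functions) and pinning down the sense in which $\lambda_{-m}$, and hence $a$, has only finitely many zeros on $SM$ — which is where the isolated-zeros hypothesis on $\lambda_{-m}$ enters.
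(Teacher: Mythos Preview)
Your proof is correct and follows exactly the same approach as the paper: compute the $k$th Fourier mode of $(F+V\lambda)u$, set $k=1-m$, and use the vanishing of $u_j$ for $j\le 0$ to reduce to $i(1-m)\lambda_{-m}a=0$, a contradiction. Your additional remarks on the distributional legitimacy of the mode-wise computation and on the existence of $a$ are fine elaborations, but the core argument is identical to the paper's.
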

\begin{proof}
Suppose that such a distribution $u$ exists. For any $k\in \Z$, we must have 
$$0=((F+V\lambda)u)_k = \eta_+ u_{k-1}+\eta_-u_{k+1}+ik(\lambda_m u_{k-m}+\lambda_{-m}u_{k+m}).$$
Therefore, applying this identity to $k=-m+1$, we get $\lambda_{-m}a=\lambda_{-m}u_1=0$, which is a contradiction.% which contradicts the fact that $\lambda_{-m}a\neq 0$. 
\end{proof}
\section{Action on holomorphic differentials}\label{section:action-holomorphic-differentials}

We have seen that by passing through a specific type of $2$-currents instead of directly using the pullback $\phi^*$, the linear map $\Phi: \mathcal{D}'_{\text{tr}}(SM)\to \mathcal{D}'_{\text{tr}}(\widetilde{S}M)$ defined through the diagram \eqref{eq:definition-phi} sends distributional solutions to the transport equation of one thermostat to those of the second. In this section, we want to show that $\Phi$ can also be seen as acting on holomorphic differentials from one complex surface to another when $\lambda$ is of Fourier degree $\leq 1$ and the attenuated tensor tomography problem of order $1$ is satisfied.
\subsection{Action on fibrewise holomorphic distributions} We start by studying the action of $\Phi$ on the subspace $\mathcal{D}'_{\text{tr}, +}(SM)$ defined in equation \eqref{eq:invariant-fibrewiseholomorphic}. This will require some microlocal analysis. 

We introduce $\mathcal{C}\subset \left\{(v, \xi)\in T^*(SM)\mid \xi(F(v))=0\right\}$, the closed cone enclosed by $E_s^\ast$ and $E_u^\ast$ in the half-space $\left\{(v, \xi)\in T^*(SM)\mid\xi(V(v))\geq 0\right\}$. See Figure \ref{figure:subbundles}. 
\begin{lemma}\label{lemma:wavefrontset}
Let $(M, g, \lambda)$ be an Anosov thermostat. If $u\in \mathcal{D}'_{\textup{tr},+}(SM)$, then we have $\textup{WF}(u)\subset  \mathcal{C}$ and $u_k \in \Omega_k$ for all $k\in \Z$. 
\end{lemma}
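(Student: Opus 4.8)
The plan is to determine $\textup{WF}(u)$ from three microlocal inputs---elliptic regularity, the microlocal triviality of the Szegő projector, and propagation of singularities along the thermostat flow---and then to read off the smoothness of the Fourier modes $u_k$ from the resulting wavefront bound.

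First, since $(F+V\lambda)u=0$ and the first-order operator $F+V\lambda$ is elliptic away from its characteristic set $\Sigma$, microlocal elliptic regularity (cf.\ \cite[\S 8.3]{hormander03}) gives $\textup{WF}(u)\subset\Sigma$. Next I would use $S_+u=u$: the circle action generated by $V$ is free, so the Szegő projector $S_+$ is microlocally the identity on $\{\xi(V)>0\}$ and microlocally smoothing on $\{\xi(V)<0\}$; since $\textup{WF}(u)=\textup{WF}(S_+u)$, this forces $\textup{WF}(u)\subset\{\xi(V)\geq 0\}$. So far this only gives $\textup{WF}(u)\subset\Sigma\cap\{\xi(V)\geq 0\}$, which is strictly larger than $\mathcal{C}$, and closing that gap is the heart of the matter.

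For the gap I would invoke propagation of singularities for the transport operator $F+V\lambda$, whose principal symbol is that of the vector field $F$: since $(F+V\lambda)u=0$ is smooth, $\textup{WF}(u)$ is invariant under the Hamilton flow on $\Sigma$, which is the cotangent lift $(v,\xi)\mapsto(\varphi_tv,(d\varphi_{-t})^\top\xi)$ of the thermostat flow. Here $E_u^*$ and $E_s^*$ are, respectively, the expanding and contracting directions of this lifted flow (the analogue of \eqref{eq:hyperbolic-estimates} for the transposed differentials), and by the transversality \eqref{eq:transversality-cotangent} together with the orientation conventions that place $\mathcal{C}$ in $\{\xi(V)\geq 0\}$, one has $\xi(V)>0$ on $(E_s^*\cup E_u^*)\setminus 0$. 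Now decompose any $\rho=(v,\xi)$ with $\xi\in\Sigma$ as $\xi=\xi_s+\xi_u$ via $\Sigma=E_s^*\oplus E_u^*$; if $\rho\in\{\xi(V)\geq 0\}$ but $\rho\notin\mathcal{C}$, then exactly one of $\xi_s,\xi_u$ lies on the side of its line where $\xi(V)<0$. Flowing forward expands $\xi_u$ and contracts $\xi_s$ (and conversely backwards), so in forward or in backward time the orbit of $\rho$ becomes dominated by a covector with $\xi(V)<0$, contradicting $\textup{WF}(u)\subset\{\xi(V)\geq 0\}$ together with flow-invariance. Hence $\textup{WF}(u)\subset\mathcal{C}$.

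Finally, the same sign computation shows $\mathcal{C}\cap\{\xi(V)=0\}$ is the zero section, so $\textup{WF}(u)$ is disjoint from the conormal bundle of the circle fibres of $\pi:SM\to M$ (the covectors annihilating $V$). Writing $u_k=(2\pi)^{-1}\pi_k^*\pi_{k*}u$, the mode $u_k$ is obtained from $u$ by multiplication by a smooth function, pushforward along these fibres, and pullback; pushforward along $\pi$ is a smooth operation on a distribution whose wavefront set avoids the conormal bundle of the fibres, so $u_k$ is a smooth function, and $Vu_k=iku_k$ then places it in $\Omega_k$. I expect the only genuinely substantive step to be the dynamical escape argument of the third paragraph; the remaining points---the precise microlocal behaviour of $S_+$, the identification of $E_u^*,E_s^*$ as the expanding and contracting directions of the lifted flow, and the sign bookkeeping---are routine and parallel the geodesic case treated in \cite{guillarmou25, paternain23b}.
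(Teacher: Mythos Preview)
Your proof is correct and follows essentially the same approach as the paper: elliptic regularity for $\textup{WF}(u)\subset\Sigma$, the Szeg\H{o} projector for $\textup{WF}(u)\subset\{\xi(V)\geq 0\}$, propagation of singularities plus the Anosov dynamics to reach $\textup{WF}(u)\subset\mathcal{C}$ (the paper compresses your escape argument into the single assertion that $\mathcal{C}$ is the maximal flow-invariant subset of $\Sigma\cap\{\xi(V)\geq 0\}$), and then the transversality \eqref{eq:transversality-cotangent} to obtain smoothness of the $u_k$ via the fibrewise pushforward. One small slip in wording: $E_s^*$ and $E_u^*$ are full lines, so the literal claim ``$\xi(V)>0$ on $(E_s^*\cup E_u^*)\setminus 0$'' is false---you mean the half-rays bounding $\mathcal{C}$---but this does not affect the argument.
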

\begin{proof}
The argument is essentially the same as that of \cite[Lemma 2.5]{guillarmou25}. Let us give the details. 

By definition, each $u\in \mathcal{D}'_{\textup{tr},+}(SM)$ satisfies $S_+ u = u$. Using the wavefront set description of the Schwartz kernel of $S_+$ (see \cite[Lemma 3.10]{guillarmou17}), we thus get
$$\text{WF}(u)=\text{WF}(S_+ u)\subset \left\{(v, \xi)\in T^*(SM)\mid \xi(V(v))\geq 0\right\}.$$
Given that $(F+V\lambda)u=0$, elliptic regularity tells us that 
$$\text{WF}(u)\subset \Sigma.$$
By propagation of singularities for real principal type differential operators (see \cite[Theorem 26.1.1]{hormander09}), we further know that $\text{WF}(u)$ is invariant by the symplectic lift to $T^*(SM)$ of the flow $\{\varphi_t\}_{t\in \R}$. Given the Anosov property, the maximal flow-invariant subset of $T^*(SM)$ contained in $\Sigma \cap \left\{(v, \xi)\in T^*(SM)\mid \xi(V(v))\geq 0\right\}$ is precisely $\mathcal{C}$, so this gives us the first claim. 

For the second claim, recall that $u_k = (2\pi)^{-1}\pi_k^\ast\pi_{k\ast}u$. 
The pushforward operator $\pi_{k\ast}$ only selects the wavefront set in $(\R X)^\ast\oplus \mathbb{H}^*$ (see \cite[Proposition 11.3.3]{friedlander99}), which is empty given that $\mathcal{C}\cap((\R X)^\ast\oplus \mathbb{H}^*)=\{0\}$ by property \eqref{eq:transversality-cotangent}. Therefore, $u_k\in \Omega_k$.
\end{proof}

We will also need the following lemma with the same proof as \cite[Lemma 3.3]{guillarmou25}.

\begin{lemma}\label{lemma:sub-orientation}
Let $(M, g, \lambda)$ and $(M, \tilde{g}, \tilde{\lambda})$ be Anosov thermostats. Suppose that there exists a smooth orbit equivalence $\phi : \widetilde{S}M\to SM$ isotopic to the identity between them. Then, $\phi$ preserves the natural orientation of the weak unstable subbundle $\R F\oplus E_u$, namely that given by the global frame $(F, Y^u)$, where $Y^u$ is defined in equation \eqref{eq:pre-riccati}.
\end{lemma}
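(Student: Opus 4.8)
The plan is to recognize $d\phi$ as a bundle isomorphism between the two weak unstable subbundles, extract from it a locally constant orientation sign, and then pin that sign down to $+1$ using that $\phi$ is isotopic to the identity.

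First I would use that an orbit equivalence respecting the time-orientation of orbits maps weak unstable leaves to weak unstable leaves: the weak unstable leaf through $(x,v)$ has the dynamical description as the set of points whose backward orbit shadows, up to a bounded time shift, the backward orbit of $(x,v)$, and this description only refers to the oriented orbits and not to their parametrization. Consequently $d\phi$ restricts to a vector bundle isomorphism $\R\widetilde{F}\oplus\widetilde{E}_u\to\R F\oplus E_u$ covering $\phi$, and since $\phi_*\widetilde{F}=cF$ with $c>0$ it carries $\widetilde{F}$ to a positive multiple of $F$. Comparing the orientations furnished by the global frames $(\widetilde{F},\widetilde{Y}^u)$ and $(F,Y^u)$, with $Y^{s/u}$ as in \eqref{eq:pre-riccati}, produces a function $\varepsilon:\widetilde{S}M\to\{\pm 1\}$ recording whether $d\phi$ is orientation-preserving at each point. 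This function is continuous, hence constant on the connected manifold $\widetilde{S}M$.

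It then remains to show that $\varepsilon\equiv +1$, and this is the only place the hypothesis that $\phi$ is isotopic to the identity is needed; by constancy it suffices to verify it at a single point $p$. Fix a small embedded piece $D$ of the weak unstable leaf of $\widetilde{F}$ through $p$, containing the flow arc through $p$. Since $\phi$ being isotopic to the identity means precisely that $\phi$ is isotopic through diffeomorphisms to the scaling identification $s^{-1}:\widetilde{S}M\to SM$, restricting such an isotopy to $D$ gives an isotopy of embeddings of $D$ into $SM$; the isotopy extension theorem then lets us write $\phi=\Theta\circ s^{-1}$ on a neighbourhood of $p$ for some diffeomorphism $\Theta$ of $SM$ isotopic to the identity. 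Now $\phi$ takes the oriented orbit of $\widetilde{F}$ through $p$ to the oriented orbit of $F$ through $\phi(p)$ (because $c>0$), and the frame $(F,Y^s,Y^u)$ is positively oriented for the standard orientation of $SM$, since $r^s<r^u$ makes $F\wedge Y^s\wedge Y^u$ a positive multiple of $X\wedge H\wedge V$; these facts, together with $\Theta$ being isotopic to the identity, force $\varepsilon(p)=+1$ by the orientation argument of \cite[Lemma 3.3]{guillarmou25}, which we follow.

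The step I expect to be the real obstacle is this last one. An orbit equivalence that is \emph{not} isotopic to the identity can in principle reverse the orientation of the weak unstable bundle, so the proof must genuinely exploit the isotopy hypothesis, and the delicate point is to reconcile the orientation transported through the topological isotopy (which a priori only records homotopical information) with the dynamically defined orientation $(F,Y^u)$ of the weak unstable leaf of $\phi(p)$; merely knowing that $\phi$ preserves the orientation of $SM$ does not determine the sign.
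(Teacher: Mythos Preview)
Your proposal is essentially the same approach as the paper: the paper does not give an independent proof but simply states that the lemma ``has the same proof as \cite[Lemma 3.3]{guillarmou25}'', and your argument likewise sets up the constant sign $\varepsilon$ and then defers the crucial orientation computation to that same reference. Your additional detail (the dynamical characterization of weak unstable leaves, the constancy of $\varepsilon$ by connectedness) is correct and makes the reduction to \cite{guillarmou25} explicit; the detour through the isotopy extension theorem in your middle paragraph is unnecessary, since $\Theta=\phi\circ s$ is already globally defined and isotopic to the identity by hypothesis, but this does no harm.
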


Armed with this, we can show that $\Phi$ maps fibrewise holomorphic distributional solutions to the transport equation of one thermostat to those of the second. For this step of the proof, however, we restrict to thermostats where $\lambda$ has Fourier degree $\leq 1$ and the attenuated tensor tomography problem of order $1$ is satisfied. 

\begin{proposition}\label{proposition:action-1}
Let $(M, g, \lambda)$ and $(M, \tilde{g}, \tilde{\lambda})$, with $\lambda$ and $\tilde{\lambda}$ of Fourier degree $\leq 1$, be Anosov thermostats satisfying the attenuated tensor tomography problem of order $1$. Suppose there exists a smooth orbit equivalence $\phi : \widetilde{S}M\to SM$ isotopic to the identity between them. Then, the map $\Phi$ defined through the diagram \eqref{eq:definition-phi} yields a $\mathbb{C}$-linear isomorphism
$$\Phi:\mathcal{D}'_{\textup{tr},+}(SM)\to 	\mathcal{D}'_{\textup{tr},+}(\widetilde{S}M).$$
\end{proposition}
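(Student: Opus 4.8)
The map $\Phi$ is a $\mathbb{C}$-linear isomorphism on all of $\mathcal{D}'_{\textup{tr}}(SM)$ by construction (it is a composition of the isomorphisms $L$, $\phi^*$, $\widetilde{L}^{-1}$ from Lemma \ref{lemma:current-isomorphism} and the time-change invariance of $\mathcal{F}$). So the only thing to prove is that $\Phi$ restricts to a bijection between the subspaces $\mathcal{D}'_{\textup{tr},+}(SM)$ and $\mathcal{D}'_{\textup{tr},+}(\widetilde{S}M)$; since $\Phi^{-1}$ is the map associated to $\phi^{-1}$ in exactly the same way (and $\phi^{-1}$ is again a smooth orbit equivalence isotopic to the identity, preserving the remaining tensor-tomography hypothesis), it suffices to show $\Phi\bigl(\mathcal{D}'_{\textup{tr},+}(SM)\bigr)\subset \mathcal{D}'_{\textup{tr},+}(\widetilde{S}M)$. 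Equivalently, I must show: if $u\in\mathcal{D}'_{\textup{tr}}(SM)$ has $S_+u=u$, then $S_-(\Phi u)$ picks up only the zero mode, and in fact $S_+(\Phi u)=\Phi u$.

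\textbf{Step 1: locate the wavefront set.} By Lemma \ref{lemma:wavefrontset}, $u\in\mathcal{D}'_{\textup{tr},+}(SM)$ has $\mathrm{WF}(u)\subset\mathcal{C}$, the closed cone between $E_s^*$ and $E_u^*$ sitting in the half-space $\{\xi(V)\geq 0\}$. By Lemma \ref{lemma:wavefrontset-pullback} and the pullback formula for wavefront sets, $\mathrm{WF}(\Phi u)=d\phi^\top(\mathrm{WF}(u))\subset d\phi^\top(\mathcal{C})$. Now $\phi$ is an orbit equivalence, so $d\phi^\top$ maps the characteristic set $\Sigma=E_s^*\oplus E_u^*$ of $F$ to the characteristic set $\widetilde\Sigma$ of $\widetilde F$, carrying $E_s^*$ to $\widetilde E_s^*$ and $E_u^*$ to $\widetilde E_u^*$ (the weak stable/unstable decomposition is dynamically determined). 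The key point is that $\phi$ preserves the relevant \emph{orientation}: by Lemma \ref{lemma:sub-orientation} (applied to $\phi$ and, symmetrically, to $\phi^{-1}$), the natural orientations of the weak unstable bundles are matched, and, since $\phi$ is a smooth orbit equivalence preserving orientation (Lemma \ref{lemma:wavefrontset-pullback} hypothesis), the cone $\mathcal{C}$ between $E_s^*$ and $E_u^*$ on the side $\{\xi(V)\geq 0\}$ is carried by $d\phi^\top$ into the analogous cone $\widetilde{\mathcal{C}}$ on $\widetilde{S}M$. Hence $\mathrm{WF}(\Phi u)\subset\widetilde{\mathcal{C}}\subset\{\xi(\widetilde V)\geq 0\}$, and by the same transversality \eqref{eq:transversality-cotangent} as in Lemma \ref{lemma:wavefrontset} each Fourier mode $(\Phi u)_k$ is smooth.

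\textbf{Step 2: cut off the negative modes and use tensor tomography.} From Step 1, write $w\coloneqq\Phi u\in\mathcal{D}'_{\textup{tr}}(\widetilde{S}M)$ with $(\widetilde F+\widetilde V\widetilde\lambda)w=0$ and each $w_k$ smooth. I want to show $w_k=0$ for $k<0$. Apply the Szegő projector: let $p\coloneqq S_-w=\sum_{k\leq 0}w_k$. Using the commutation relation \eqref{eq:commutation}, $(\widetilde F+\widetilde V\widetilde\lambda)p = [\widetilde F,S_-]w$ plus lower-order terms, which by \eqref{eq:commutation} and the fact that $\widetilde\lambda$ has Fourier degree $\leq 1$ lies in $\Omega_{-1}\oplus\Omega_0\oplus\Omega_1$ — in particular it has finite Fourier degree. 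Since $p\in\mathcal{D}'$ but its wavefront set sits in $\widetilde{\mathcal{C}}\cap\{\xi(\widetilde V)\geq 0\}$ which by transversality forces $p$ to be smooth (only the modes $k\le 0$ of a distribution with $\mathrm{WF}$ in the $\{\xi(\widetilde V)\ge 0\}$ half-cone that are \emph{also} forced by membership in $\widetilde{\mathcal C}$ are $k=0$, so $p=w_0$ actually — let me instead argue directly): in fact $\mathrm{WF}(w)\subset\widetilde{\mathcal C}$ already gives $w_k$ smooth for all $k$ and $w_k=0$ for $k<0$ except possibly through the $p=w_0+\sum_{k<0}w_k$ combination. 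The clean route: since $w$ is fibrewise a distribution with $\mathrm{WF}$ in the half-space $\{\xi(\widetilde V)\ge 0\}$, the same Lemma \ref{lemma:wavefrontset}-type argument shows $S_+w=w$ \emph{provided} $w$ itself were known smooth; so instead set $h\coloneqq S_-w - w_0=\sum_{k<0}w_k$, a smooth function (each mode smooth, and the wavefront argument bounds the sum), with $(\widetilde F+\widetilde V\widetilde\lambda)h$ of finite Fourier degree by \eqref{eq:commutation}. The attenuated tensor tomography problem of order $1$, via Proposition \ref{theorem:finite-implies-finite}, then forces $h$ to have finite Fourier degree, and a direct ladder argument on the top negative mode (as in the proof of Theorem \ref{theorem:thermostat-tensor-tomography}, invoking \cite[Proposition 6.1]{assylbekov17}) gives $h=0$. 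Hence $w_k=0$ for all $k<0$, i.e. $S_+w=w$, so $w\in\mathcal{D}'_{\textup{tr},+}(\widetilde{S}M)$.

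\textbf{Step 3: conclude.} Running the identical argument with $\phi$ replaced by $\phi^{-1}$ shows $\Phi^{-1}\bigl(\mathcal{D}'_{\textup{tr},+}(\widetilde{S}M)\bigr)\subset\mathcal{D}'_{\textup{tr},+}(SM)$, so $\Phi$ restricts to the asserted $\mathbb{C}$-linear isomorphism. \textbf{Main obstacle.} The delicate point is Step 2: one must carefully extract a \emph{smooth}, finite-Fourier-degree object from the a priori merely distributional $w$ so that Proposition \ref{theorem:finite-implies-finite} and the order-$1$ tensor tomography statement actually apply. This is where the wavefront set confinement to the cone $\widetilde{\mathcal{C}}$ (Step 1, itself resting on the orientation-matching Lemmas \ref{lemma:wavefrontset-pullback} and \ref{lemma:sub-orientation}) does the essential work: it both localizes the singularities to $\widetilde\Sigma$ and, via \eqref{eq:transversality-cotangent}, makes each Fourier mode smooth, turning the negative-mode part into a genuine smooth section to which the tensor-tomography machinery can be applied.
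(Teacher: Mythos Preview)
Your overall architecture matches the paper's proof exactly: confine $\mathrm{WF}(\Phi u)$ to $\widetilde{\mathcal C}$ via Lemmas~\ref{lemma:wavefrontset}, \ref{lemma:wavefrontset-pullback} and \ref{lemma:sub-orientation}, apply $\widetilde S_-$, compute the commutator $[\widetilde F+\widetilde V\tilde\lambda,\widetilde S_-]\tilde u$ to land in Fourier degree $\le 1$, and invoke tensor tomography of order $1$ to conclude $\widetilde S_-\tilde u$ has degree $0$. Step~1 and Step~3 are fine.

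Step~2, however, contains two genuine gaps. First, your justification that $h=\sum_{k<0}w_k$ (or $\widetilde S_-w$) is smooth --- ``each mode smooth, and the wavefront argument bounds the sum'' --- is not valid: smoothness of each Fourier mode does not imply smoothness of the series. The paper instead asserts $\widetilde S_-\tilde u\in\mathcal C^\infty(\widetilde SM)$ directly from the wavefront calculus of the Szeg\H{o} projector: since $\mathrm{WF}(\tilde u)\subset\widetilde{\mathcal C}$ lies strictly in the open half-space $\{\xi(\widetilde V)>0\}$ (by the transversality \eqref{eq:transversality-cotangent}, $\widetilde{\mathcal C}\cap\widetilde{\mathbb H}^*=\{0\}$), and the Schwartz kernel of $\widetilde S_-$ only propagates singularities into $\{\xi(\widetilde V)\le 0\}$ (the companion of \cite[Lemma 3.10]{guillarmou17} for $S_-$), the output is smooth. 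Your repeated self-corrections in Step~2 suggest you sensed this but never landed on the right mechanism.

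Second, you invoke Proposition~\ref{theorem:finite-implies-finite} and then ``a direct ladder argument \dots\ invoking \cite[Proposition 6.1]{assylbekov17}''. This is wrong: Proposition~\ref{theorem:finite-implies-finite} requires $\widetilde{\mathbb K}<0$, which is \emph{not} a hypothesis of Proposition~\ref{proposition:action-1}. The hypothesis you have is precisely that $(M,\tilde g,\tilde\lambda)$ satisfies the attenuated tensor tomography problem of order $1$ (Definition~\ref{property}), which says directly: if $(\widetilde F+\widetilde V\tilde\lambda)p=f$ with $p,f$ smooth and $f$ of Fourier degree $\le 1$, then $p$ has Fourier degree $0$. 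Apply this to $p=\widetilde S_-\tilde u$ and you are done --- no Carleman estimates, no ladder argument needed.
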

\begin{proof}
Since $d\phi^\top$ maps connected sets to connected sets, $E_u^*$ to $\widetilde{E}_u^*$, and $E_s^*$ to $\widetilde{E}_s^*$, $d\phi^\top(\mathcal{C})$ must be one of the four cones depicted on the right of Figure \ref{figure:subbundles} (inside the characteristic set $\widetilde{\Sigma}$). It follows that $d\phi^\top(\mathcal{C})=\pm \widetilde{\mathcal{C}}$ because any other cone would entail that $\phi$ reverses the orientation, which is impossible since it is assumed to be isotopic to the identity. If $d\phi^\top(\mathcal{C})=- \widetilde{\mathcal{C}}$, then $\phi$ would flip the orientation of the weak unstable leaves, which contradicts Lemma \ref{lemma:sub-orientation}. Therefore, $d\phi^\top(\mathcal{C})= \widetilde{\mathcal{C}}$.

Let $u\in \mathcal{D}'_{\textup{tr},+}(SM)$ and $\tilde{u}\coloneqq\Phi u$. By Lemma \ref{lemma:wavefrontset}, we know that $\text{WF}(u)\subset \mathcal{C}$. By Lemma \ref{lemma:wavefrontset-pullback}, we thus know that $\text{WF}(\tilde{u})=d\phi^\top (\text{WF}(u))\subset  \widetilde{\mathcal{C}}$.  Then, $\widetilde{S}_- \tilde{u}\in \mathcal{C}^\infty(\widetilde{S}M)$ and, since $(\widetilde{F}+\widetilde{V}\tilde{\lambda})\tilde{u}=0$, we also have that 
\begin{equation*}
\begin{alignedat}{1}
(\widetilde{F}+\widetilde{V}\tilde{\lambda})\widetilde{S}_-\tilde{u}&=[\widetilde{F} + \widetilde{V}\tilde{\lambda}, \widetilde{S}_-]\tilde{u}\\
&=(\tilde{\eta}_++\tilde{\lambda}_1\widetilde{V}+i\tilde{\lambda}_1)\tilde{u}_0-(\tilde{\eta}_-+\tilde{\lambda}_{-1}\widetilde{V}-i\tilde{\lambda}_{-1})\tilde{u}_1.
\end{alignedat}
\end{equation*}

Since $(M, \tilde{g}, \tilde{\lambda})$ satisfies the tensor tomography problem of order $1$ by assumption, it follows that $\widetilde{S}_-\tilde{u}$ is of Fourier degree $0$. Hence, $\tilde{u}$ is fibrewise holomorphic. The fact that $\Phi$ is an isomorphism is then clear as it admits an inverse, namely the map associated to $(\phi^{-1})^*$ by the diagram \eqref{eq:definition-phi}. 
\end{proof}
\subsection{Extension operator} Next, we show how $\Phi$ can be seen as acting on holomorphic differentials from one complex surface to another. Let us start by noting that the map 
\begin{equation}\label{eq:above-extension}
\pi_{1\ast}: \mathcal{D}'_{\text{tr},+}(SM)\to H_J^0(M, \kappa).
\end{equation}
is well-defined. Indeed, if $u \in \mathcal{D}'_{\text{tr},+}(SM)$, then $Xu+V(\lambda u)=(F+V\lambda)u=0$, which means that $(Xu)_0=0$ and hence $\eta_-u_1=0$. By Lemma \ref{lemma:raising-lowering-equivalence}, this is equivalent to $\overline{\partial}\pi_{1\ast} u = 0$. 

Thanks to Theorem \ref{theorem:extension-holomorphic}, we know that the map \eqref{eq:above-extension} is surjective. We can thus define a right-inverse
$$e_1:H_J^0(M, \kappa) \to  \mathcal{D}'_{\text{tr},+}(SM)$$
such that $\pi_{1\ast}\circ e_1= \text{id}_{H_J^0(M, \kappa)}$. We call it an \textit{extension operator}. We may then define the map 
\begin{equation*}
\Psi : H^0_{J}(M, \kappa)\to H^0_{\widetilde{J}}(M, \kappa)
\end{equation*}
by the following commutative diagram.
\begin{equation}%\label{eq:Psi-map}
\begin{tikzcd}
\mathcal{D}'_{\text{tr},+}(SM)\arrow[r, "\Phi"] & \mathcal{D}'_{\text{tr},+}(\widetilde{S}M)\arrow[d, "\pi_{1\ast}"]\\
H^0_{J}(M, \kappa)\arrow[u, "e_1"]\arrow[r, "\Psi", dashrightarrow] & H^0_{\widetilde{J}}(M, \kappa)
\end{tikzcd}
\end{equation}
\subsection{Period preservation} The following result shows that the induced mapping of holomorphic differentials we have just defined preserves additional structure. 
\begin{proposition}\label{proposition-last-isomorphism} Let $(M, g, \lambda)$ and $(M, \tilde{g}, \tilde{\lambda})$, with $\lambda$ and $\tilde{\lambda}$ of Fourier degree $\leq 1$, be Anosov thermostats satisfying the attenuated tensor tomography problem of order $1$. Then, the $\mathbb{C}$-linear map 
\begin{equation*}
\Psi : H^0_{J}(M, \kappa)\to H^0_{\widetilde{J}}(M, \kappa)
\end{equation*}%$\Psi$ defined in \eqref{eq:Psi-map}
is an isomorphism. It preserves periods in the sense that, for all $[\gamma]\in H_1(M;\Z)$ and $\tau \in H^0_{J}(M, \kappa)$, we have 
$$\int_{[\gamma]}\tau=\int_{[\gamma]} \Psi \tau.$$
\end{proposition}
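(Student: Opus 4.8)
The plan is to prove that $\Psi$ preserves periods and then to read off that it is an isomorphism. For the latter implication, recall that $\dim_{\mathbb{C}}H^0_{J}(M,\kappa)=\dim_{\mathbb{C}}H^0_{\widetilde J}(M,\kappa)$ equals the genus of $M$, and that a holomorphic $1$-form all of whose $a$-periods vanish must be zero (expand it in the basis $\{\zeta_j\}$ from \eqref{eq:basis-condition}); hence period preservation forces $\ker\Psi=0$, and a dimension count then makes $\Psi$ an isomorphism. To prove $\int_{[\gamma]}\tau=\int_{[\gamma]}\Psi\tau$ for all $[\gamma]\in H_1(M;\Z)$ and $\tau\in H^0_{J}(M,\kappa)$, I would first observe that both sides are additive in $[\gamma]$; since every nontrivial free homotopy class of $M$ carries a closed thermostat geodesic by \cite[Theorem A]{ghys84}, and every nonzero class in $H_1(M;\Z)$ is realized this way, it suffices to take $[\gamma]$ to be the class of the closed $g$-thermostat geodesic $\gamma=\pi(\Gamma)$ coming from a closed orbit $\Gamma\subset SM$ of $F$.

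The core ingredient is a pairing formula (Lemma \ref{lemma:pairing-two}). Let $T_\gamma\subset SM$ be the $2$-torus swept out by applying the fibre rotations $\rho_\theta$ to $\Gamma$. For $u\in\mathcal{D}'_{\textup{tr},+}(SM)$, the closed $2$-current $Lu=u\omega\in\mathcal{F}(SM)$, with $\omega=\iota_F\mu$ as in \eqref{eq:definition-special-form}, can be restricted to $T_\gamma$ — this is legitimate because $\textup{WF}(u\omega)\subset\textup{WF}(u)\subset\mathcal{C}$ by Lemma \ref{lemma:wavefrontset}, whereas the conormal of $T_\gamma$ lies in $\{(v,\xi):\xi(V(v))=0\}$, which meets $\mathcal{C}$ only along the zero section by \eqref{eq:transversality-cotangent} — and
\[\big\langle u\omega,\,[T_\gamma]\big\rangle=C\int_\gamma\pi_{1\ast}u\]
for a fixed nonzero constant $C$ independent of $u$ and $\gamma$. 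The computation behind this is short: in coordinates $(t,\theta)$ on $T_\gamma$ (flow time and fibre angle), the pullback of $\omega$ is a nonzero multiple of $\sin\theta\,dt\wedge d\theta$ — the $\lambda\,\alpha\wedge\beta$ part of $\omega=\iota_F\mu$ contributes nothing — and since each $u_k$ is smooth (again Lemma \ref{lemma:wavefrontset}) and restricts to $e^{ik\theta}u_k(\gamma(t),\dot\gamma(t))$, only the mode $k=1$ survives the $\theta$-integration, leaving $\int_\gamma u_1(\gamma(t),\dot\gamma(t))\,dt$ up to a constant, with $u_1$ a fixed multiple of $\pi_1^*(\pi_{1\ast}u)$.

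I would then apply the formula on both thermostats. Let $\widetilde\gamma\subset M$ be the closed $\tilde{g}$-thermostat geodesic in the free homotopy class of $\gamma$, with associated torus $\widetilde T\subset\widetilde S M$. By the definition of $\Phi$ through \eqref{eq:definition-phi} we have $\phi^*(u\omega)=(\Phi u)\,\tilde{\omega}$, so, since $u\omega$ is closed,
\[\big\langle(\Phi u)\,\tilde{\omega},\,[\widetilde T]\big\rangle=\big\langle\phi^*(u\omega),\,[\widetilde T]\big\rangle=\big\langle u\omega,\,\phi_*[\widetilde T]\big\rangle.\]
This is where the hypothesis that $\phi$ is isotopic to the identity is used decisively: $\phi$ is homotopic to the fibrewise scaling diffeomorphism $s^{-1}:\widetilde S M\to SM$ (because $s\circ\phi\simeq\mathrm{id}$), and $s^{-1}$ covers $\mathrm{id}_M$, so $\phi_*[\widetilde T]$ is the class in $H_2(SM;\Z)$ of the torus lying over $\widetilde\gamma$ inside $SM$. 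Since $\gamma$ and $\widetilde\gamma$ are freely homotopic, $[\widetilde\gamma]=[\gamma]$ in $H_1(M;\Z)$, and the class in $H_2(SM;\Z)$ of the torus over a curve depends only on the homology class of that curve (a $2$-chain in $M$ bounding a difference of curves pulls back through the circle bundle $SM$ to a bounding $3$-chain); hence $\phi_*[\widetilde T]=[T_\gamma]$. Taking $u=e_1\tau$, so that $\pi_{1\ast}u=\tau$ and $\pi_{1\ast}(\Phi u)=\Psi\tau$, the pairing formula on each side gives $C\int_{[\gamma]}\Psi\tau=\langle(\Phi u)\,\tilde{\omega},[\widetilde T]\rangle=\langle u\omega,[T_\gamma]\rangle=C\int_{[\gamma]}\tau$, and dividing by $C\neq 0$ finishes the argument.

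I expect the main obstacle to be the pairing formula itself — both the microlocal check that $u\omega$ restricts to $T_\gamma$ and the bookkeeping of the constant — and it is precisely there that replacing the Liouville form by $\omega=\iota_F\mu$, which is insensitive to time changes and absorbs the divergence, is what allows an orbit equivalence rather than a conjugacy to be used. The homological step in the third paragraph, though elementary, is the conceptual heart: it is the mechanism converting ``$\phi$ isotopic to the identity'' into equality of periods, and it would fail for a general orbit equivalence.
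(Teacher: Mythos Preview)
Your outline follows the paper's proof closely: the pairing formula (your ``core ingredient'') is exactly Lemma~\ref{lemma:pairing-two}, the change of variables $\phi^*(u\omega)=(\Phi u)\tilde\omega$ is the definition of $\Phi$, and the homological identification $\phi_*[\widetilde T]=[T_\gamma]$ is precisely what the paper uses. Your additional argument that period preservation forces $\Psi$ to be injective (hence an isomorphism by dimension) is a nice point that the paper leaves implicit.

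There is, however, one genuine gap. You pass directly from $[\phi(\widetilde T)]=[T_\gamma]$ in $H_2(SM;\Z)$ to $\langle u\omega,\phi(\widetilde T)\rangle=\langle u\omega,T_\gamma\rangle$, invoking only that $u\omega$ is closed. For a \emph{smooth} closed $2$-form this is Stokes' theorem, but $u\omega$ is merely a closed $2$-current with wavefront set in $\mathcal C$, and the surface $\phi(\widetilde T)$ is not of the form $\pi^{-1}(\text{curve})$, so your wavefront check for $T_\gamma$ does not apply to it, nor to any $3$-chain bounding the difference. The paper closes this gap by writing $e_1(\tau)\omega=\sigma+df$ via Hodge decomposition, with $\sigma$ harmonic (hence smooth, so its pairing with cycles is homology-invariant by the classical argument) and $\mathrm{WF}(f)=\mathrm{WF}(e_1\tau)\subset\mathcal C$; one then checks separately that $N^*(\phi(\widetilde T))\cap\mathcal C=\{0\}$ --- this uses $d\phi^\top(\mathcal C)=\widetilde{\mathcal C}$ from the proof of Proposition~\ref{proposition:action-1} --- so that both $\langle df,\phi(\widetilde T)\rangle$ and $\langle df,T_\gamma\rangle$ are well defined and vanish. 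This is exactly the kind of ``microlocal check'' you anticipate, but it sits at the homological step rather than at the pairing formula.

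A smaller point: you should make explicit that the constant $C$ in your pairing formula is the same on both thermostats (it is $-i/2$, coming from the universal identity $\pi_*(u\omega)=\tfrac12\star(\pi_{-1\ast}u+\pi_{1\ast}u)$ of Lemma~\ref{lemma:first-pairing} together with $\star\tau=-i\tau$ for $\tau\in H^0_J(M,\kappa)$); otherwise dividing by $C$ at the end is not justified. Also, in your $\theta$-integration it is the modes $k=\pm1$ that survive against $\sin\theta$, and it is the fibrewise holomorphy $u_{-1}=0$ that then isolates $k=1$ --- worth saying, since the formula in Lemma~\ref{lemma:pairing-two} is stated for general $u\in\mathcal D'_{\mathrm{tr}}(SM)$ and involves both $\pi_{\pm1\ast}u$.
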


Recall that there is a push-forward map $\pi_\ast: \mathcal{C}^\infty(SM, \Omega^2(SM))\to \mathcal{C}^\infty(M, \Omega^1(M))$ given by integration along fibres. It satisfies $d\pi_\ast = \pi_\ast d$ (see \cite[Proposition 6.14]{bott82}), and it extends to currents. By \cite[Proposition 6.15]{bott82}, we have the projection formula
\begin{equation}\label{eq:projection-formula}
\int_{\pi^{-1}(\gamma)}\sigma=\int_{\gamma}\pi_\ast \sigma
\end{equation}
for any smooth oriented curve $\gamma$ on $M$ and any $2$-form $\sigma$ on $SM$. 

\begin{lemma}\label{lemma:first-pairing}
Let $(M, g, \lambda)$ be a thermostat and let $\omega$ be the $2$-form on $SM$ defined in equation \eqref{eq:definition-special-form}. For any $u\in \mathcal{D}'(SM)$, we have 
\begin{equation*}
\pi_\ast (u\omega)=\dfrac{1}{2}\star\left(\pi_{-1\ast}u+\pi_{1\ast}u\right).
\end{equation*}
\end{lemma}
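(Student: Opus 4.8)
The plan is to reduce the pushforward $\pi_\ast(u\omega)$ to that of an explicit ``exact plus horizontal'' $2$-form, and then to extract the two relevant Fourier modes by a one-fibre computation in isothermal coordinates.

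\emph{Step 1: simplifying $\omega$.} Since $\mu=-\alpha\wedge d\alpha$ with $\alpha(X)=1$ and $\iota_Xd\alpha=0$, one has $\iota_X\mu=-d\alpha$, so $\omega=\iota_F\mu=\iota_X\mu+\lambda\,\iota_V\mu=-d\alpha+\lambda\,\iota_V\mu$ and, applying the Leibniz rule in the sense of currents,
\[
u\omega=-d(u\alpha)+du\wedge\alpha+\lambda u\,\iota_V\mu .
\]
I would then use two elementary properties of integration along the circle fibres of $\pi:SM\to M$: it commutes with $d$, and it annihilates any form or current $\sigma$ with $\iota_V\sigma=0$. Both $u\alpha$ and $\lambda u\,\iota_V\mu$ satisfy the latter (the first because $\iota_V\alpha=0$, the second because $\iota_V\iota_V=0$), while $\pi_\ast d(u\alpha)=d\,\pi_\ast(u\alpha)=0$; hence
\[
\pi_\ast(u\omega)=\pi_\ast(du\wedge\alpha).
\]
This identity holds verbatim for $u\in\mathcal{D}'(SM)$, or else one reduces to $u\in\mathcal{C}^\infty(SM)$ by density since both sides are continuous in $u$; I would adopt the latter viewpoint from here on.

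\emph{Step 2: the fibre computation.} Work in local isothermal coordinates $z=x+iy$ with $g=e^{2\rho}|dz|^2$, and trivialise $SM$ by the fibre angle $\theta$ so that $V=\partial_\theta$, the unit vectors satisfy $dz(v_\theta)=e^{-\rho}e^{i\theta}$, and, from $\alpha_{(x,v)}=g_x\bigl(v,d\pi(\cdot)\bigr)$,
\[
\alpha=\tfrac{e^{\rho}}{2}\bigl(e^{-i\theta}\,dz+e^{i\theta}\,d\bar z\bigr).
\]
Because $\pi_\ast\sigma$ depends only on $\iota_V\sigma$, and $\iota_V(du\wedge\alpha)=(Vu)\alpha$, writing $u=\sum_k u_k$ with $u_k(x,v_\theta)=\widehat u_k(x)e^{ik\theta}$ gives $\iota_V(du\wedge\alpha)=\sum_k ik\,\widehat u_k\,e^{ik\theta}\,\tfrac{e^\rho}{2}(e^{-i\theta}dz+e^{i\theta}d\bar z)$, and upon integrating over $\theta\in[0,2\pi]$ only the $k=1$ term (paired with $dz$) and the $k=-1$ term (paired with $d\bar z$) survive. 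One then compares the result with the right-hand side using the local forms of $\pi^\ast_{\pm1}$ and the normalisation $(2\pi)^{-1}\pi_k^\ast\pi_{k\ast}=\mathrm{proj}_{\Omega_k}$ — so that $\pi_{1\ast}u$ is a fixed multiple of the section $e^\rho\widehat u_1\,dz$ and $\pi_{-1\ast}u$ of $e^\rho\widehat u_{-1}\,d\bar z$ — together with the action of the Hodge star on a surface, $\star\,dz=-i\,dz$ and $\star\,d\bar z=i\,d\bar z$. Matching all the factors yields $\pi_\ast(u\omega)=\tfrac12\star(\pi_{-1\ast}u+\pi_{1\ast}u)$; incidentally this makes it manifest that both sides depend on $u$ only through $u_{-1}$ and $u_1$.

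\emph{Main obstacle.} There is no substantive analytic difficulty once Step 1 is in place — the rest is a single-fibre integral. The only delicate point is bookkeeping of conventions: the positive orientation $\alpha\wedge\beta\wedge\psi$ of $SM$ and the induced fibre orientation entering the definition of $\pi_\ast$, the factor $2\pi$ built into $\pi_{k\ast}$, and the chosen sign of $\star$. With the conventions fixed in \S\ref{subsection:geometry-of-sm} and \S\ref{subsection:complex-geometry} these conspire to give exactly the stated equality (a single misplaced sign in the fibre-integration convention would, for example, produce $+\tfrac12\star$ with the opposite Hodge sign), so the real work is simply to carry out Step 2 with care.
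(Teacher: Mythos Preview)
Your approach is correct, but it differs from the paper's in both steps. The paper never decomposes $u\omega$ via $d(u\alpha)$ or passes to isothermal coordinates; instead it writes $\omega=\beta\wedge\psi+\lambda\,\alpha\wedge\beta$ directly in the global Sasaki coframe and evaluates $\pi_\ast(u\omega)_x(w)=\int_{S_xM}\iota_{\tilde w}(u\omega)$ for a horizontal lift $\tilde w$. Since $\psi(\tilde w)=0$ and only $\psi$ survives upon restriction to the fibre, the $\lambda\,\alpha\wedge\beta$ term drops out immediately (your Step~1 achieves the same thing via $\iota_V\iota_V\mu=0$, which is a nice alternative). The paper then uses the pointwise identity $\beta_{(x,v)}(\tilde w)=g_x(w,Jv)$ and evaluates at $Jw$, turning the fibre integral into $\int_0^{2\pi}\cos(t)\,u(\rho_t(x,w))\,dt=\pi(u_{-1}+u_1)(x,w)$; the Hodge star enters only at the very end when converting the identity back from $Jw$ to $w$. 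Your route trades this coordinate-free computation for an explicit isothermal calculation, which works but---as you yourself flag---leaves the sign and $2\pi$ bookkeeping to be checked by hand. The paper's argument avoids that bookkeeping entirely by staying in the moving frame, at the cost of needing the identity for $\beta(\tilde w)$; either way the result is a single-fibre Fourier computation.
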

\begin{proof} It suffices to establish the claim for $u\in \mathcal{C}^\infty(SM)$. Recall that $\omega\coloneqq\iota_F\mu$. A quick computation then yields
\begin{equation}\label{eq:useful-for-later}
\omega =\beta\wedge\psi + \lambda\alpha\wedge\beta.
\end{equation}
Note that  $\pi^*\mu_a = \alpha\wedge\beta$, where $\mu_a$ is the area form on $M$.

Pick $x\in M$ and take $w\in S_xM$. Then, by definition, we have
$$\pi_\ast (u\omega)_x (w)=\int_{S_x M}\iota_{\tilde{w}}(u\omega),$$
where $\tilde{w}\in T(SM)$ is a lift of $w$ under $d\pi$. We take $\tilde{w}=(w,0)$, that is, no component in the vertical subbundle $\mathbb{V}=\ker d\pi$. Then, since $\psi(\tilde{w})=0$, we get
$$\pi_\ast (u\omega)_x (w)=\int_{S_x M}\iota_{\tilde{w}}(\beta)u\psi.$$
Given that $\iota_{\tilde{w}}\beta_{(x,v)}=g_x( w, Jv)$, we obtain
\begin{equation*}
\begin{alignedat}{1}
\pi_\ast(u\omega)_x(Jw)&=\int_{S_xM} g_x(w, \cdot) u(x,\cdot) \psi\\
&=\int_0^{2\pi} \cos (t) u(\rho_t(x, w))\, dt\\
&=\dfrac{1}{2}\int_0^{2\pi} (e^{it}+e^{-it}) u(\rho_t(x, w))\, dt\\
&=\pi(u_{-1}+u_1)(x, w),
%&=\dfrac{1}{2}\left((\pi_{-1\ast}u)_x+(\pi_{1\ast}u)_x\right)(w)
\end{alignedat}
\end{equation*} 
where in the last equality, we used equation \eqref{eq:definition-fourier-mode}. In terms of $1$-forms, since we have $u_{k}=(2\pi)^{-1}\pi_{k}^*\pi_{k\ast}u $, we proved that 
$$-\star \pi_\ast (u\omega)=\dfrac{1}{2}\left(\pi_{-1\ast}u+\pi_{1\ast}u\right).$$
We conclude by applying $\star$ to both sides. 
\end{proof}

We can then integrate this identity, applied to solutions of the transport equation, over closed thermostat geodesics to obtain the following result.

\begin{lemma}\label{lemma:pairing-two}
Let $(M, g, \lambda)$ be an Anosov thermostat and let $\gamma$ be a closed thermostat geodesic. For any $u\in \mathcal{D}'_{\textup{tr}}(SM)$, the pairing $\langle \pi^{-1}(\gamma), u\omega\rangle$ is well defined and
\begin{equation*}
\int_{\pi^{-1}(\gamma)} u \omega =\dfrac{1}{2} \int_{[\gamma]} \star(\pi_{-1\ast}u+\pi_{1\ast}u).
\end{equation*}
\end{lemma}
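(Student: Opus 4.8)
The plan is to reduce the statement to the smooth pairing formula of Lemma \ref{lemma:first-pairing} combined with the projection formula \eqref{eq:projection-formula}, and then to deal with the two non-smooth aspects: the current $u$ is only a distribution, and $\pi^{-1}(\gamma)$ is a $2$-cycle against which we want to integrate a $2$-current. First I would observe that, since $u \in \mathcal{D}'_{\textup{tr}}(SM)$, Lemma \ref{lemma:current-isomorphism} tells us that $L u = u\omega \in \mathcal{F}(SM)$, so $u\omega$ is a \emph{closed} $2$-current whose wavefront set is controlled: arguing exactly as in the proof of Lemma \ref{lemma:wavefrontset} (elliptic regularity for $F$ plus propagation of singularities and the Anosov property), one gets $\textup{WF}(u) \subset E_s^* \oplus E_u^* = \Sigma$, so that $\textup{WF}(u\omega)$ is disjoint from the conormal bundle $N^*(\pi^{-1}(\gamma))$ of the smooth submanifold $\pi^{-1}(\gamma) \subset SM$. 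Indeed, $\pi^{-1}(\gamma)$ is a flow-invariant surface (it is a union of the orbit through $(\gamma(0),\dot\gamma(0))$ and its vertical rotates — no, more carefully, it is the union over $v \in S_{\gamma(t)}M$, i.e. the restriction of $SM$ over the curve $\gamma$), and its conormal directions annihilate $F$ only where they vanish, whereas $\textup{WF}(u\omega)$ lies in $\Sigma$; the two meet only in the zero section. This transversality of wavefront set to conormal is precisely the hypothesis under which the pullback of the current $u\omega$ to $\pi^{-1}(\gamma)$, and hence the pairing $\langle \pi^{-1}(\gamma), u\omega\rangle$, is well defined (see \cite[Theorem 8.2.4]{hormander03} and the standard theory of integration of currents against cycles with matching wavefront condition). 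So the first half of the statement — that the pairing makes sense — follows.

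For the identity itself, I would then argue by approximation. Using the Fourier decomposition on $SM$, write $u$ as a limit (in $\mathcal{D}'(SM)$, with wavefront sets staying inside a fixed closed cone disjoint from $N^*(\pi^{-1}(\gamma))$) of smooth functions; for instance, mollify in the flow direction, or cut off the Fourier modes $|k| \le N$ and let $N \to \infty$, checking that the wavefront set of the approximants stays inside $\mathcal{C} \cup (-\mathcal{C})$ or at any rate inside $\Sigma$ away from the zero section. The pairing $\sigma \mapsto \langle \pi^{-1}(\gamma), \sigma\rangle$ is continuous on the subspace of $2$-currents with wavefront set in such a fixed cone (this is the continuity statement in Hörmander's pullback theorem, [loc.\ cit.]). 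For the smooth approximants $u^{(j)}$ we have, by the projection formula \eqref{eq:projection-formula} and Lemma \ref{lemma:first-pairing},
\begin{equation*}
\int_{\pi^{-1}(\gamma)} u^{(j)}\omega = \int_\gamma \pi_*(u^{(j)}\omega) = \frac{1}{2}\int_{[\gamma]} \star\bigl(\pi_{-1*}u^{(j)} + \pi_{1*}u^{(j)}\bigr).
\end{equation*}
Now pass to the limit on both sides: the left-hand side converges to $\int_{\pi^{-1}(\gamma)} u\omega$ by the continuity just discussed, and on the right-hand side $\pi_{-1*}u^{(j)} \to \pi_{-1*}u$ and $\pi_{1*}u^{(j)} \to \pi_{1*}u$ in $\mathcal{D}'(M)$, with the relevant wavefront sets empty (since $\mathcal{C}$, and more generally $\Sigma$, meets $(\mathbb{R}X)^* \oplus \mathbb{H}^*$ only in the zero section, by \eqref{eq:transversality-cotangent}, exactly as in the second half of the proof of Lemma \ref{lemma:wavefrontset}), so $\pi_{\pm1*}u$ are in fact \emph{smooth} $1$-forms and the integrals $\int_{[\gamma]}\star(\pi_{-1*}u + \pi_{1*}u)$ depend continuously on $u$. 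This yields the stated identity.

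The main obstacle I expect is the rigorous justification that the pairing $\langle \pi^{-1}(\gamma), u\omega\rangle$ is well defined and that the smooth-approximation scheme is legitimate, i.e. the wavefront set bookkeeping: one must pin down $N^*(\pi^{-1}(\gamma))$, verify it lies in the set $\{(v,\xi) : \xi(F(v)) = 0\}$ only where $\xi$ would have to vanish — equivalently that $F$ is tangent to $\pi^{-1}(\gamma)$, which holds since $\gamma$ is a thermostat geodesic so the orbit of $(\gamma(0),\dot\gamma(0))$ lies in $\pi^{-1}(\gamma)$ — and then check $\textup{WF}(u\omega) \cap N^*(\pi^{-1}(\gamma)) \subset$ zero section using $\textup{WF}(u) \subset \Sigma$. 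Multiplication by the smooth form $\omega$ does not enlarge the wavefront set, so $\textup{WF}(u\omega) \subset \textup{WF}(u)$. Everything else is routine: the projection formula and Lemma \ref{lemma:first-pairing} are quoted directly, and the limiting argument only uses standard continuity properties of the operations involved.
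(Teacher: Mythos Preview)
Your wavefront-set argument for the well-definedness of the pairing has a genuine gap. You claim that the conormal directions of $\pi^{-1}(\gamma)$ ``annihilate $F$ only where they vanish,'' i.e.\ that $N^*(\pi^{-1}(\gamma))\cap\Sigma=\{0\}$, and you justify this by observing that $F$ is tangent to $\pi^{-1}(\gamma)$ along the orbit of $(\gamma(0),\dot\gamma(0))$. But tangency of $F$ says the \emph{opposite}: at each point $(\gamma(t),\pm\dot\gamma(t))$ the tangent space $T(\pi^{-1}(\gamma))$ is $\mathrm{span}\{X,V\}$, so $N^*(\pi^{-1}(\gamma))=\mathbb{H}^*$ there, and since $\xi(F)=\xi(X)+\lambda\xi(V)=0$ for $\xi\in\mathbb{H}^*$ we get $\mathbb{H}^*\subset\Sigma$. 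Thus $N^*(\pi^{-1}(\gamma))\cap\Sigma=\mathbb{H}^*\neq\{0\}$ at exactly those points, and the inclusion $\textup{WF}(u)\subset\Sigma$ alone does not separate $\textup{WF}(u)$ from $N^*(\pi^{-1}(\gamma))$.

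The paper sidesteps this by using a different tangency and a sharper wavefront bound. Since $V$ is tangent to $\pi^{-1}(\gamma)$ \emph{everywhere}, one has $N^*(\pi^{-1}(\gamma))\subset(\mathbb{R}X)^*\oplus\mathbb{H}^*$ at every point. The paper then invokes the inclusion $\textup{WF}(u)\subset\mathcal{C}$ from Lemma~\ref{lemma:wavefrontset} and the transversality \eqref{eq:transversality-cotangent}; together these give $\mathcal{C}\cap\bigl((\mathbb{R}X)^*\oplus\mathbb{H}^*\bigr)=\{0\}$ (a nonzero $\xi\in\mathcal{C}$ has $\xi(V)>0$, since $\xi(V)=0$ on $\Sigma$ forces $\xi\in\mathbb{H}^*$, which meets $E_s^*$ and $E_u^*$ only at the origin). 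Note this uses more than $\textup{WF}(u)\subset\Sigma$: it needs the cone $\mathcal{C}$, which in Lemma~\ref{lemma:wavefrontset} is obtained for $u\in\mathcal{D}'_{\textup{tr},+}(SM)$, the only case used downstream. Once the pairing is known to be well defined it automatically extends the smooth formula, so the projection formula and Lemma~\ref{lemma:first-pairing} give the identity without a separate approximation scheme; the paper then observes that closedness of $u\omega$ (Lemma~\ref{lemma:current-isomorphism}) and $\pi_*d=d\pi_*$ make the right-hand side depend only on $[\gamma]$.
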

\begin{proof}
By the wavefront set calculus, the pairing $\langle \pi^{-1}(\gamma), u\omega\rangle$ is well defined whenever
\begin{equation}\label{eq:wavefrontset-condition}
N^*(\pi^{-1}(\gamma))\cap \textup{WF}(u)=\emptyset
\end{equation}
(see \cite[Corollary 8.2.7]{hormander03}  for instance). Since $\mathbb{V}\subset T(\pi^{-1}(\gamma))$, the conormal bundle $N^*(\pi^{-1}(\gamma))$ consists of a line contained in $(\R X)^*\oplus \mathbb{H}^*$. Lemma \ref{lemma:wavefrontset} and property \eqref{eq:transversality-cotangent} then tell us that the intersection with $\text{WF}(u)$ is indeed empty. It follows that the pairing $\langle \pi^{-1}(\gamma), u\omega\rangle$ is well defined and extends the pairing computed for $u\in \mathcal{C}^\infty(SM)$. 

We can then apply the projection formula \eqref{eq:projection-formula} and Lemma \ref{lemma:first-pairing}. By Lemma \ref{lemma:current-isomorphism}, the $2$-current $u\omega$ is closed if $u\in \mathcal{D}'_{\text{tr}}(SM)$, so $\star(\pi_{-1\ast}u+\pi_{1\ast}u)$ is also closed given that $\pi_* d = d\pi_*$, which implies that its integral only depends on the homology class $[\gamma]$. 
\end{proof}

As $\ell_1\star = -V\ell_1$, for any $u\in \mathcal{D}'(SM)$, we may write 
$$\star\left(\pi_{-1\ast}u+\pi_{1\ast}u\right)=i\left(\pi_{-1\ast}u-\pi_{1\ast}u\right).$$
Therefore, if $\tau \in H^0_J(M, \kappa)$, $[\gamma]\in H_1(M; \Z)$ and $\gamma$ is any thermostat geodesic whose homology class is $[\gamma]$, Lemma \ref{lemma:pairing-two} gives us
\begin{equation}\label{eq:translated-to-i}
2i\int_{\pi^{-1}(\gamma)} e_1(\tau)\omega=  \int_{[\gamma]}\tau.
\end{equation}

We can now tackle the proof of Proposition \ref{proposition-last-isomorphism}. 

\begin{proof}[Proof of Proposition \ref{proposition-last-isomorphism}]
Let $[\gamma]\in H_1(M; \Z)$ and let $\gamma$, $\tilde{\gamma}$ be two thermostat geodesics with respect to $(M, g, \lambda)$ and $(M, \tilde{g}, \tilde{\lambda})$, respectively, whose homology class is $[\gamma]$. Since $\phi$ is isotopic to the identity, we know that $[\phi(\pi^{-1}(\tilde{\gamma}))]=[\pi^{-1}(\gamma)]$ in $H_2(SM; \Z)$. 

We claim that the pairing $\langle \phi(\pi^{-1}(\tilde{\gamma})), e_1(\tau)\omega\rangle$ is well defined. In light of condition \eqref{eq:wavefrontset-condition} and Lemma \ref{lemma:wavefrontset}, which tell us that $\text{WF}(e_1\tau)\subset \mathcal{C}$, it is enough to show that
\begin{equation}\label{eq:first-cone-property}
N^*(\phi(\pi^{-1}(\tilde{\gamma})))\cap \mathcal{C}=\{0\}.
\end{equation}
We have seen in the proof of Proposition \ref{proposition:action-1} that $d\phi^\top(\mathcal{C})=\widetilde{\mathcal{C}}$, so combining this observation with the property
$$N^*(\phi(\pi^{-1}(\tilde{\gamma})))=(d\phi^{\top})^{-1}(N^*(\pi^{-1}(\tilde{\gamma})))$$
allows us to rewrite condition \eqref{eq:first-cone-property} as
\begin{equation*}
N^*(\pi^{-1}(\tilde{\gamma}))\cap \widetilde{\mathcal{C}}=\{0\}.
\end{equation*}
Again, since $\widetilde{\mathbb{V}}\subset T(\widetilde{S}M)$, we know that the conormal bundle $N^*(\pi^{-1}(\tilde{\gamma}))$ consists of a line in $(\R \widetilde{X})^*\oplus \widetilde{\mathbb{H}}^*$, so it trivially intersects the closed cone $\widetilde{\mathcal{C}}$, as desired. 

The $2$-current $e_1(\tau)\omega$ is closed by Lemma \ref{lemma:current-isomorphism}. By the Hodge decomposition theorem, we may hence write $e_1(\tau)\omega = \sigma+df$ for some harmonic $2$-current $\sigma$ and $1$-current $f$ with $\text{WF}(f)=\text{WF}(e_1\tau)$. Thanks to the wavefront set condition, the same argument as for $e_1(\tau)\omega$ then shows that both pairings $\langle \pi^{-1}(\gamma), df\rangle$ and $\langle \phi(\pi^{-1}(\tilde{\gamma})), df\rangle$ are well defined. They must be equal to $0$ since $df$ is exact. We thus get
$$\int_{\pi^{-1}(\gamma)}e_1(\tau)\omega = \int_{\pi^{-1}(\gamma)}\sigma =\int_{\phi(\pi^{-1}(\tilde{\gamma}))}\sigma = \int_{\phi(\pi^{-1}(\tilde{\gamma}))}e_1(\tau)\omega,$$
where in the second equality, we have used the facts that $\sigma$ is harmonic and that $[\pi^{-1}(\gamma)]=[\phi(\pi^{-1}(\tilde{\gamma}))]$ in $H_2(SM; \Z)$. 

We can now use equation \eqref{eq:translated-to-i} and unravel the definitions to obtain
\begin{align*}
\int_{[\gamma]}\tau &=2i\int_{\pi^{-1}(\gamma)} e_1(\tau)\omega\\
&=2i\int_{\phi(\pi^{-1}(\tilde{\gamma}))} e_1(\tau)\omega=2i\int_{\pi^{-1}(\tilde{\gamma})} \phi^*(e_1(\tau)\omega)\\
&=2i\int_{\pi^{-1}(\tilde{\gamma})} \Phi(e_1\tau)\tilde{\omega}=\int_{[\gamma]}\pi_{1\ast}\Phi(e_1\tau)=\int_{[\gamma]}\Psi\tau. \qedhere
\end{align*}
\end{proof}

\section{End of the proofs}\label{section:end-of-the-proofs}
\subsection{Torelli's theorem}
The work from the previous section, when combined with Torelli's theorem, tells us that a smooth orbit equivalence isotopic to the identity determines the class $[J]$ in the moduli space $\mathcal{M}(M)$ of complex structures on $M$. 

\begin{proposition}\label{proposition-moduli-space}
Let $(M, g, \lambda)$ and $(M, \tilde{g}, \tilde{\lambda})$, with $\lambda$ and $\tilde{\lambda}$ of Fourier degree $\leq 1$, be Anosov thermostats satisfying the attenuated tensor tomography problem of order $1$. If there exists a smooth orbit equivalence isotopic to the identity between them, then $[J]=[\widetilde{J}]$ in $\mathcal{M}(M)$. Equivalently, there exists a diffeomorphism $\psi : M\to M$ such that $\psi^*\widetilde{J}=J$ and $\psi^*\tilde{g}=e^{2f}g$ for some $f\in \mathcal{C}^\infty(M, \R)$. 
\end{proposition}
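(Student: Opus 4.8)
The plan is to deduce Proposition~\ref{proposition-moduli-space} from the period-preservation statement of Proposition~\ref{proposition-last-isomorphism} together with Torelli's theorem. First I would record the easy case: if $M$ has genus $0$ or $1$, then $\mathcal{T}(M)$ is either a point or $\mathbb{H}$, and the genus~$0$ case is immediate; for genus~$1$ one can argue directly (or simply note that the Anosov hypothesis forces genus $\geq 2$ via the Gauss--Bonnet-type obstruction, since a surface carrying an Anosov thermostat must have $\chi(M)<0$), so the substantive content is genus $\geq 2$.

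For genus $\geq 2$, the key steps in order are as follows. First, fix a canonical homology basis $\{a_j,b_j\}$ of $H_1(M;\Z)$; by Proposition~\ref{proposition-last-isomorphism} the isomorphism $\Psi:H^0_J(M,\kappa)\to H^0_{\widetilde J}(M,\kappa)$ satisfies $\int_{[\gamma]}\tau=\int_{[\gamma]}\Psi\tau$ for every $[\gamma]\in H_1(M;\Z)$ and every holomorphic $1$-form $\tau$. In particular, taking $[\gamma]=a_k$ shows that $\Psi$ carries the distinguished basis $\{\zeta_j\}$ of $H^0_J(M,\kappa)$ normalized by $\int_{a_j}\zeta_k=\delta_{jk}$ to a basis $\{\Psi\zeta_j\}$ of $H^0_{\widetilde J}(M,\kappa)$ with $\int_{a_j}\Psi\zeta_k=\delta_{jk}$; by uniqueness of the normalized basis, $\{\Psi\zeta_j\}$ is exactly the distinguished basis $\{\tilde\zeta_j\}$ of $H^0_{\widetilde J}(M,\kappa)$. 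Then taking $[\gamma]=b_j$ and applying period preservation to $\tau=\zeta_k$ gives
\begin{equation*}
(\Pi(\widetilde J))_{jk}=\int_{b_j}\tilde\zeta_k=\int_{b_j}\Psi\zeta_k=\int_{b_j}\zeta_k=(\Pi(J))_{jk},
\end{equation*}
so $\Pi(J)=\Pi(\widetilde J)$. By Torelli's theorem (Theorem~\ref{theorem:torelli}) there is an orientation-preserving diffeomorphism $\psi:M\to M$ with $\psi^*\widetilde J=J$.

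It remains to upgrade $\psi$ to a diffeomorphism isotopic to the identity, equivalently to show $[J]=[\widetilde J]$ in $\mathcal{T}(M)$ rather than merely in $\mathcal{M}(M)$. Here I would use the trick flagged in the introduction: the whole construction is functorial under passing to finite covers. Given a finite cover $p:M'\to M$, the orbit equivalence $\phi$ lifts to a smooth orbit equivalence $\phi'$ between the lifted Anosov thermostats on $S M'$ and $\widetilde{S}M'$ which is again isotopic to the identity (the lift of an isotopy is an isotopy), and the lifted thermostats still have $\lambda$ of Fourier degree $\leq 1$ and satisfy the attenuated tensor tomography problem of order~$1$, so the same argument gives $\Pi(J')=\Pi(\widetilde J')$ for the induced complex structures. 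One then invokes the standard fact that a mapping class of $M$ which acts trivially on Teichm\"uller space after pulling back to \emph{every} finite cover (equivalently, acts trivially on the period matrix of every finite cover) must be isotopic to the identity --- concretely, a surjection $\mathrm{MCG}(M)\to \mathrm{Sp}$ (or the residual finiteness of $\mathrm{MCG}(M)$ together with the fact that the homological/Torelli obstructions separate on covers) --- to conclude that $\psi$ may be chosen isotopic to the identity. Finally, once $\psi^*\widetilde J=J$ with $\psi$ isotopic to the identity, $\psi^*\tilde g$ and $g$ induce the same complex structure, hence lie in the same conformal class, i.e.\ $\psi^*\tilde g=e^{2f}g$ for some $f\in\mathcal{C}^\infty(M,\R)$, which is the asserted equivalence.

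The main obstacle is the last step: extracting \emph{isotopic to the identity} from Torelli rather than just a bare diffeomorphism. Torelli only gives $\psi\in\mathrm{MCG}(M)$ modulo the action that fixes the period matrix, and the period matrix does not see the Torelli subgroup of $\mathrm{MCG}(M)$, so one genuinely needs the cover-stability argument and a clean statement that the family of homological representations over all finite covers is faithful on $\mathrm{MCG}(M)$; I would expect to cite this (it is essentially the content used in \cite{guillarmou25}) rather than reprove it. Everything else is bookkeeping: checking that the hypotheses (Anosov, Fourier degree $\leq 1$, attenuated tensor tomography of order~$1$) are inherited by finite covers --- the first two are obvious, and the third follows because the Carleman estimate of Theorem~\ref{theorem:carleman}, being a fibrewise-Fourier estimate driven by the pointwise sign of $\mathbb{K}$, pulls back to any cover --- and that the final conformal conclusion is just the standard dictionary between complex structures and conformal classes on a surface.
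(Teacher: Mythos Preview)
Your first three steps---using Proposition~\ref{proposition-last-isomorphism} to show $\Pi(J)=\Pi(\widetilde J)$ via the normalized basis, then applying Torelli---are correct and are exactly the paper's proof of this proposition. That is already the complete argument: the statement only asserts $[J]=[\widetilde J]$ in the \emph{moduli space} $\mathcal{M}(M)$, i.e.\ the existence of some diffeomorphism $\psi$ with $\psi^*\widetilde J=J$, not one isotopic to the identity.

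Your final step, upgrading to $\mathcal{T}(M)$, is not part of this proposition; in the paper it is the separate Proposition~\ref{proposition:same-teichmuller-class}. More importantly, your argument for that upgrade contains a genuine gap under the present hypotheses. You claim the attenuated tensor tomography property of order~$1$ lifts to finite covers, justifying this by appeal to the Carleman estimate of Theorem~\ref{theorem:carleman}. But that estimate requires $\mathbb{K}<0$, which is \emph{not} assumed here: the proposition only assumes the thermostats are Anosov and satisfy the tomography property, with no curvature hypothesis. The paper flags exactly this obstruction---``satisfying the attenuated tensor tomography problem of order~$1$ is not preserved \textit{a priori}'' under covers---and this is precisely why Proposition~\ref{proposition:same-teichmuller-class} must impose the extra hypotheses (Anosov magnetic, or Gaussian with $\mathbb{K}<0$) that guarantee the property on every finite cover. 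Under the bare hypotheses of Proposition~\ref{proposition-moduli-space}, the finite-cover argument is not available, and the conclusion in $\mathcal{T}(M)$ is not claimed.
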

\begin{proof}
By Proposition \ref{proposition-last-isomorphism}, the map $\Psi: H^0_J(M, \kappa)\to H^0_{\widetilde{J}}(M, \kappa)$ is a period-preserving $\mathbb{C}$-linear isomorphism. This means that $(M, J)$ and $(M, \widetilde{J})$ have the same period matrix. Indeed, given a canonical basis $\{a_j, b_j\}$ of the homology $H_1(M;\Z)$ on $M$, let $\{\zeta_j\}$ be a basis for $H_J^0(M,\kappa)$ such that property \eqref{eq:basis-condition} is satisfied. Then, $\{\Psi\zeta_j\}$ is a basis for $H^0_{\widetilde{J}}(M, \kappa)$ such that property \eqref{eq:basis-condition} is also satisfied, and
$$(\Pi(J))_{jk}=\int_{b_j}\zeta_k =\int_{b_j}\Psi\zeta_k = (\Pi(\widetilde{J}))_{jk}.$$
Since the surface $M$ must be of genus $\geq 2$, Theorem \ref{theorem:torelli} tells us that there exists an orientation-preserving diffeomorphism $\psi : M\to M$ such that $\psi^*J= \widetilde{J}$. 
\end{proof}

In this section, we want to show something stronger, namely, that the class of the complex structure $J$ is determined in Teichmüller space $\mathcal{T}(M)$.

\begin{proposition}\label{proposition:same-teichmuller-class}
Let $(M, g, \lambda)$ and $(M, \tilde{g}, \tilde{\lambda})$ be either:
\begin{enumerate}
\item[(a)] two Anosov magnetic systems; or
\item[(b)] two Gaussian thermostats with $\mathbb{K}, \widetilde{\mathbb{K}}<0$.
\end{enumerate}
If there exists a smooth orbit equivalence isotopic to the identity between them, then $[J]=[\widetilde{J}]$ in $\mathcal{T}(M)$. Equivalently, there exists a diffeomorphism $\psi : M\to M$ isotopic to the identity such that $\psi^*\widetilde{J}=J$ and $\psi^*\tilde{g}=e^{2f}g$ for some $f\in \mathcal{C}^\infty(M,\R)$. 
\end{proposition}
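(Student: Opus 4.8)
The plan is to upgrade Proposition \ref{proposition-moduli-space} from a statement in moduli space to one in Teichmüller space by exploiting the functoriality of the whole construction under passing to finite covers. The starting point is that Proposition \ref{proposition-moduli-space} already gives an orientation-preserving diffeomorphism $\psi : M \to M$ with $\psi^*\widetilde{J} = J$; the only thing missing is that $\psi$ is isotopic to the identity, equivalently that $[J] = [\widetilde{J}]$ in $\mathcal{T}(M)$ and not merely in $\mathcal{M}(M) = \mathcal{T}(M)/\mathrm{MCG}(M)$. The obstruction to this is precisely that $\psi$ might act nontrivially on $H_1(M;\Z)$ (or more subtly, fix homology but still be non-isotopic to the identity for a surface of genus $\geq 2$ — but the Torelli subgroup is detected at a deeper level and one typically handles the homology action first). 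So I would first isolate the linear map $\psi_* : H_1(M;\Z) \to H_1(M;\Z)$ induced by $\psi$ and try to show it is the identity.

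The key observation is that the orbit equivalence $\phi$, being isotopic to the identity on $SM \cong \widetilde{S}M$, acts as the identity on $H_2(SM;\Z)$ and hence (via the fibration $\pi$ and the pairing formula \eqref{eq:translated-to-i}) the map $\Psi$ preserves periods relative to the \emph{same} marking of $H_1(M;\Z)$ — this is exactly what the proof of Proposition \ref{proposition-moduli-space} used. On the other hand, $\psi^*\widetilde{J} = J$ means $\psi$ induces a $\mathbb{C}$-linear isomorphism $\psi^* : H^0_{\widetilde J}(M,\kappa) \to H^0_J(M,\kappa)$, and by naturality of periods under pullback, $(\Pi(\widetilde J))_{jk}$ computed against the basis $\{a_j, b_j\}$ equals $(\Pi(J))_{jk}$ computed against $\{\psi_* a_j, \psi_* b_j\}$. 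Combining with $\Pi(J) = \Pi(\widetilde J)$ (same marking), one gets that $\Pi(J)$ is invariant under the symplectic matrix $A = (\psi_*)$ acting on the Siegel upper half-space $\mathcal{H}(M)$ in the standard way. This alone does not force $A = I$: the stabilizer of a period matrix in $\mathrm{Sp}(2h,\Z)$ can be nontrivial (it is the automorphism group of the Riemann surface).

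This is where the finite cover argument enters, and I expect it to be the main obstacle to write cleanly. The point, following the strategy sketched in the introduction and in \cite{guillarmou25}, is that for any finite cover $p : M' \to M$, the orbit equivalence $\phi$ lifts to a smooth orbit equivalence $\phi' : \widetilde{S}M' \to SM'$ which is again isotopic to the identity (a lift of a map isotopic to $\mathrm{id}_M$ through the covering is isotopic to $\mathrm{id}_{M'}$, choosing the lift that commutes with the deck action, and the thermostat/magnetic conditions and Anosov/curvature hypotheses all pull back), so the conclusion of Proposition \ref{proposition-moduli-space} applies on $M'$ as well, giving $\psi' : M' \to M'$ with $(\psi')^*\widetilde{J}' = J'$. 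One then arranges that $\psi'$ covers $\psi$, so the action of $\psi_*$ on $H_1(M';\Z)$ is determined. Running the period argument on a sufficiently rich family of covers (e.g. all abelian covers, or covers corresponding to finite-index characteristic subgroups of $\pi_1(M)$) forces the induced automorphism to be trivial on the inverse limit of homologies, hence $\psi_* = \mathrm{id}$ on $H_1(M;\Z)$ and in fact $\psi$ acts trivially on $\pi_1(M)$; a surface of genus $\geq 2$ is aspherical with centerless $\pi_1$, so a self-map acting trivially on $\pi_1$ and homotopic to a diffeomorphism is isotopic to the identity. Concretely I would phrase it as: the family $\{\psi'_*\}$ over all finite covers defines an element of the mapping class group that fixes the Teichmüller point $[J]$ together with all its lifts, and by the standard fact (used in \cite{guillarmou25}) that the only such element is the identity, $\psi$ is isotopic to $\mathrm{id}_M$. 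The conformal statement $\psi^*\widetilde{g} = e^{2f}g$ then follows because $\psi^*\widetilde{J} = J$ means $\psi^*\widetilde{g}$ and $g$ induce the same complex structure on the surface, and on a surface two metrics with the same complex structure (equivalently the same conformal class) differ by a positive conformal factor $e^{2f}$ with $f \in \mathcal{C}^\infty(M,\R)$.

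To summarize the order of steps: (1) invoke Proposition \ref{proposition-moduli-space} to get $\psi$ with $\psi^*\widetilde J = J$; (2) show $\phi$ lifts to every finite cover $M'$ as an orbit equivalence isotopic to the identity, with all hypotheses (Anosov, $\mathbb{K} < 0$ or magnetic) preserved, so Proposition \ref{proposition-moduli-space} yields $\psi'$ covering $\psi$; (3) combine $\Pi(J) = \Pi(\widetilde J)$ (same marking, from the period-preserving isomorphism $\Psi$) with $\psi^*\widetilde J = J$ (naturality of periods) to see that $\psi_*$, and each $\psi'_*$, stabilizes the period matrix; (4) conclude via the finite-cover rigidity that the resulting mapping class is trivial, so $\psi$ is isotopic to $\mathrm{id}_M$; (5) deduce $\psi^*\widetilde g = e^{2f}g$ from $\psi^*\widetilde J = J$. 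The delicate point is step (2)–(4): making precise that "enough covers" kills the mapping class, and checking that the lifted orbit equivalences are genuinely isotopic to the identity and compatible across the tower; I would lean on the argument already present in \cite{guillarmou25} for this and only indicate the modifications needed to accommodate a general orbit equivalence (rather than a conjugacy) and the non-volume-preserving nature of the flow, both of which have by this point been absorbed into the machinery of $\Phi$ and $\mathcal{F}(SM)$.
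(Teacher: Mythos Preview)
Your plan has the right ingredients---lifting to finite covers and invoking the result from \cite{guillarmou25}---but the packaging is more convoluted than necessary and contains a gap. The paper's proof is a direct contradiction: assume $[J]\neq[\widetilde J]$ in $\mathcal{T}(M)$, invoke Lemma~\ref{lemma-finite-cover} (quoted from \cite{guillarmou25}) to obtain a \emph{single} finite cover $M'\to M$ on which the lifts $[J']$ and $[\widetilde J']$ fail to lie in the same $\mathrm{MCG}(M')$-orbit, lift the orbit equivalence, verify that the hypotheses (Anosov in case~(a), negative thermostat curvature in case~(b), hence tensor tomography of order~$1$ by Theorem~\ref{theorem:thermostat-tensor-tomography}) persist on the cover, and apply Proposition~\ref{proposition-moduli-space} upstairs to get a contradiction. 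No $\psi$ is ever tracked across covers.

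Your version instead fixes a $\psi$ from Proposition~\ref{proposition-moduli-space} on the base and tries to show its mapping class is trivial by arranging compatible $\psi'$ on every cover. The gap is in step~(2): Torelli's theorem gives \emph{some} biholomorphism $\psi'$ on $M'$, with no canonicity, so there is no reason $\psi'$ should cover $\psi$ or be coherent across a tower of covers; the assertion ``one then arranges that $\psi'$ covers $\psi$'' cannot be justified from what is available. Your eventual reformulation (``the family $\{\psi'_*\}$ defines an element of the mapping class group that fixes $[J]$ together with all its lifts'') is also not quite the statement of Lemma~\ref{lemma-finite-cover}, which compares two given complex structures rather than analysing the stabiliser of one. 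The contradiction framing sidesteps all of this: you never need to know which $\psi$ Torelli produces, only that one exists on each cover, and Lemma~\ref{lemma-finite-cover} hands you a cover where even that existence fails.
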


The reason we need to specify the nature of the two thermostats, as opposed to Proposition \ref{proposition-moduli-space} which deals with general Anosov thermostats of Fourier degree $\leq 1$, is that our proof relies on the following technical lemma (see \cite[Lemma 3.8]{guillarmou25}).
\begin{lemma}\label{lemma-finite-cover}
Let $J$ and $\widetilde{J}$ be two complex structures on $M$ compatible with orientation such that $[J]\neq[\widetilde{J}]$ in $\mathcal{T}(M)$. Then, there exists a finite cover $M'\to M$ such that the lifts $[J'],[\widetilde{J}']\in \mathcal{T}(M')$ are not in the same $\textup{MCG}(M')$-orbit. 
\end{lemma}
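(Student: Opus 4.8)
The plan is to build the cover explicitly, working on the hyperbolic side and exploiting the separability of surface groups. Since $M$ has genus at least two (the only relevant case), the classes $[J]$ and $[\widetilde{J}]$ are represented by hyperbolic metrics $h$ and $\widetilde{h}$ on $M$, and for any finite cover $p:M'\to M$ the lifted metrics $h',\widetilde{h}'$ uniformize the lifted complex structures $J',\widetilde{J}'$. Consequently $[J']$ and $[\widetilde{J}']$ lie in a common $\textup{MCG}(M')$-orbit precisely when $(M',J')$ and $(M',\widetilde{J}')$ are biholomorphic, and by uniqueness of the uniformizing metric this happens precisely when $(M',h')$ and $(M',\widetilde{h}')$ are isometric. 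It therefore suffices to exhibit a finite cover for which the two lifted hyperbolic surfaces have different systoles, the systole being an isometry invariant.

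First I would fix a distinguishing geodesic. Because the length functions of closed geodesics separate the points of $\mathcal{T}(M)$ (see \cite{farb12}) and $[J]\neq[\widetilde{J}]$ there, there is a primitive free homotopy class $c_0$ on $M$ whose $h$-length $\ell_h(c_0)$ differs from its $\widetilde{h}$-length $\ell_{\widetilde{h}}(c_0)$; let $\gamma_0\in\pi_1(M)$ represent it. This step uses only the hypothesis $[J]\neq[\widetilde{J}]$ in $\mathcal{T}(M)$, whether or not $[J]$ and $[\widetilde{J}]$ already differ in the moduli space $\mathcal{M}(M)$.

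Next I would engineer the cover. By compactness there are only finitely many conjugacy classes in $\pi_1(M)$ whose $h$-length is strictly below $\ell_h(c_0)$, and finitely many whose $\widetilde{h}$-length is strictly below $\ell_{\widetilde{h}}(c_0)$; since $\ell_h(\gamma_0)=\ell_h(c_0)$ and $\ell_{\widetilde{h}}(\gamma_0)=\ell_{\widetilde{h}}(c_0)$, none of these classes meets $\langle\gamma_0\rangle$. Invoking the separability of surface groups — Scott's theorem that $\pi_1(M)$ is LERF together with its conjugacy separability — I would construct a finite-index subgroup $H\leq\pi_1(M)$ containing $\gamma_0$ (hence with $\gamma_0$ still primitive in $H$) and disjoint from each of these finitely many conjugacy classes. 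For the associated cover $p:M'\to M$, the curve $c_0$ lifts (as $\gamma_0\in H$) to a primitive closed curve $c_0'$ in $M'$, which is a closed geodesic of length $\ell_h(c_0)$ for $h'$ and of length $\ell_{\widetilde{h}}(c_0)$ for $\widetilde{h}'$; meanwhile the disjointness guarantees that no primitive closed geodesic of $(M',h')$ is shorter than $\ell_h(c_0)$ and none of $(M',\widetilde{h}')$ is shorter than $\ell_{\widetilde{h}}(c_0)$. Hence the systole of $(M',h')$ equals $\ell_h(c_0)$ and that of $(M',\widetilde{h}')$ equals $\ell_{\widetilde{h}}(c_0)$; as these differ, $(M',h')$ and $(M',\widetilde{h}')$ are not isometric, so $[J']$ and $[\widetilde{J}']$ lie in distinct $\textup{MCG}(M')$-orbits.

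The step I expect to be the main obstacle is the group-theoretic construction of $H$: raw subgroup separability only separates $\langle\gamma_0\rangle$ from a single element, whereas here $\gamma_0$ must be kept inside $H$ while entire conjugacy classes of short elements are excluded, and for two metrics simultaneously. This is where the stronger separability properties of surface groups enter — conjugacy separability, and separability of the relevant double cosets — rather than LERF alone. The remaining ingredients are soft: the reduction to hyperbolic metrics via uniformization, the discreteness of the length spectrum together with the finiteness of the set of closed geodesics below any given length, the persistence of primitivity under passage to subgroups, and the isometry-invariance of the systole.
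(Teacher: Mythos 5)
Your overall architecture is sound and is, as far as one can tell, independent of the paper's (the paper does not prove this lemma but outsources it to \cite{guillarmou25}): you reduce to distinguishing the uniformizing hyperbolic metrics on a cover by an isometry invariant, and the reduction steps (same $\textup{MCG}(M')$-orbit $\Rightarrow$ biholomorphic $\Rightarrow$ isometric $\Rightarrow$ equal systoles), the existence of a class $c_0$ with $\ell_h(c_0)\neq\ell_{\tilde h}(c_0)$, and the finiteness of the set of short conjugacy classes are all correct.

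The gap is the one you flag yourself, and it is genuine: you need a finite-index subgroup $H\leq\pi_1(M)$ with $\gamma_0\in H$ and $H\cap C(g_i)=\emptyset$ for finitely many prescribed conjugacy classes $C(g_i)$, and neither LERF nor conjugacy separability, as stated, delivers this. The precise property required is that the cyclic subgroup $\langle\gamma_0\rangle$ be \emph{conjugacy distinguished}, i.e.\ that $\bigcup_{x\in\pi_1(M)}x\langle\gamma_0\rangle x^{-1}$ be closed in the profinite topology: given that, one takes a finite-index normal $N$ separating each $g_i$ from this union and sets $H=N\langle\gamma_0\rangle$. Conjugacy separability alone only lets you separate $g_i$ from a \emph{single} power $\gamma_0^{j}$ in some finite quotient; to conclude you would need one finite quotient $q$ in which $q(g_i)$ is conjugate to no power of $q(\gamma_0)$ simultaneously, and since the order of $q(\gamma_0)$ is not known in advance this is circular — exactly the difference between ``conjugacy separable'' and ``conjugacy distinguished''. (Normality tricks do not rescue you either: a normal subgroup containing $\gamma_0$ contains its normal closure, over which you have no control.) The statement you need is in fact true for closed surface groups — e.g.\ it follows from Minasyan's hereditary conjugacy separability of virtually special groups together with the fact that cyclic subgroups of surface groups are virtual retracts (Haglund--Wise) — but it is a substantive theorem, not a consequence of Scott's LERF, and your proof is incomplete until you either cite it precisely or prove it. With that single input supplied, the rest of your argument (the lift of $c_0$ closing up with length exactly $\ell_h(c_0)$, the systoles of $(M',h')$ and $(M',\tilde h')$ being $\ell_h(c_0)$ and $\ell_{\tilde h}(c_0)$, hence non-isometric) goes through.
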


Indeed, when lifting thermostats to finite covers, the Anosov property and negative thermostat curvature are preserved; however, satisfying the attenuated tensor tomography problem of order $1$ is not preserved \textit{a priori}.

\begin{proof}[Proof Proposition \ref{proposition:same-teichmuller-class}] Suppose, for the sake of contradiction, that $[J]\neq [\widetilde{J}]$ in $\mathcal{T}(M)$. By Lemma \ref{lemma-finite-cover}, there exists a finite cover $p:M'\to M$ such that the lifts $[J']$ and $[\widetilde{J}']$ are not in the same $\text{MCG}(M')$-orbit. 

Since the smooth orbit equivalence between the flows of $(M, g, \lambda)$ and $(M, \tilde{g}, \tilde{\lambda})$ is isotopic to the identity, it can be lifted to a smooth orbit equivalence isotopic to the identity between the flows of $(M, p^*g, \lambda\circ dp)$ and $(M, p^*\tilde{g}, \tilde{\lambda}\circ dp)$. %We can now split into the cases (a) and (b). 

In case (a), we know that the lifted Anosov magnetic flows on $M'$ are again Anosov because the cover is finite. They hence satisfy the (attenuated) tensor tomography problem of order $1$. In case (b), we know that the lifted Gaussian thermostats also have negative thermostat curvature since the property is local. By Theorem \ref{theorem:thermostat-tensor-tomography}, we thus conclude that they satisfy the attenuated tensor tomography problem of order $1$. 

We can then apply Proposition \ref{proposition-moduli-space} to obtain a contradiction.
\end{proof}

Proposition \ref{proposition:same-teichmuller-class} gives us most of Theorems \ref{theorem:second-theorem} and \ref{theorem:main-theorem}. All that is left is studying the rigidity of the function $\lambda$ in each case. 

\subsection{Rigidity of the magnetic field}  
Since $\lambda\in \mathcal{C}^\infty(SM,\R)$ does not depend on velocity in the magnetic case, we will think of it as living in $\mathcal{C}^\infty(M,\R)$. 

\begin{lemma}
Let $(M, g, \lambda)$ be an Anosov magnetic system. Then, the $2$-form $\omega$ on $SM$ defined in equation \eqref{eq:definition-special-form} is exact.
\end{lemma}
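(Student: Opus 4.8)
The plan is to reduce exactness of $\omega$ to the vanishing of a de Rham class on the circle bundle $\pi\colon SM\to M$, which will follow from the Gysin sequence together with the fact that an Anosov structure forces $M$ to have negative Euler characteristic.

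First I would rewrite $\omega$ in a convenient form. Since $F=X+\lambda V$ with $\lambda$ a $\pi$-basic function in the magnetic case, the computation leading to \eqref{eq:useful-for-later} gives $\omega=\iota_F\mu=\beta\wedge\psi+\lambda\,\alpha\wedge\beta$. Using $d\alpha=\psi\wedge\beta$ (so that $\beta\wedge\psi=-d\alpha$) and $\pi^*\mu_a=\alpha\wedge\beta$, this becomes
\[
\omega=-d\alpha+\pi^*(\lambda\mu_a).
\]
The term $-d\alpha$ is exact, so $\omega$ is exact if and only if $\pi^*(\lambda\mu_a)$ is exact on $SM$. Now $\lambda\mu_a$ is automatically a \emph{closed} $2$-form on the surface $M$, hence so is its pullback, and it therefore suffices to show that $[\pi^*(\lambda\mu_a)]=0$ in $H^2(SM;\R)$.

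For this, recall that the existence of an Anosov structure forces $M$ to have genus $\geq 2$ (see \cite{ghys84}), so $\chi(M)\neq 0$. The Euler class $e(SM)\in H^2(M;\Z)$ of the circle bundle $\pi\colon SM\to M$ equals $\chi(M)$ times a generator of $H^2(M;\Z)$, hence it is non-zero, and the cup product map $\,\cup\,e(SM)\colon H^0(M;\R)\to H^2(M;\R)$ is an isomorphism of one-dimensional spaces. By exactness of the Gysin sequence
\[
H^0(M;\R)\xrightarrow{\ \cup\,e(SM)\ }H^2(M;\R)\xrightarrow{\ \pi^*\ }H^2(SM;\R),
\]
the pullback $\pi^*\colon H^2(M;\R)\to H^2(SM;\R)$ is the zero map. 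In particular $[\pi^*(\lambda\mu_a)]=\pi^*[\lambda\mu_a]=0$, so $\pi^*(\lambda\mu_a)$ is exact on $SM$, and hence so is $\omega$.

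The argument is short; the only ingredient beyond routine linear algebra and the Gysin sequence is the non-vanishing of $\chi(M)$, and I expect that recognizing this cohomological reformulation — rather than hunting for an explicit primitive of $\omega$ — is the main point. It is worth noting that the analogous statement genuinely fails on the torus, where $\pi^*$ is injective on $H^2$ and exactness of $\omega$ would require the flux condition $\int_M\lambda\mu_a=0$; this is consistent with the Anosov hypothesis being essential.
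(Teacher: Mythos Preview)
Your argument is correct: reducing to the vanishing of $\pi^*$ on $H^2(M;\R)$ via the Gysin sequence is a clean way to see that $\omega$ is exact, and your identification of the Anosov hypothesis as forcing $\chi(M)\neq 0$ is exactly the right observation.

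The paper takes a more hands-on route. After the same reduction $\omega=-d\alpha+\pi^*(\lambda\mu_a)$, it writes $\lambda\mu_a=cK_g\mu_a+d\varrho$ (possible since $[K_g\mu_a]$ generates $H^2(M;\R)$ by Gauss--Bonnet) and then uses the structure equation $d\psi=-\pi^*(K_g\mu_a)$ to exhibit the explicit primitive $\tau=-\alpha-c\psi+\pi^*\varrho$. This is really the same mechanism as yours in disguise: the identity $d\psi=-\pi^*(K_g\mu_a)$ is precisely the Chern--Weil statement that the curvature of the circle bundle represents the Euler class, which is what makes $\pi^*$ vanish on $H^2$ in the Gysin sequence. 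What the explicit construction buys, however, is crucial for the sequel: the primitive $\tau$ and the formula $\tau(F)=-1-c\pi^*\lambda+\ell_1\varrho$ are used directly in the proof of the next proposition (the rigidity statement for the magnetic field), where one contracts $\phi^*\tau-\tilde\tau$ with $\widetilde F$ and integrates. Your abstract argument proves the lemma but would leave you needing to produce this primitive anyway when you get there.
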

\begin{proof}
By equation \eqref{eq:useful-for-later}, we have
$$\omega = -d\alpha + \pi^*(\lambda\mu_a).$$
By the Gauss--Bonnet theorem and the fact $M$ must be of genus $\geq 2$, we know that
$$\int_{M}K_g \mu_a = 2\pi \chi(M)< 0,$$
so $[K_g \mu_a]\neq 0$ in $H^2(M; \R)\cong \R$. It follows that we may write $\lambda\mu_a =c K_g \mu_a + d\varrho$ for some $1$-form $\varrho$ on $M$ and constant $c\in \R$. The constant $c$ is explicitly given by
\begin{equation}\label{eq:constant-c}
c=\dfrac{1}{2\pi \chi(M)}\int_M \lambda\mu_a.
\end{equation} 
However then, since $d\psi = - \pi^*(K_g\mu_a)$, we obtain
\begin{equation*}
\pi^*(\lambda  \mu_a)=c \pi^*(K_g \mu_a) + d\pi^*\varrho=d(-c\psi +\pi^*\varrho),
\end{equation*}
which allows us to write $\omega = d\tau$ for the 1-form
\begin{equation}\label{eq:primitive}
\tau \coloneqq-\alpha-c\psi +\pi^*\varrho,
\end{equation}
as desired.
\end{proof}

Knowing how to find primitives of $\omega$ in the magnetic case then unlocks the following (compare to \cite[Lemma 4.1]{paternain06}).

\begin{proposition}\label{proposition:magnetic-katok}
Let $(M, g, \lambda)$ and $(M, g, \tilde{\lambda})$ be two Anosov magnetic systems with the same background metric $g$. Suppose that there is a smooth conjugacy $\phi : SM\to SM$ isotopic to the identity between them. Then, we have $[\tilde{\lambda}\mu_a]=\pm [\lambda\mu_a]$ in $H^2(M; \R)$. Moreover, $\lambda=0$ if and only if $\tilde{\lambda}=0$.
\end{proposition}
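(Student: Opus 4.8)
The plan is to have the conjugacy transport the distinguished $2$-form $\omega=\iota_F\mu$ onto its analogue $\tilde\omega$ for the second system, and then to read off the total flux $\int_M\lambda\,\mu_a$ from a single conjugacy-invariant number built from $\omega$, namely its helicity. Since $\tilde g=g$, I identify $\widetilde{S}M=SM$ with common Liouville form $\mu=\tilde\mu$. First I would establish that $\phi^*\omega=\tilde\omega$: writing $\phi_*\widetilde{F}=F$ (here it matters that $\phi$ is a genuine conjugacy, not just an orbit equivalence) and $\phi^*\mu=(\det\phi)\mu$ with $\det\phi>0$ (as $\phi$ is isotopic to the identity), and using that magnetic flows preserve $\mu$, so $\div_\mu F=\div_{\tilde\mu}\widetilde{F}=0$ by Lemma~\ref{lemma:divergence}, Lemma~\ref{lemma:relation-divergences} with $c\equiv1$ yields $\widetilde{F}(\ln\det\phi)=0$. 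Since Anosov thermostat flows are transitive, $\det\phi$ is constant, and integrating $\phi^*\mu=(\det\phi)\mu$ over $SM$ (a degree-one diffeomorphism) forces $\det\phi=1$; hence $\phi^*\omega=\phi^*(\iota_F\mu)=\iota_{\widetilde{F}}(\phi^*\mu)=\iota_{\widetilde{F}}\mu=\tilde\omega$.

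By the lemma preceding this proposition, $\omega$ and $\tilde\omega$ are closed and exact on the closed $3$-manifold $SM$, so for any primitive $\tau$ of $\omega$ the number $\operatorname{hel}(\omega):=\int_{SM}\tau\wedge d\tau$ is well defined: two primitives differ by a closed $1$-form $\zeta$, and $\int_{SM}\zeta\wedge\omega=0$ because $[\omega]=0$ in de Rham cohomology. Since $\phi$ preserves orientation and $\phi^*\tau$ is a primitive of $\tilde\omega$, we get $\operatorname{hel}(\tilde\omega)=\int_{SM}\phi^*(\tau\wedge d\tau)=\operatorname{hel}(\omega)$. I would then evaluate $\operatorname{hel}(\omega)$ with the explicit primitive $\tau=-\alpha-c\psi+\pi^*\varrho$ from \eqref{eq:primitive}: using \eqref{eq:useful-for-later}, $\mu=\alpha\wedge\beta\wedge\psi$, and that $\pi^*\varrho$ lies in the span of $\alpha$ and $\beta$, a short wedge computation gives $\tau\wedge\omega=(-1-c\lambda+\ell_1\varrho)\mu$. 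Integrating, the $\ell_1\varrho$-term drops out (it integrates to zero on each fibre) and $\int_{SM}\lambda\,\mu=2\pi\int_M\lambda\,\mu_a$, so by \eqref{eq:constant-c}
\[
\operatorname{hel}(\omega)=-\int_{SM}\mu-\frac{1}{\chi(M)}\Big(\int_M\lambda\,\mu_a\Big)^{2}.
\]
Since $\int_{SM}\mu$ depends only on the common metric $g$ and $\chi(M)\neq0$, the identity $\operatorname{hel}(\omega)=\operatorname{hel}(\tilde\omega)$ forces $\big(\int_M\lambda\,\mu_a\big)^2=\big(\int_M\tilde\lambda\,\mu_a\big)^2$, i.e.\ $[\tilde\lambda\,\mu_a]=\pm[\lambda\,\mu_a]$ in $H^2(M;\R)$.

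For the final statement, by symmetry (replace $\phi$ with $\phi^{-1}$) it suffices to show $\lambda=0\Rightarrow\tilde\lambda=0$; so assume the first flow is the geodesic flow of $g$. Fix a nontrivial free homotopy class $\mathfrak{c}$, let $\gamma$ be the unique closed $g$-geodesic in $\mathfrak{c}$ (\cite{ghys84}), which minimizes $g$-length in $\mathfrak{c}$ because an Anosov geodesic flow has no conjugate points (geodesics lift to minimizing geodesics in the universal cover), and let $c$ be the unique closed $\tilde\lambda$-magnetic geodesic in $\mathfrak{c}$. Both are unit speed for $g$, so their $g$-lengths equal their periods as closed orbits, and these periods coincide because $\phi$ is a conjugacy; hence $c$ also realizes the minimal $g$-length in $\mathfrak{c}$, so $c$ is itself a $g$-geodesic. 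Comparing $\nabla^g_{\dot c}\dot c=0$ with the magnetic equation $\nabla^g_{\dot c}\dot c=\tilde\lambda(c)\,J\dot c$ and using $J\dot c\neq0$ gives $\tilde\lambda\equiv0$ along $c$; by uniqueness $c=\gamma$, so $\tilde\lambda$ vanishes along every closed $g$-geodesic. These are dense in $M$ (closed orbits of the transitive Anosov geodesic flow are dense in $SM$), so $\tilde\lambda\equiv0$ by continuity.

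The step I expect to carry the argument is the computation of $\operatorname{hel}(\omega)$ in the second paragraph: the point is that the helicity of $\omega$ is a conjugacy invariant which detects $\lambda$ only through $\big(\int_M\lambda\,\mu_a\big)^2$, so the sign ambiguity is genuinely unavoidable and nothing sharper about $\lambda$ can be obtained this way. The identity $\det\phi=1$ and the length-minimisation argument for the last claim are comparatively soft once the correct objects are in play.
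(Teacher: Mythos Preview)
Your proof is correct. For the first conclusion the two arguments are essentially the same computation in different clothing: since $\tau(F)\,\mu=\tau\wedge\iota_F\mu=\tau\wedge\omega$, your helicity $\operatorname{hel}(\omega)=\int_{SM}\tau\wedge\omega$ is exactly $\int_{SM}\tau(F)\,\mu$, and the paper obtains the identity $\int\tau(F)\,\mu=\int\tilde\tau(\widetilde F)\,\mu$ by deriving the pointwise relation $\phi^*\tau-\tilde\tau=\pi^*\varphi+df$ (equation~\eqref{eq:almost-there} after contracting with $\widetilde F$) and integrating. So the helicity invariance is a clean repackaging rather than a new mechanism.

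For the implication $\lambda=0\Rightarrow\tilde\lambda=0$ your route is genuinely different. The paper keeps the pointwise identity \eqref{eq:almost-there}, specialises to $c=\tilde c=0$, $\varrho=0$, integrates over each closed $\tilde\lambda$-magnetic orbit to get that the $1$-form $\tilde\varrho+\varphi$ has vanishing periods, and then invokes the smooth Liv\v{s}ic theorem together with the injectivity of the magnetic X-ray transform on $1$-forms \cite{dairbekov05} to conclude $d\tilde\varrho=0$, hence $\tilde\lambda=0$. You instead use that a conjugacy isotopic to the identity preserves the marked length spectrum, combine this with Klingenberg's theorem (Anosov geodesic flows have no conjugate points, so closed geodesics minimise $g$-length in their free homotopy classes) to force every closed $\tilde\lambda$-magnetic geodesic to be an honest $g$-geodesic, and finish by density of closed geodesics. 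Your argument is more geometric and sidesteps the Liv\v{s}ic/X-ray machinery at the cost of importing Klingenberg's theorem; the paper's argument stays entirely within the analytic framework already built and reuses the primitive $\tau$. The one point worth making explicit in your write-up is that ``these periods coincide'' uses not only that $\phi$ is a conjugacy but that it sends the closed $\widetilde F$-orbit over $c$ to the closed $F$-orbit over $\gamma$; this follows from $\phi$ being isotopic to the identity together with the uniqueness of closed orbits in each free homotopy class (Ghys).
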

\begin{proof}
Define $\tau$ as in equation \eqref{eq:primitive} to be a primitive of $\omega$. Contracting it with $F$ yields
\begin{equation}\label{eq:tau-with-F}
\tau(F)=-1-c\pi^*\lambda +\ell_1\varrho.
\end{equation}
Recall that the restriction map $\ell_1$ is defined in equation \eqref{eq:defn-ell_1}. Moreover, we know that the Anosov magnetic flows are transitive and $\phi$ is isotopic to the identity, so $\phi^*\mu=\mu$. Since $\phi_*\widetilde{F}=F$, contracting $\phi^*\mu=\mu$ with $\widetilde{F}$ yields $\phi^*\omega=\tilde{\omega}$. This can be rewritten as $\phi^*(d\tau)=d\tilde{\tau}$, which in turn implies $$d(\phi^*\tau-\tilde{\tau})=0.$$ 
Since $\pi^*:H^1(M;\R)\to H^1(SM;\R)$ is an isomorphism, there exist a closed $1$-form $\varphi$ on $M$ and a function $f\in \mathcal{C}^\infty(SM, \R)$ such that
$$\phi^*\tau-\tilde{\tau} = \pi^*\varphi +df.$$
Contracting with $\widetilde{F}$ yields
$$\tau(F)\circ \phi = \tilde{\tau}(\widetilde{F})+\ell_1\varphi +\widetilde{F} f.$$
By equation \eqref{eq:tau-with-F}, we thus get
$$-1+(-c\pi^*\lambda +\ell_1\varrho)\circ \phi=-1-\tilde{c}\pi^*\tilde{\lambda}+\ell_1\tilde{\varrho}+\ell_1\varphi+\widetilde{F} f,$$
which simplifies to
\begin{equation}\label{eq:almost-there}
(-c\pi^*\lambda +\ell_1\varrho)\circ \phi=-\tilde{c}\pi^*\tilde{\lambda}+\ell_1\tilde{\varrho}+\ell_1\varphi+\widetilde{F} f.
\end{equation}
If we integrate with respect to $\mu$, we obtain (since $\phi^*\mu = \mu$)
$$c\int_{SM}\pi^*(\lambda) \mu=\tilde{c}\int_{SM}\pi^*(\tilde{\lambda})\mu.$$
We thus have $c^2=\tilde{c}^2$ by equation \eqref{eq:constant-c}, which shows that the cohomology classes of $\lambda\mu_a$ and $\tilde{\lambda}\mu_a$ in $H^2(M;\R)$ are the same up to a sign. 

If $\lambda=0$, we may take $\varrho=0$, and we know that $c=0$ thanks to equation \eqref{eq:constant-c}. It follows that $\tilde{c}=0$. Let $\tilde{\gamma}$ be a closed magnetic geodesic for $(M, g, \tilde{\lambda})$ of period $T$ and let $\widetilde{\Gamma}\coloneqq(\tilde{\gamma}, \dot{\tilde{\gamma}})\subset SM$. Equation \eqref{eq:almost-there} allows us to write 
\begin{equation*}
\int_0^T\ell_1(\tilde{\varrho}+\varphi)(\widetilde{\Gamma}(t))\, dt=0.
\end{equation*}
By the smooth Livšic theorem \cite[Theorem 2.1]{de-la-llave86} and \cite[Theorem B]{dairbekov05}, this means that $\tilde{\varrho}+\varphi$ is exact. Since $\varphi$ is closed, we get $d\tilde{\varrho}=0$, which in turn implies that $\tilde{\lambda}=0$, as desired. 
\end{proof}

We can now conclude the proof of Theorem \ref{theorem:second-theorem}.

\begin{proof}[Proof of Theorem \ref{theorem:second-theorem}]
If two Anosov magnetic systems $(M, g, \lambda)$ and $(M, \tilde{g}, \tilde{\lambda})$ are related by a smooth conjugacy  $\phi :\widetilde{S}M\to SM$ isotopic to the identity, Proposition \ref{proposition:same-teichmuller-class} yields a diffeomorphism $\psi:M\to M$ isotopic to the identity such that $\psi^*\tilde{g}=e^{2f}g$ for some $f\in \mathcal{C}^\infty(M, \R)$. 

If $\phi$ is a conjugacy and $f=0$, the map $\phi \circ d\psi : SM\to SM$ gives us a smooth conjugacy isotopic to the identity between the Anosov magnetic flows $(M, g, \psi^* \tilde{\lambda})$ and $(M, g, \lambda)$. Thus, Proposition \ref{proposition:magnetic-katok} tells us that $[\psi^*(\tilde{\lambda})\mu_a]=\pm [\lambda\mu_a]$ in $H^2(M;\R)$ and that $\lambda=0$ if and only if $\tilde{\lambda}=0$. 
\end{proof}

\subsection{Rigidity of the thermostat 1-form}  Given the conclusion of Proposition \ref{proposition:same-teichmuller-class}, it behooves us to understand the behaviour of thermostat flows under a conformal rescaling of the metric. 

\begin{lemma}\label{lemma:rescaling}
Let $(M, g, \lambda)$ be a thermostat and define a conformal rescaling $\tilde{g}\coloneqq e^{-2f}g$ of the metric for some $f\in \mathcal{C}^\infty(M,\R)$. Then, the scaling map $s: SM\to \widetilde{S}M$ defined in equation \eqref{eq:scaling-map}, which in this case is simply $(x,v)\mapsto (x, e^fv)$, satisfies
$$s_*X=e^{-f}(\widetilde{X}-\tilde{\ell}_1(\star df)\widetilde{V}), \qquad s_*V= \widetilde{V}.$$
In particular, the map $s$ represents a smooth orbit equivalence isotopic to the identity from the thermostat $(M, g, \lambda)$ to the thermostat
\begin{equation}\label{eq:new-thermostat}
(M, \tilde{g}, e^f(\lambda\circ s^{-1})-\tilde{\ell}_1(\star df)),
\end{equation}
with a time-change  $s_*F = e^{-f}\widetilde{F}$. 
\end{lemma}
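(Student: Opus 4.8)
The plan is to work entirely in the global frame $(X,H,V)$ on $SM$ and compute how the scaling map $s\colon(x,v)\mapsto(x,e^fv)$ transforms the vector fields, then read off the orbit equivalence from the formula $s_*F=s_*(X+\lambda V)$. First I would recall the standard conformal change formulas for $(\widetilde X,\widetilde H,\widetilde V)$ on $\widetilde SM$ in terms of $(X,H,V)$ on $SM$ (these are classical; see e.g. the computations in \cite{paternain23b} or \cite{dairbekov07}): after identifying $SM$ with $\widetilde SM$ via $s$, one has $s_*V=\widetilde V$ simply because $s$ is fibrewise multiplication by the positive scalar $e^{f(x)}$ and hence intertwines the two circle actions, and $s_*X=e^{-f}\bigl(\widetilde X-\widetilde\ell_1(\star df)\widetilde V\bigr)$, which is the statement to be verified. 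The cleanest way to establish the $X$ formula is to take a $g$-geodesic-like curve, or more directly to use the characterization of $X$ via the contact form and the Sasaki metric: $X$ is determined by $\alpha(X)=1$, $\iota_Xd\alpha=0$; pull back $\tilde\alpha,\,d\tilde\alpha$ under $s$ and compare. Alternatively, and perhaps most transparently, one differentiates: a curve $t\mapsto(\gamma(t),\dot\gamma(t))\in SM$ with $\nabla_{\dot\gamma}\dot\gamma=0$ maps under $s$ to $(\gamma(t),e^{f(\gamma(t))}\dot\gamma(t))$, and one checks this is (a time-reparametrization of) a curve tangent to $\widetilde X-\widetilde\ell_1(\star df)\widetilde V$ using the conformal change of the Levi-Civita connection $\widetilde\nabla_YZ=\nabla_YZ-df(Y)Z-df(Z)Y+g(Y,Z)\,\mathrm{grad}_g f$.

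Granting the two displayed identities $s_*X=e^{-f}(\widetilde X-\tilde\ell_1(\star df)\widetilde V)$ and $s_*V=\widetilde V$, the rest is bookkeeping. I would compute
\begin{equation*}
s_*F=s_*(X+\lambda V)=e^{-f}\bigl(\widetilde X-\tilde\ell_1(\star df)\widetilde V\bigr)+(\lambda\circ s^{-1})\widetilde V
=e^{-f}\widetilde X+\bigl((\lambda\circ s^{-1})-e^{-f}\tilde\ell_1(\star df)\bigr)\widetilde V,
\end{equation*}
where $\lambda\circ s^{-1}$ appears because pushing forward a vector field multiplies the coefficient function by $s^{-1}=(x,v)\mapsto(x,e^{-f}v)$. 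Factoring out $e^{-f}$ gives
\begin{equation*}
s_*F=e^{-f}\Bigl(\widetilde X+\bigl(e^{f}(\lambda\circ s^{-1})-\tilde\ell_1(\star df)\bigr)\widetilde V\Bigr)=e^{-f}\widetilde F',
\end{equation*}
where $\widetilde F'$ is precisely the generating vector field of the thermostat $(M,\tilde g,\tilde\lambda')$ with $\tilde\lambda'=e^{f}(\lambda\circ s^{-1})-\tilde\ell_1(\star df)$, as claimed in \eqref{eq:new-thermostat}. Since $e^{-f}\in\mathcal{C}^\infty(\widetilde SM,\R_{>0})$, this exhibits $s$ as a smooth orbit equivalence with time-change by $e^{-f}$; note the sign/placement convention matches $\phi_*\widetilde F=cF$ with the roles of source and target as in the statement.

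Finally I would address the isotopy claim: $s$ is isotopic to the identity because the conformal factor $e^{f}$ can be continuously deformed to $1$ through the path $e^{tf}$, $t\in[0,1]$, each of which gives a fibrewise-scaling diffeomorphism, so $s\circ s^{-1}=\mathrm{id}$ at $t=0$ trivially and more to the point, in the sense required by the paper, $s\circ s$ (or the relevant composition with the identification map) is visibly isotopic to the identity via this linear homotopy in the fibres. The only genuinely non-routine step is the derivation of $s_*X=e^{-f}(\widetilde X-\tilde\ell_1(\star df)\widetilde V)$; everything downstream is algebra. I expect the main obstacle to be getting the conformal-change formula for $X$ with the correct sign and correctly translating $df$ (a $1$-form on $M$) into the function $\tilde\ell_1(\star df)$ on $\widetilde SM$ — in particular remembering that it is $\star df$, not $df$, that enters, because the vertical direction $\widetilde V$ is ``rotation by $\pi/2$'' relative to the base direction, so the natural pairing $\tilde\ell_1(df)$ on horizontal vectors becomes $\tilde\ell_1(\star df)$ once one moves to the $\widetilde V$-coefficient; a careful check against the thermostat equation $\nabla_{\dot\gamma}\dot\gamma=\lambda J\dot\gamma$ (whose right-hand side already features $J$) pins this down.
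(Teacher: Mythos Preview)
Your proposal is correct and follows essentially the same approach as the paper: the paper cites \cite[Lemma B.1]{cekic25b} for the identities $s_*X=e^{-f}(\widetilde X-\tilde\ell_1(\star df)\widetilde V)$ and $s_*V=\widetilde V$, then performs exactly the same algebraic computation of $s_*F$ that you carry out. You go further by sketching how to derive the conformal-change formula for $X$ and by explicitly addressing the isotopy claim, neither of which the paper spells out.
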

\begin{proof} The first statement is proved in \cite[Lemma B.1]{cekic25b}. The conclusion then follows from the calculation 
\begin{align*}
s_*F &=s_*X+(\lambda\circ s^{-1}) s_*V\\
&=e^{-f}(\widetilde{X}-\tilde{\ell}_1(\star df)\widetilde{V})+(\lambda\circ s^{-1})\widetilde{V}.\qedhere\\
%&=e^{-f}(\widetilde{X}+( e^f(\lambda\circ s^{-1})-\tilde{\ell}_1^*(\star df))\widetilde{V})
\end{align*}
\end{proof}

In what follows, let $\theta$ be the $1$-form on $M$ satisfying $\ell_1\theta = \lambda_{-1}+\lambda_1$. If $\lambda$ is of Fourier degree $\leq 1$, we may more succinctly write the thermostat \eqref{eq:new-thermostat} as
$$(M, e^{-2f}g, e^f \lambda_0 + \tilde{\ell}_1(\theta-\star df)).$$

\begin{proposition}\label{proposition:gaussian-katok}
Let $(M, g, \lambda)$ and $(M, \tilde{g}, \tilde{\lambda})$, with $\lambda$ and $\tilde{\lambda}$ of Fourier degree $\leq 1$, be two Anosov thermostats. Suppose there is a smooth orbit equivalence $\phi : \widetilde{S}M\to SM$ isotopic to the identity between them. If $\star \theta$ or $\tilde{\star}\tilde{\theta}$ is closed, then $\star \theta - \tilde{\star} \tilde{\theta}$ is exact.
\end{proposition}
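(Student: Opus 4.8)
The plan is to reduce the statement to the solenoidal injectivity of the thermostat X-ray transform on $1$-forms, by recognising the line integral of $\star\theta$ over a closed thermostat geodesic as a quantity that depends only on the orbit foliation. First I would record the pointwise identity $\ell_1(\star\theta)=-V(\ell_1\theta)=-V\lambda$, which follows from $\ell_1\star=-V\ell_1$ and $V\lambda_0=0$; combined with Lemma~\ref{lemma:divergence} it reads $\ell_1(\star\theta)=-\div_\mu F$. Hence, for a closed thermostat geodesic $\gamma$ of $(M,g,\lambda)$ with lift $\Gamma=(\gamma,\dot\gamma)\subset SM$ and period $T$, one has $\int_\gamma\star\theta=\int_0^T(\star\theta)_{\gamma(t)}(\dot\gamma(t))\,dt=-\int_0^T(\div_\mu F)(\Gamma(t))\,dt$. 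The standard identity $\varphi_t^*\mu=\exp\!\big(\int_0^t(\div_\mu F)\circ\varphi_s\,ds\big)\mu$ then shows that $\int_0^T(\div_\mu F)(\Gamma(t))\,dt=\log\det\big(d\varphi_T|_{T_p(SM)/\R F}\big)$, i.e.\ the logarithm of the product of the two Floquet multipliers of the periodic orbit $\Gamma$ (a number which, incidentally, does not depend on the volume form, since the $F$-derivative terms integrate to zero over a closed orbit). The identical computation applies on the side of $(M,\tilde g,\tilde\lambda)$.

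Next I would show that if $\gamma$ and $\tilde\gamma$ are the (unique, by \cite[Theorem A]{ghys84}) closed thermostat geodesics of the two systems in the same free homotopy class, then the products of their Floquet multipliers coincide. Since $\phi$ is isotopic to the identity, $\phi(\widetilde\Gamma)=\Gamma$ as oriented orbits and $[\gamma]=[\tilde\gamma]$ in $H_1(M;\Z)$. Choosing a small transversal $\widetilde\Sigma$ at a point of $\widetilde\Gamma$, the diffeomorphism $\phi$ sends it to a transversal of $F$ and intertwines the two first-return maps, because the first-return point is determined by the orbit through the point and $\phi$ maps $\widetilde F$-orbits to $F$-orbits preserving their forward direction (the time change being positive). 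Differentiating at the fixed points, the linearised Poincaré maps are conjugate, hence share their eigenvalues; these eigenvalues are precisely the Floquet multipliers, so the products agree, and therefore $\int_\gamma\star\theta=\int_{\tilde\gamma}\tilde\star\tilde\theta$ for every free homotopy class.

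Assume now that $\star\theta$ is closed; the case in which $\tilde\star\tilde\theta$ is closed is handled identically with $\phi$ replaced by $\phi^{-1}$. Then $\int_\gamma\star\theta=\int_{[\gamma]}\star\theta$ depends only on the homology class, and by the previous step $\int_{\tilde\gamma}\tilde\star\tilde\theta=\int_\gamma\star\theta=\int_{[\tilde\gamma]}\star\theta$ (using $[\gamma]=[\tilde\gamma]$); hence, writing $\zeta\coloneqq\tilde\star\tilde\theta-\star\theta$, we obtain $\int_{\tilde\gamma}\zeta=\int_{[\tilde\gamma]}\star\theta-\int_{[\tilde\gamma]}\star\theta=0$ for every closed $(M,\tilde g,\tilde\lambda)$-thermostat geodesic $\tilde\gamma$, i.e.\ the integral of $\tilde\ell_1\zeta$ over every closed orbit of $\widetilde F$ vanishes. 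By the smooth Livšic theorem \cite{de-la-llave86} for the transitive Anosov flow $\widetilde F$, there is $w\in\mathcal{C}^\infty(\widetilde SM)$ with $\widetilde F w=\tilde\ell_1\zeta$; since $\tilde\ell_1\zeta$ has Fourier degree $\leq 1$ and $(M,\tilde g,\tilde\lambda)$ is an Anosov thermostat with $\tilde\lambda$ of Fourier degree $\leq 1$, the non-attenuated tensor tomography result of \cite{dairbekov07} forces $w$ to have Fourier degree $\leq 0$, i.e.\ $w=\pi^*h$ for some $h\in\mathcal{C}^\infty(M,\R)$, where $\pi\colon\widetilde SM\to M$ is the projection. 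Then $\widetilde F\pi^*h=\widetilde X\pi^*h=\tilde\ell_1(dh)$, so $\tilde\ell_1\zeta=\tilde\ell_1(dh)$ and hence $\zeta=dh$; that is, $\star\theta-\tilde\star\tilde\theta=-dh$ is exact.

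I expect the main obstacle to be the content of the second paragraph, together with the Floquet identification in the first: one must recognise $\int_\gamma\star\theta$ as the logarithm of the product of Floquet multipliers of the lifted periodic orbit, and then verify carefully that a smooth orbit equivalence isotopic to the identity preserves this number through a conjugacy of Poincaré return maps — this is precisely where both the positivity of the time change and the isotopy-to-the-identity hypothesis are used. Once that is in place, the passage from ``vanishing X-ray transform of $\zeta$'' to ``$\zeta$ exact'' is the by-now-standard combination of the smooth Livšic theorem with thermostat tensor tomography.
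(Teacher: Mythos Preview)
Your proof is correct and reaches the same endpoint as the paper's, but the route to the key intermediate identity $\int_\gamma\star\theta=\int_{\tilde\gamma}\tilde\star\tilde\theta$ is packaged differently. The paper invokes Lemma~\ref{lemma:relation-divergences}, which gives the pointwise relation $\phi^*(c)\phi^*(\div_\mu F)=\div_{\tilde\mu}\widetilde F+\widetilde F\!\big(\ln(\det\phi/\phi^*c)\big)$; integrating this over a closed $\widetilde F$-orbit kills the coboundary term and yields the equality of integrated divergences directly. You instead recognise $-\int_0^T(\div_\mu F)\circ\Gamma$ as the logarithm of the product of the transverse Floquet multipliers, a quantity that is automatically preserved because $\phi$ conjugates the Poincar\'e return maps. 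These are two faces of the same coin: Lemma~\ref{lemma:relation-divergences} is precisely the infinitesimal version of the Jacobian identity that underlies the Floquet computation, so the paper's coboundary term $\widetilde F(\ln(\det\phi/\phi^*c))$ is exactly what records the change of transversal in your picture. Your formulation has the advantage of making the invariance conceptually transparent and volume-form independent; the paper's has the advantage of being a one-line consequence of a lemma already proved for other purposes. From that point on the two arguments coincide: closedness of $\star\theta$ turns its period into a homology pairing, the isotopy-to-identity hypothesis matches the homology classes, and the vanishing of the thermostat X-ray transform of $\star\theta-\tilde\star\tilde\theta$ is converted to exactness via the smooth Liv\v{s}ic theorem together with the non-attenuated tensor tomography of \cite{dairbekov07} (which the paper cites as \cite[Theorem B]{dairbekov07}).

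One small point of care: your assertion $\phi(\widetilde\Gamma)=\Gamma$ as oriented orbits uses the uniqueness of closed orbits in a free homotopy class on $SM$, which follows from the Anosov property together with the fact that $s\circ\phi$ acts trivially on $\pi_1$; this is fine, but you could sidestep it entirely by simply \emph{defining} $\Gamma\coloneqq\phi(\widetilde\Gamma)$ and observing (via the isotopy) that $\pi\circ\Gamma$ and $\tilde\gamma$ are homologous in $M$, which is all the closedness step requires.
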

\begin{proof}
By Lemma \ref{lemma:divergence}, we have $\div_\mu F =V\lambda=- \ell_1(\star \theta)$, so an application of Lemma \ref{lemma:relation-divergences} gives us
$$\phi^*(c)\phi^*( \ell_1(\star\theta))=\tilde{\ell}_1(\tilde{\star}\tilde{\theta})-\widetilde{F}\left(\ln \left(\dfrac{\det \phi}{\phi^*c}\right)\right),$$
where $c\in \mathcal{C}^\infty(SM, \R_{>0})$ is such that $\phi_* \widetilde{F}=cF$. Therefore, if $\tilde{\gamma}$ is a closed thermostat geodesic for $(M, \tilde{g}, \tilde{\theta})$ o period $T$ and $\widetilde{\Gamma} \coloneqq(\tilde{\gamma}, \dot{\tilde{\gamma}})\subset \widetilde{S}M$, we have
$$\int_0^T \phi^*(c\ell_1(\star\theta))(\widetilde{\Gamma}(t))\, dt = \int_{0}^T \tilde{\ell}_1(\tilde{\star}\tilde{\theta})(\widetilde{\Gamma}(t))\, dt.$$
Without loss of generality, suppose that $d(\star \theta)=0$. Then, since $\phi$ is isotopic to the identity and integrals of $1$-forms over curves are independent of the parametrization, we have
$$ \int_0^T \phi^*(c\ell_1(\star\theta))(\widetilde{\Gamma}(t))\, dt = \int_0^T \tilde{\ell}_1(\star\theta)(\widetilde{\Gamma}(t))\, dt .$$
Putting these together, we conclude that
$$\int_{0}^T\tilde{\ell}_1(\star \theta - \tilde{\star}\tilde{\theta})(\widetilde{\Gamma}(t))\, dt=0.$$
An application of the smooth Livšic theorem \cite[Theorem 2.1]{de-la-llave86} and \cite[Theorem B]{dairbekov07} allows us to conclude that $\star \theta - \tilde{\star}\tilde{\theta}$ is exact, as desired. 
\end{proof}

We can now conclude the proof of Theorem \ref{theorem:main-theorem}.

\begin{proof}[Proof of Theorem \ref{theorem:main-theorem}]
If two Gaussian thermostats $(M, g, \theta)$ and $(M, \tilde{g}, \tilde{\theta})$ with negative thermostat curvature are related by a smooth orbit equivalence isotopic to the identity, then Proposition \ref{proposition:same-teichmuller-class} tells us that there exists a diffeomorphism $\psi : M\to M$ isotopic to the identity such that $\psi^*\tilde{g}=e^{2f}g$ for some $f\in \mathcal{C}^\infty(M, \R)$. 

It remains to show that, if either $\star\theta$ or $\tilde{\star}\tilde{\theta}$ is closed, then $\star(\psi^*\tilde{\theta}- \theta)$ is exact.  The map $\phi \circ d\psi$ yields a smooth orbit equivalence isotopic to the identity between the Anosov Gaussian thermostats $(M, e^{2f}g, \psi^*\tilde{\theta})$ and $(M, g, \theta)$. By Lemma \ref{lemma:rescaling}, we may assume without loss of generality that $f=0$. Proposition \ref{proposition:gaussian-katok} then allows us to conclude.
\end{proof}

\appendix \section{Solutions to the transport equation as extensions}\label{appendix:A}

A key ingredient that we use in this paper is Theorem \ref{theorem:extension-holomorphic}: it allows us to extend any holomorphic $1$-form $\tau$ on $M$ (seen as a function on $SM$) to a fibrewise holomorphic distribution $u\in \mathcal{D}'(SM)$ satisfying the transport equation $(F+V\lambda)u=0$. We say that the distribution $u$ is an extension of $\tau$ in the sense that $u_1 = \pi_1^*\tau$. 

This can be seen as part of a larger theme, which is to find distributional solutions of the transport equation $(F+V\lambda)u=0$ with some prescribed Fourier modes. The problem is closely related to the study of the surjectivity of the adjoint of the X-ray transform for thermostats, which in turn is key to understanding the injectivity of the X-ray transform operator itself. 

As an example, the following extension result generalizes \cite[Theorem 1.4]{paternain14}, \cite[Theorem 1.6]{ainsworth15} and \cite[Theorem 1.7]{assylbekov17}, which cover the cases of geodesic flows, magnetic systems and Gaussian thermostats, respectively.
\begin{theorem}\label{theorem:extension-functions}
Let $(M, g, \lambda)$ be an Anosov thermostat. For any $f\in \mathcal{C}^\infty(M)$, there exists $u\in H^{-1}(SM)$ such that $(F+V\lambda)u=0$ and $u_0 = \pi^*f$. 
\end{theorem}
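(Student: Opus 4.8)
The plan is to obtain the extension by duality, and then to upgrade the regularity of the resulting distribution using the meromorphic resolvent of the flow. As recorded in \S\ref{section:preliminaries}, the space $\mathcal{D}'_{\text{tr}}(SM)$ is exactly the annihilator of the range of $F$ acting on $\mathcal{C}^\infty(SM)$, since $u\in\mathcal{D}'_{\text{tr}}(SM)$ if and only if $\langle u, F\varphi\rangle=0$ for all $\varphi$. Hence the solvability of $(F+V\lambda)u=0$, $u_0=\pi^*f$, for a distributional $u$ is dual to the statement that the map $\mathcal{C}^\infty(M)\ni f\mapsto[\pi^*f]\in\mathcal{C}^\infty(SM)/\overline{F\,\mathcal{C}^\infty(SM)}$ is injective with closed range. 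By the smooth Livšic theorem \cite{de-la-llave86}, together with the transitivity of thermostat flows from \cite{ghys84}, a smooth function on $SM$ lies in $F\,\mathcal{C}^\infty(SM)$ precisely when it integrates to zero over every closed thermostat geodesic; integrating $\pi^*f$ over a closed orbit of period $T$ gives $\int_0^Tf(\gamma(t))\,dt$, so the injectivity in question is exactly the injectivity of the X-ray transform of $(M,g,\lambda)$ on functions, while the closed-range property reduces to the closedness of $F\,\mathcal{C}^\infty(SM)$ (a countable intersection of closed hyperplanes).

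This already produces some $u\in\mathcal{D}'(SM)$, but not the claimed $H^{-1}$ bound; for that I would instead run the construction through Pollicott--Ruelle theory. After a harmless time change, $P:=F+V\lambda=-F^*$ is (a multiple of) the generator of an Anosov flow, so the resolvent $R(z)=(P+z)^{-1}$, holomorphic for $\Re z\gg0$, continues meromorphically to a neighbourhood of $z=0$ as a family of bounded operators on anisotropic Sobolev spaces, which embed into $H^{-1}(SM)$ and whose elements have wavefront set contained in $E_s^*\cup E_u^*$ --- precisely the microlocal input already exploited in Lemma \ref{lemma:wavefrontset}. The pole at $0$ has finite rank, and since the flow is topologically mixing \cite{ghys84} the zero resonance is simple, with resonant state the density of the SRB measure (an element of $\mathcal{D}'_{\text{tr}}(SM)$) and co-resonant state the constant function; in particular the finite part $Y$ of $R$ at $0$ solves $Pq=h$ for any $h\in\mathcal{C}^\infty(SM)$ with $\int_{SM}h\,d\mu=0$. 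The right-hand sides built from $\pi^*f$ all have vanishing zeroth Fourier mode --- because $V$ annihilates $\pi^*f$ and $(V\lambda)_0=0$ --- and are therefore Liouville--mean--zero, so the only genuine constraint left is the non-degeneracy of the operator induced on the base $M$, which is once more the injectivity above. Inverting that operator, modulo a finite-rank correction involving the SRB resonant state, and pulling the solution back through $Y$ then yields $u\in\mathcal{D}'_{\text{tr}}(SM)\cap H^{-1}(SM)$ with $u_0=\pi^*f$ after normalising.

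The main obstacle is the injectivity of the X-ray transform on functions --- equivalently, the non-degeneracy of the operator to be inverted on $M$ --- for a \emph{completely general} Anosov thermostat, i.e.\ for arbitrary $\lambda\in\mathcal{C}^\infty(SM,\R)$, where neither the Carleman estimates of Theorem \ref{theorem:carleman} nor the tensor tomography results of \cite{dairbekov07} are available. I expect this to be the heart of Appendix \ref{appendix:A}, handled by a Pestov-type energy identity adapted to the operator $F+V\lambda$ together with the positivity supplied by the Anosov property (the ordering $r^s<r^u$ of the Riccati solutions in \eqref{eq:pre-riccati}), or, in the resonance language, by a quantitative lower bound on $R(z)$ near $z=0$. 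Pinning the regularity of the solution at exactly $H^{-1}(SM)$, rather than some larger negative Sobolev space, is a secondary but genuine point, again settled by the mapping properties of the resolvent.
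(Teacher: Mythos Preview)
Your proposal correctly identifies the core analytic input---a Pestov-type identity combined with positivity coming from the Anosov property---but the route you take to the $H^{-1}$ regularity has a genuine gap, and the paper's argument is both different and more direct.

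The paper does not go through Pollicott--Ruelle resonances at all. Instead it proves (Theorem~\ref{theorem:alpha-regularity}) that every Anosov thermostat is $\alpha$-controlled for some $\alpha>0$, i.e.\ $\|Fu\|^2-\langle\mathbb{K}u,u\rangle\geq\alpha(\|Fu\|^2+\|u\|^2)$. Plugging this into the Pestov identity \eqref{eq:pestov-identity} yields an a priori estimate of the form $\|u\|_{H^1}\leq C\|VFu\|_{L^2}$ on the subspace $\bigoplus_{|k|\geq 1}\Omega_k$. The $H^{-1}$ solution then drops out immediately by elementary functional-analytic duality (closed range plus Riesz/Hahn--Banach), exactly as in \cite{paternain14, ainsworth15, assylbekov17}; no separate regularity upgrade is needed. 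In other words, the same Pestov estimate that gives X-ray injectivity on functions already carries the $H^1$--$H^{-1}$ duality that pins the solution in $H^{-1}(SM)$.

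Your resolvent approach, by contrast, does not deliver $H^{-1}$. The anisotropic Sobolev spaces on which $(F+V\lambda+z)^{-1}$ continues meromorphically across $\Re z=0$ do \emph{not} embed into $H^{-1}(SM)$: pushing the essential spectrum past $0$ requires large anisotropy parameters, which forces arbitrarily negative regularity in the unstable direction, so the resolvent only lands in $H^{-N}$ for some large $N$ depending on the dynamics. For thermostats whose SRB measure is genuinely fractal (precisely the phenomenon the paper emphasises), there is no reason to expect the zero-resonant state, let alone a generic element of the range of the holomorphic part, to lie in $H^{-1}$. A smaller point: $F+V\lambda$ is not ``a multiple of the generator of an Anosov flow'' after a time change---it is $F$ plus a zeroth-order potential---so while the Faure--Sj\"ostrand/Dyatlov--Zworski machinery still applies, the phrasing is misleading.

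So: your duality reduction to X-ray injectivity is correct, and your expectation that a Pestov identity plus Anosov positivity resolves it is exactly what the paper does via the $\alpha$-controlled property. But you should drop the resolvent step entirely and instead extract the $H^{-1}$ bound directly from the $H^1$ estimate that the Pestov argument already gives you.
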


Their arguments crucially rely on the \textit{Pestov identity} (see \cite[Theorem 3.3]{dairbekov07}).

\begin{theorem}
Let $(M, g, \lambda)$ be a thermostat. Then, we have
\begin{equation}\label{eq:pestov-identity}
\Vert VF u\Vert^2=\Vert FVu\Vert^2-\langle \mathbb{K} Vu, Vu\rangle+\Vert Fu\Vert^2
\end{equation}
for all $u\in \mathcal{C}^\infty(SM)$.
\end{theorem}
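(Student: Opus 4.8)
The plan is to run the standard commutator (integration-by-parts) argument that underlies all Pestov-type identities, but for the operator $F=X+\lambda V$ whose $L^2$-adjoint $F^*=-(F+V\lambda)$ is no longer skew-symmetric. Introduce $\mathcal{H}\coloneqq[V,F]=H+(V\lambda)V$, so that $VFu=FVu+\mathcal{H}u$. From the structure equations $[V,X]=H$, $[X,H]=\pi^*K_gV$, $[H,V]=X$ one computes the two brackets
\begin{equation*}
[\mathcal{H},V]=X-(V^2\lambda)V,\qquad [F,\mathcal{H}]=-\lambda F-(V\lambda)\mathcal{H}+\mathbb{K}V ,
\end{equation*}
the point being that after expanding $[X+\lambda V,\,H+(V\lambda)V]$ and re-expressing $X=F-\lambda V$ and $H=\mathcal{H}-(V\lambda)V$, the coefficient of $V$ is exactly $\pi^*K_g-H\lambda+\lambda^2+FV\lambda=\mathbb{K}$. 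Alongside these I would use $V^*=-V$, $H^*=-H$, $F^*=-(F+V\lambda)$ (Lemma \ref{lemma:divergence}), and the induced relation $\mathcal{H}^*=-\mathcal{H}-V^2\lambda$.

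Squaring $VFu=FVu+\mathcal{H}u$ reduces the identity to proving $2\,\mathrm{Re}\langle FVu,\mathcal{H}u\rangle=\Vert Fu\Vert^2-\langle\mathbb{K}Vu,Vu\rangle-\Vert\mathcal{H}u\Vert^2$, and I would evaluate the left-hand side in three integration-by-parts steps. First, move $F$ off $FVu$ with $F^*$ and commute $F$ past $\mathcal{H}$ using $[F,\mathcal{H}]$; the two terms carrying the divergence $V\lambda$ cancel, leaving $-\langle Vu,\mathcal{H}Fu\rangle+\langle\lambda Vu,Fu\rangle-\langle\mathbb{K}Vu,Vu\rangle$. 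Second, move $\mathcal{H}$ off $\langle Vu,\mathcal{H}Fu\rangle$ with $\mathcal{H}^*$ and commute $\mathcal{H}$ past $V$ using $[\mathcal{H},V]$; the two terms carrying $V^2\lambda$ cancel, leaving $\langle V\mathcal{H}u,Fu\rangle+\langle Xu,Fu\rangle$. Third, move $V$ off $\langle V\mathcal{H}u,Fu\rangle$ with $V^*=-V$ and re-expand $VFu=FVu+\mathcal{H}u$, producing $-\langle\mathcal{H}u,FVu\rangle-\Vert\mathcal{H}u\Vert^2$. Assembling, $\langle FVu,\mathcal{H}u\rangle=-\langle\mathcal{H}u,FVu\rangle-\Vert\mathcal{H}u\Vert^2+\langle Xu,Fu\rangle+\langle\lambda Vu,Fu\rangle-\langle\mathbb{K}Vu,Vu\rangle$; adding the conjugate identity and solving for $\mathrm{Re}\langle FVu,\mathcal{H}u\rangle$, together with $\langle Xu,Fu\rangle+\langle\lambda Vu,Fu\rangle=\langle Fu,Fu\rangle=\Vert Fu\Vert^2$, yields the displayed formula and hence equation \eqref{eq:pestov-identity}.

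The main obstacle is bookkeeping rather than conceptual. One must get $[F,\mathcal{H}]$ exactly right — this is where $\mathbb{K}$ materializes — and one must carefully propagate the ``defect'' terms $V\lambda$ and $V^2\lambda$ that arise because $F$ and $\mathcal{H}$ are not skew-adjoint, checking at each step that they cancel against matching terms in $[F,\mathcal{H}]$ and $[\mathcal{H},V]$. Alternatively, one can first derive an abstract identity for an arbitrary vector field $Y$ with $\div_\mu Y=\delta$ and skew-adjoint $V$, phrased in terms of $W\coloneqq[V,Y]$, $[W,V]$, $[Y,W]$ and $\delta$, and then specialize to $Y=F$, $\delta=V\lambda$; the algebra is identical. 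Finally, since $\mathbb{K}$ and $\lambda$ are real-valued, the real parts above are needed only for complex-valued $u$, and setting $\lambda=0$ recovers the classical Pestov identity $\Vert VXu\Vert^2=\Vert XVu\Vert^2-\langle\pi^*K_gVu,Vu\rangle+\Vert Xu\Vert^2$ for the geodesic flow.
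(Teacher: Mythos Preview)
Your argument is correct: the commutators $[\mathcal{H},V]=X-(V^2\lambda)V$ and $[F,\mathcal{H}]=-\lambda F-(V\lambda)\mathcal{H}+\mathbb{K}V$ are right, the three integration-by-parts steps go through exactly as you describe, and the divergence terms $V\lambda$ and $V^2\lambda$ do cancel at the indicated places. Note, however, that the paper does not actually prove this identity; it simply quotes it from \cite[Theorem~3.3]{dairbekov07}, and what you have written is essentially the standard commutator proof given there.
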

Recall that the thermostat curvature $\mathbb{K}$ is defined in equation \eqref{eq:general-thermostat-curvature}. In all three papers, the proofs also introduce the following property.

\begin{definition}
Let $\alpha\in [0,1]$. A thermostat $(M, g, \lambda)$ is \textit{$\alpha$-controlled} if 
\begin{equation*}
\Vert F u\Vert^2-\langle\mathbb{K}u, u\rangle\geq \alpha\Vert F u \Vert^2
\end{equation*}
for all $u\in \mathcal{C}^\infty(SM)$.
\end{definition}

They then show that geodesic flows, magnetic systems and Gaussian thermostats are $\alpha$-controlled for some $\alpha>0$ whenever they are Anosov. However, using the Pestov identity \eqref{eq:pestov-identity} for thermostats, the same proof as in \cite[Theorem 3.1]{assylbekov17} goes through for the more general case. 

\begin{theorem}\label{theorem:alpha-regularity}
Let $(M, g, \lambda)$ be an Anosov thermostat. There exists $\alpha>0$ such that
\begin{equation*}
\Vert F u \Vert^2-\langle\mathbb{K} u, u\rangle\geq \alpha\left(\Vert Fu\Vert^2+\Vert u \Vert^2\right)
\end{equation*}
for all $u \in \mathcal{C}^\infty(SM)$. In particular, $(M, g,\lambda)$ is $\alpha$-controlled.  
\end{theorem}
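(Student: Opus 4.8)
The plan is to reproduce the argument of \cite[Theorem 3.1]{assylbekov17}, packaging the hyperbolicity through the two Riccati solutions of the flow; the thermostat Pestov identity \eqref{eq:pestov-identity} is the integrated form of the same structure equations on $SM$ that produce these, so ``the same proof goes through'' once it is available. Concretely, let $r^{s},r^{u}\in\mathcal{C}^{1}(SM,\R)$ be the functions from \eqref{eq:pre-riccati} and set $\rho^{s}\coloneqq r^{s}-V\lambda$ and $\rho^{u}\coloneqq r^{u}-V\lambda$. Using the commutator relations $[V,F]=H+V(\lambda)V$, $[X,H]=\pi^{*}(K_{g})V$ and $[V,H]=-X$, together with the flow-invariance of the weak subbundles $\R F\oplus E_{s/u}$ --- which forces $[F,H+r^{s/u}V]$ to lie in the span of $F$ and $H+r^{s/u}V$ --- one computes that both $\rho^{s}$ and $\rho^{u}$ solve the Riccati equation
\begin{equation*}
F\rho+\rho^{2}+\rho\,V\lambda=-\mathbb{K},
\end{equation*}
the identification of the inhomogeneous term being exactly where the definition \eqref{eq:general-thermostat-curvature} of $\mathbb{K}$ is used. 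Since $SM$ is compact, $\rho^{s}$ and $\rho^{u}$ are bounded, and by the already established inequality $r^{s}<r^{u}$ there is $\delta>0$ with $\rho^{u}-\rho^{s}=r^{u}-r^{s}\geq\delta$ everywhere.

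Next I would run a square-completion. For real $\rho\in\{\rho^{s},\rho^{u}\}$ and arbitrary $u\in\mathcal{C}^{\infty}(SM)$, expand $\Vert Fu-\rho u\Vert^{2}$, use the pointwise identity $\mathrm{Re}\,((Fu)\bar{u})=\tfrac12 F(|u|^{2})$ and the adjoint formula $F^{*}=-(F+V\lambda)$ from Lemma \ref{lemma:divergence} to move the flow derivative onto $\rho$, and then substitute the Riccati equation. Writing $P(u)\coloneqq\Vert Fu\Vert^{2}-\langle\mathbb{K}u,u\rangle$, this produces
\begin{equation*}
\Vert Fu-\rho^{s}u\Vert^{2}=\Vert Fu-\rho^{u}u\Vert^{2}=P(u)\geq 0 .
\end{equation*}
(The case of complex $u$ reduces to the real one, since $P$, $\Vert F\cdot\Vert^{2}$ and $\Vert\cdot\Vert^{2}$ all split over real and imaginary parts.)

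Finally I would extract two lower bounds from these identities and average. Subtracting the completed squares, $\Vert(\rho^{u}-\rho^{s})u\Vert\leq\Vert Fu-\rho^{s}u\Vert+\Vert Fu-\rho^{u}u\Vert=2\sqrt{P(u)}$, so the uniform gap $\rho^{u}-\rho^{s}\geq\delta$ gives $P(u)\geq\tfrac{\delta^{2}}{4}\Vert u\Vert^{2}$ (so the estimate is already non-trivial on constants). Separately, if $C$ bounds $|\rho^{s}|$, then $\Vert Fu\Vert\leq\Vert Fu-\rho^{s}u\Vert+\Vert\rho^{s}u\Vert\leq\sqrt{P(u)}+C\Vert u\Vert\leq(1+2C/\delta)\sqrt{P(u)}$, hence $P(u)\geq c\Vert Fu\Vert^{2}$ for some $c>0$. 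Averaging, $\Vert Fu\Vert^{2}-\langle\mathbb{K}u,u\rangle\geq\alpha(\Vert Fu\Vert^{2}+\Vert u\Vert^{2})$ with $\alpha\coloneqq\tfrac12\min(c,\delta^{2}/4)>0$; dropping the $\Vert u\Vert^{2}$ term then shows that $(M,g,\lambda)$ is $\alpha$-controlled.

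The one genuinely delicate step is the first: pinning down the correct Riccati equation for an Anosov thermostat, in particular that the right unknown is $r^{s/u}-V\lambda$ rather than $r^{s/u}$ and that the inhomogeneous term is exactly $-\mathbb{K}$ as in \eqref{eq:general-thermostat-curvature}. This is a bookkeeping computation with the structure equations on $SM$, but it is precisely where the divergence $V\lambda$ enters and is what makes the thermostat Pestov identity the natural tool, mirroring \cite[Theorem 3.1]{assylbekov17}. Everything afterwards is elementary Hilbert-space manipulation on $L^{2}(SM)$.
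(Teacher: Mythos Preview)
Your proposal is correct and follows precisely the route the paper invokes: it defers to \cite[Theorem 3.1]{assylbekov17}, and you have faithfully reproduced that Riccati-equation argument, including the correct identification of the unknown $\rho^{s/u}=r^{s/u}-V\lambda$ and the inhomogeneous term $-\mathbb{K}$ for a general thermostat. One small remark: although the paper's text mentions the Pestov identity \eqref{eq:pestov-identity} alongside the reference, your write-up makes clear (correctly) that the Pestov identity is not actually used in this step --- the proof rests entirely on the Riccati equation, the gap $r^{u}-r^{s}\geq\delta>0$, and the integration-by-parts formula $F^{*}=-(F+V\lambda)$.
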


The rest of the argument in \cite{assylbekov17} can then also be recycled to prove Theorem \ref{theorem:extension-functions}. 

The next theorem is again in the spirit of extending functions with low Fourier degree: it deals with functions induced from 1-forms on $M$. The result requires a technical condition, which is that the $1$-form $\theta$ being considered needs to be \textit{solenoidal} in the sense that $\delta \theta = 0$. Here, $\delta$ is the  co-differential with respect to the metric $g$ acting on $1$-forms, that is, $\delta = -\star d\star $. If we write $\ell_1\theta=a_{-1}+a_1\in \Omega_{-1}\oplus \Omega_1$, then $\delta \theta =0$ if and only if $\eta_+ a_{-1}+\eta_- a_1 = 0$ (see \cite{paternain14}).  

\begin{theorem}\label{theorem:extension-forms}
Let $(M, g, \lambda)$ be an Anosov thermostat. If $a\in \Omega_{-1}\oplus \Omega_1$ satisfies $\eta_+ a_{-1}+\eta_- a_1 = 0$, then there exists $u\in H^{-1}(SM)$ with $u_0=0$ such that $(F+V\lambda)u=0$ and $u_{-1}+u_1 = a$. 
\end{theorem}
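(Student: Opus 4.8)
The plan is to follow the scheme of \cite{ainsworth15, assylbekov17} (and ultimately \cite{paternain14}): produce $u$ as a correction $u = a + w$ of the naive ansatz $a := a_{-1} + a_1$ by a distribution $w$ supported on Fourier modes of absolute degree $\geq 2$, where the existence of $w$ is a solvability statement for the transport operator $\mathbf{P} := F + V\lambda$ that is settled by a duality (closed-range) argument powered by the energy estimate of Theorem~\ref{theorem:alpha-regularity}.

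First I would reduce to solvability. Set $h := -\mathbf{P}a \in \mathcal{C}^\infty(SM)$. A short Fourier-mode computation (using $(Xu)_k = \eta_-u_{k+1} + \eta_+u_{k-1}$ and observing that the $\lambda$-contributions to the zeroth mode cancel) shows that the degree-$0$ component of $\mathbf{P}a$ is exactly $\eta_+ a_{-1} + \eta_- a_1$; hence the hypothesis is precisely the statement that $h$ has vanishing zeroth Fourier mode, which is the natural condition for solving the transport equation with the prescribed low-mode data. It then suffices to find $w \in H^{-1}(SM)$ with $\mathbf{P}w = h$ and $w_{-1} = w_0 = w_1 = 0$, for then $u := a + w$ satisfies $\mathbf{P}u = 0$, $u_0 = 0$ and $u_{\pm 1} = a_{\pm 1}$. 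Since $\mathbf{P}^* = -F$ (from $F^* = -(F + V\lambda)$ and the self-adjointness of multiplication by the real function $V\lambda$), solving this reduces, via Hahn--Banach, to (i) closed range for the appropriate restriction of $F$ and (ii) the vanishing of the pairing of $h$ against the corresponding cokernel. For (i), the Pestov identity \eqref{eq:pestov-identity} combined with the uniform lower bound of Theorem~\ref{theorem:alpha-regularity}, applied on the orthogonal complement of the finitely many low Fourier modes, yields the coercive estimate needed for closed range and exhibits the cokernel as a finite-dimensional space of invariant distributions with controlled Fourier content; for (ii), one pairs $h$ against each such element, and the Guillemin--Kazhdan dictionary of Lemma~\ref{lemma:raising-lowering-equivalence} reduces the pairing either to the solenoidal identity or to a pairing that vanishes for Fourier-degree reasons. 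The regularity gained in the estimate is what places $w$, and hence $u$, in $H^{-1}(SM)$.

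The hard part is exactly what prevents earlier work from applying verbatim: for a general thermostat $\mathbf{P}$ need not preserve a smooth invariant volume, so its formal adjoint is $-F$ rather than $-\mathbf{P}$, and one must be scrupulous about which Fourier modes the Pestov identity genuinely controls; treating the degrees $-1, 0, 1$ separately, together with a final adjustment of the low Fourier modes of $w$ so that $u$ has precisely the required low-mode data, is what legitimizes the coercive estimate. Once the $\alpha$-controlled bound of Theorem~\ref{theorem:alpha-regularity} is available for general thermostats, the remaining steps are routine adaptations of the corresponding arguments in \cite{ainsworth15, assylbekov17}.
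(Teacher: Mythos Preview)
Your outline follows the same scheme as the paper's (and as \cite{paternain14, ainsworth15, assylbekov17}): reduce to a solvability problem for the transport operator on the complement of the low Fourier modes, and settle it by a duality argument powered by a Pestov-type coercive estimate. The solenoidal condition $\eta_+ a_{-1} + \eta_- a_1 = 0$ is correctly identified with $(\mathbf{P}a)_0 = 0$.

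However, you have misdiagnosed the difficulty. The asymmetry $\mathbf{P}^* = -F$ is \emph{not} new: it is already present for Gaussian thermostats and was dealt with in \cite{assylbekov17}, so it cannot be ``what prevents earlier work from applying verbatim''. What is genuinely new for an arbitrary Anosov thermostat is that $\lambda$ may have infinite Fourier degree. The paper works with the operator $Q := TVF$ (with $T$ the projection onto modes $|k|\geq 2$) and needs the estimate $\|u\|_{H^1} \leq C\|Qu\|_{L^2}$ for $u$ with $u_0 = 0$. In the earlier papers, bounding $\|(Fu)_{\pm 1}\|$ in terms of $\|Qu\|$ involves only finitely many Fourier modes of $\lambda$; here one must control infinite sums of the shape $\sum_{k} k^2 \|\lambda_{-k\pm 1}\|_{L^\infty}\|u_k\|$, and the paper proves a rapid-decay lemma for the $L^\infty$ norms of the Fourier modes of a smooth function (Lemma~\ref{lemma:rapid-decay}) precisely to make this step go through. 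Your claim that ``the remaining steps are routine adaptations'' skips exactly this point.

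A smaller issue concerns the organization of the duality. You phrase it directly in terms of $F = -\mathbf{P}^*$, with a separate cokernel verification and a ``final adjustment'' of the low modes of $w$. The paper instead runs the duality on $Q = TVF$: once $\|u\|_{H^1} \leq C\|Qu\|_{L^2}$ holds on $\bigoplus_{|k|\geq 1}\Omega_k$, the adjoint $Q^* = \mathbf{P}VT$ is surjective from $L^2$ onto $\{f \in H^{-1}: f_0 = 0\}$, so the ansatz $w = VTh$ with $h \in L^2$ simultaneously enforces $w_{-1} = w_0 = w_1 = 0$ and places $w$ in $H^{-1}$. In your framing both the low-mode constraint and the $H^{-1}$ regularity have to be arranged by hand, and the cokernel step you sketch is not needed.
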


This is a generalization of \cite[Theorem 1.5]{paternain14}, \cite[Theorem 1.7]{ainsworth15} and \cite[Theorem 1.8]{assylbekov17}, which again deal with geodesic flows, magnetic systems and Gaussian thermostats, respectively. While the condition $u_0=0$ is not explicitly stated there, it follows directly from their proofs, as they construct solutions of the form $u=VTh+a$ for some $h\in L^2(SM)$, where the projection operator $T$, defined below in equation \eqref{eq:defn-T}, kills the Fourier modes of degree $\leq 1$. 

However, adapting the proofs requires some care. We will need the following lemma.

\begin{lemma}\label{lemma:rapid-decay}
For any $\lambda\in \mathcal{C}^\infty(SM)$, 
the $L^\infty$ norms of its Fourier modes $\{\lambda_k\}_{k\in \Z}$ are rapidly decaying in the sense that, for all $\alpha\in \N$, we have
\begin{equation*}
\sup_{k\in \Z}  \Vert \lambda_{k}\Vert_{L^\infty(SM)}k^\alpha<+\infty.
\end{equation*}
\end{lemma}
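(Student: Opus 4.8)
The plan is to exploit the explicit integral formula \eqref{eq:definition-fourier-mode} for the Fourier modes together with repeated integration by parts in the angular variable. Writing $\lambda_k(x,v) = \frac{1}{2\pi}\int_0^{2\pi}\lambda(\rho_t(x,v))e^{-ikt}\,dt$, one observes that $\frac{d}{dt}\big[\lambda(\rho_t(x,v))\big] = (V\lambda)(\rho_t(x,v))$, since $\rho_t$ is the flow generated by $V$. Because $\rho_t$ is $2\pi$-periodic, all boundary terms vanish, so integrating by parts $\alpha$ times yields, for $k\neq 0$ and any $\alpha\in\N$, the identity $\lambda_k = (ik)^{-\alpha}(V^\alpha\lambda)_k$. (Equivalently, taking the $k$-th Fourier mode commutes with $V$, so $(V^\alpha\lambda)_k = V^\alpha(\lambda_k) = (ik)^\alpha\lambda_k$ using $V\lambda_k = ik\lambda_k$, which gives the same relation.)

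Next, from the integral formula and $|e^{-ikt}| = 1$, the sup norm of any Fourier mode of a smooth function is controlled by the sup norm of the function itself: $\|(V^\alpha\lambda)_k\|_{L^\infty(SM)} \le \|V^\alpha\lambda\|_{L^\infty(SM)}$ for all $k$. Combining this with the identity above gives $\|\lambda_k\|_{L^\infty(SM)}\,|k|^\alpha \le \|V^\alpha\lambda\|_{L^\infty(SM)}$ for every $k\neq 0$, and $\|V^\alpha\lambda\|_{L^\infty(SM)}$ is finite since $\lambda$ is smooth and $SM$ is compact. The $k=0$ term contributes $0$ when $\alpha\geq 1$, and for $\alpha = 0$ the statement reduces to $\sup_k\|\lambda_k\|_{L^\infty(SM)} \le \|\lambda\|_{L^\infty(SM)} < \infty$. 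Taking the supremum over $k\in\Z$ then gives the claim (noting $\|\lambda_k\|_{L^\infty(SM)}\,k^\alpha \le \|\lambda_k\|_{L^\infty(SM)}\,|k|^\alpha$).

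There is no genuine obstacle here; the argument is a routine Fourier-decay estimate. The only points that merit a word of care are the vanishing of the boundary terms in the integration by parts, which relies on the $2\pi$-periodicity of $\rho_t$, and the separate (trivial) treatment of the zero mode.
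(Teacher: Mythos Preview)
Your proof is correct, and in fact more direct than the paper's. The paper works in local coordinate charts $(x,\theta)$ with $V=\partial_\theta$, applies the Sobolev embedding $\|\lambda_k\|_{L^\infty(U)}\le C\sum_{|\beta|\le 2}\|D^\beta\lambda_k\|_{L^2(U)}$, observes that $D^\beta\lambda_k=(D^\beta\lambda)_k$ in these coordinates, and then appeals to the (unproved but standard) rapid decay of the $L^2$ norms of the Fourier modes of a smooth function. Your argument bypasses all of this: the single global identity $(ik)^\alpha\lambda_k=(V^\alpha\lambda)_k$, together with the trivial estimate $\|(V^\alpha\lambda)_k\|_{L^\infty}\le\|V^\alpha\lambda\|_{L^\infty}$, already gives the $L^\infty$ decay without ever passing through $L^2$, Sobolev embedding, or coordinate charts. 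The paper's route has the minor advantage that, once one has $D^\beta\lambda_k=(D^\beta\lambda)_k$, it gives control of all derivatives of $\lambda_k$ simultaneously; your route is shorter and self-contained for the statement as written.
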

\begin{proof}
For any point on $SM$, let $U\subset SM$ be an open neighbourhood admitting smooth coordinates $(x, \theta)\in \R^2\times \mathbb{S}^1$ such that $V= \partial_\theta$. The Sobolev embedding theorem gives us a constant $C>0$ such that
\begin{equation*}
\Vert \lambda_k\Vert_{L^\infty(U)}\leq C\sum_{|\beta|\leq 2}\Vert D^\beta \lambda_k\Vert_{L^2(U)} \qquad \text{for all } k\in \Z.
\end{equation*}
Here, we use the multi-index notation $\beta=(\beta_1, \beta_2, \beta_3)\in \N^3$ and $D^\beta\coloneqq\partial_{x_1}^{\beta_1}\partial_{x_2}^{\beta_2}\partial_\theta^{\beta_3}$. 

Using the explicit formula \eqref{eq:definition-fourier-mode} on $U$, we may write
\begin{equation*}%\label{eq:mode-definition-restricted}
\begin{alignedat}{1}
\lambda_k(x, \theta)&=\dfrac{1}{2\pi}\int_0^{2\pi} \lambda(x, \theta+t)e^{-ikt} \, dt.\\
%&=e^{ik\theta}\dfrac{1}{2\pi}\int_0^{2\pi} \lambda(x, t)e^{-ikt} dt. 
\end{alignedat}
\end{equation*}
Therefore, still on the domain $U$, we can see that
$$D^\beta \lambda_k= (D^\beta \lambda)_k.$$
By compactness, we can cover $SM$ with a finite number of such open sets $\{ U_j\}$. Then, we get
\begin{equation*}
\begin{alignedat}{1}
\Vert \lambda_k \Vert_{L^\infty(SM)} &= \max_{j} \Vert \lambda_k \Vert_{L^\infty(U_j)}\\
&\leq \max_{j}  C \sum_{|\beta|\leq 2} \Vert (D^\beta\lambda)_k\Vert_{L^2(U_j)}\\
&\leq C \sum_{|\beta|\leq 2} \Vert (D^\beta\lambda)_k\Vert_{L^2(SM)}.
\end{alignedat}
\end{equation*}
Since the $L^2$ norms of the Fourier modes of a smooth function are rapidly decaying, we obtain the desired result. 
\end{proof}

The following lemma has the same proof as \cite[Lemma 4.1]{assylbekov17}. The argument relies on the Pestov identity and Theorem \ref{theorem:alpha-regularity}.
\begin{lemma}\label{lemma:hanming-lemma}
Let $(M, g, \lambda)$ be an Anosov thermostat. Then, there exists a constant $C>0$ such that
$$\Vert u \Vert_{H^1(SM)}\leq C\Vert VFu\Vert_{L^2(SM)}$$
for all $u\in \bigoplus_{|k|\geq 1}\Omega_k$.
\end{lemma}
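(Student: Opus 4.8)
The plan is to feed $u$ into the Pestov identity \eqref{eq:pestov-identity} and then bound its right-hand side from below by means of the $\alpha$-controlled estimate of Theorem \ref{theorem:alpha-regularity}, applied not to $u$ but to $Vu$. It suffices to argue for smooth $u$ with $u_0 = 0$ (and then, if $\bigoplus_{|k|\ge 1}\Omega_k$ is to be understood as a closure, to pass to the limit by density). First I would rewrite \eqref{eq:pestov-identity} as
\[
\Vert VFu\Vert^2 = \bigl(\Vert FVu\Vert^2 - \langle \mathbb{K} Vu, Vu\rangle\bigr) + \Vert Fu\Vert^2 ,
\]
and, noting that $Vu \in \mathcal{C}^\infty(SM)$, invoke Theorem \ref{theorem:alpha-regularity} with $Vu$ in place of $u$ to bound the bracketed term from below by $\alpha\bigl(\Vert FVu\Vert^2 + \Vert Vu\Vert^2\bigr)$. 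This gives
\[
\Vert VFu\Vert^2 \ \geq\ \alpha\Vert FVu\Vert^2 + \alpha\Vert Vu\Vert^2 + \Vert Fu\Vert^2 ,
\]
so that $\Vert FVu\Vert$, $\Vert Vu\Vert$ and $\Vert Fu\Vert$ are each $\lesssim \Vert VFu\Vert$, with constant depending only on $\alpha$.

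Next I would upgrade these to control of the full $H^1$ norm. Since $u$ has no Fourier mode of degree $0$, the Poincaré-type inequality $\Vert u\Vert^2 \le \sum_{|k|\ge 1} k^2\Vert u_k\Vert^2 = \Vert Vu\Vert^2$ yields $\Vert u\Vert \lesssim \Vert VFu\Vert$; this is the one place where the hypothesis $u \in \bigoplus_{|k|\ge 1}\Omega_k$ is essential, since a nonzero function pulled back from $M$ would otherwise violate the estimate. For the horizontal derivative I would use $H = [V,X]$ and $[V,\lambda V] = (V\lambda)V$, whence $[V,F] = H + (V\lambda)V$ and therefore $Hu = VFu - FVu - (V\lambda)Vu$; the bounds of the first paragraph then give $\Vert Hu\Vert \le \Vert VFu\Vert + \Vert FVu\Vert + \Vert V\lambda\Vert_{L^\infty}\Vert Vu\Vert \lesssim \Vert VFu\Vert$. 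Finally $F = X + \lambda V$ gives $\Vert Xu\Vert \le \Vert Fu\Vert + \Vert \lambda\Vert_{L^\infty}\Vert Vu\Vert \lesssim \Vert VFu\Vert$. Since $(X,H,V)$ is a global orthonormal frame for the Sasaki metric, $\Vert u\Vert_{H^1(SM)}^2$ is equivalent to $\Vert u\Vert^2 + \Vert Xu\Vert^2 + \Vert Hu\Vert^2 + \Vert Vu\Vert^2$, and assembling the inequalities above yields $\Vert u\Vert_{H^1(SM)} \le C\Vert VFu\Vert_{L^2(SM)}$ with $C$ depending only on $\alpha$, $\Vert \lambda\Vert_{L^\infty}$ and $\Vert V\lambda\Vert_{L^\infty}$.

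The only genuinely delicate point — and the reason the estimate is not completely formal — is the decision to apply Theorem \ref{theorem:alpha-regularity} to $Vu$ rather than to $u$ itself: this is what makes the term $\Vert FVu\Vert^2$ available on the right-hand side, which is in turn indispensable for estimating the horizontal derivative through the commutator $[V,F]$. Everything after that is bookkeeping with the frame $(X,H,V)$ and the bounded multipliers $\lambda$ and $V\lambda$, exactly along the lines of \cite[Lemma 4.1]{assylbekov17}.
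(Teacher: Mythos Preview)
Your proof is correct and follows precisely the approach the paper indicates: combine the Pestov identity \eqref{eq:pestov-identity} with Theorem \ref{theorem:alpha-regularity} applied to $Vu$, exactly as in \cite[Lemma 4.1]{assylbekov17}. The paper does not spell out the details, but the inequality $\Vert VFu\Vert^2 \geq \Vert Fu\Vert^2 + \alpha\Vert FVu\Vert^2 + \alpha\Vert Vu\Vert^2$ you derive is reproduced verbatim in the proof of the next lemma, confirming that your argument matches the intended one.
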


Next, we define the projection operator $T:\mathcal{C}^\infty(SM)\to \bigoplus_{|k|\geq 2}\Omega_k$ by
\begin{equation}\label{eq:defn-T}
 Tu\coloneqq \sum_{|k|\geq 2}u_k.
\end{equation}
We also define $Q:\mathcal{C}^\infty(SM)\to \bigoplus_{|k|\geq 2}\Omega_k$ as $Q\coloneqq TVF$. 

\begin{lemma}
Let $(M, g, \lambda)$ be an Anosov thermostat. Then, there exists a constant $C>0$ such that 
\begin{equation*}
\Vert u \Vert_{H^1(SM)}\leq C\Vert Qu \Vert_{L^2(SM)}
\end{equation*}
for all $u\in \bigoplus_{|k|\geq 1}\Omega_k$. 
\end{lemma}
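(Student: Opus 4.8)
The plan is to bootstrap from the previous lemma (the estimate $\Vert u\Vert_{H^1}\leq C\Vert VFu\Vert_{L^2}$ valid on $\bigoplus_{|k|\geq 1}\Omega_k$, which is Lemma \ref{lemma:hanming-lemma}) to one where $VFu$ is replaced by its truncation $Qu = TVFu$. The difference between $VFu$ and $Qu$ consists of the Fourier modes of degree $-1$, $0$ and $1$, so the strategy is to show that, for $u\in\bigoplus_{|k|\geq 1}\Omega_k$, these low-degree modes of $VFu$ are controlled by the high-degree part $Qu$ plus a term that can be absorbed.

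First I would write $VFu = Qu + (VFu)_{-1}+(VFu)_0+(VFu)_1$ and, using the Fourier-mode expansion of $F=X+\lambda V$ together with the fact that $\lambda$ has finite Fourier degree in the cases of interest — or, more carefully, using Lemma \ref{lemma:rapid-decay} to control the $L^\infty$ norms of the Fourier modes $\lambda_k$ when $\lambda$ is a general smooth function — estimate each of the three low-degree modes $(VFu)_j$ for $j\in\{-1,0,1\}$ in $L^2$. Since $u$ has no zeroth mode, $(VFu)_0 = \eta_-(VFu)\big|_{\text{from }u_1}+\cdots$; concretely each $(VFu)_j$ is a finite (or rapidly convergent) combination of $\eta_\pm u_{j\mp1}$ and $\lambda$-multiplications of the modes $u_k$, and all of these are bounded by $\Vert u\Vert_{H^1(SM)}$ times a constant depending only on $\lambda$ and $g$. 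This gives
$$\Vert VFu\Vert_{L^2}\leq \Vert Qu\Vert_{L^2}+C'\Vert u\Vert_{H^1(SM)}.$$

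The main obstacle — and the only subtle point — is that the constant $C'$ above is \emph{not} small, so one cannot directly absorb $C'\Vert u\Vert_{H^1}$ into the left-hand side after applying Lemma \ref{lemma:hanming-lemma}. To fix this I would rescale: replace $u$ by its high-frequency part. Observe that if $u\in\bigoplus_{|k|\geq N}\Omega_k$ for $N$ large, then the low-degree modes $(VFu)_{-1},(VFu)_0,(VFu)_1$ are \emph{empty} (they require modes $u_k$ with $|k|\leq N$, of which there are none once $N\geq 2$), so on $\bigoplus_{|k|\geq 2}\Omega_k$ we simply have $VFu=Qu$ and Lemma \ref{lemma:hanming-lemma} directly yields $\Vert u\Vert_{H^1}\leq C\Vert Qu\Vert_{L^2}$. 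It then remains to treat the finite-dimensional complement: the modes $u_1,u_{-1}$ (and, if one prefers to truncate at a higher level, finitely many more). Decompose $u = u^{\mathrm{low}}+u^{\mathrm{high}}$ with $u^{\mathrm{low}}=u_1+u_{-1}$ and $u^{\mathrm{high}}=Tu$; apply the clean estimate to $u^{\mathrm{high}}$, bound $Qu^{\mathrm{high}}$ by $Qu$ plus a term involving $u^{\mathrm{low}}$ (again via the $\lambda$-multiplication structure and Lemma \ref{lemma:rapid-decay}), and finally control $u^{\mathrm{low}}$: since $\eta_-$ restricted to $\Omega_1\cap(\ker\eta_-)^{\perp}$ and $\eta_+$ on $\Omega_{-1}$ are injective with closed range (by the Anosov/ellipticity inputs already invoked for Lemma \ref{lemma:hanming-lemma}, or directly because $\eta_\mp$ is elliptic of order one on the relevant spaces over a surface of genus $\geq 2$), one has $\Vert u^{\mathrm{low}}\Vert_{H^1}\lesssim \Vert Vu^{\mathrm{low}}\Vert_{L^2}+\Vert \eta_\mp u^{\mathrm{low}}\Vert_{L^2}$, and the latter quantities are pieces of $VFu$, hence controlled by $\Vert Qu\Vert_{L^2}+\Vert u^{\mathrm{high}}\Vert_{H^1}$. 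Combining and absorbing the (now genuinely absorbable, because finite-dimensional and lower-order) cross terms gives the claimed inequality $\Vert u\Vert_{H^1(SM)}\leq C\Vert Qu\Vert_{L^2(SM)}$ on all of $\bigoplus_{|k|\geq 1}\Omega_k$.

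I expect the bulk of the work to be the bookkeeping in the last step — verifying that the finitely many low-degree modes can be absorbed — while the conceptual content is entirely contained in Lemma \ref{lemma:hanming-lemma} and the rapid-decay Lemma \ref{lemma:rapid-decay}.
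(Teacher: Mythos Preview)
Your proposal has a concrete error that breaks the argument as written. You claim that on $\bigoplus_{|k|\geq 2}\Omega_k$ the low modes $(VFu)_{\pm 1}$ vanish, so that $VFu=Qu$ there. This is false: since $X=\eta_++\eta_-$ shifts Fourier degree by $\pm 1$, the mode $(Fu)_1$ contains $\eta_- u_2$, which is generically nonzero even for the geodesic flow $\lambda=0$. For arbitrary smooth $\lambda$ and $u\in\bigoplus_{|k|\geq N}\Omega_k$, the term $(\lambda Vu)_1=\sum_{|k|\geq N}ik\lambda_{1-k}u_k$ is also nonzero. (It is true that for large $N$ this $\lambda$-term is \emph{small} relative to $\Vert Vu\Vert$ by rapid decay of $\Vert\lambda_j\Vert_{L^\infty}$, which could salvage the high-frequency step via genuine absorption --- but you did not say this, and $N=2$ certainly does not suffice.) A second slip: the complement $\bigoplus_{1\leq|k|\leq N-1}\Omega_k$ is not finite-dimensional, as each $\Omega_k$ is infinite-dimensional. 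Your final step then appeals to injectivity of $\eta_\mp$ on $\Omega_{\pm 1}\cap(\ker\eta_\mp)^\perp$, but this says nothing about the component of $u_{\pm 1}$ lying in $\ker\eta_\mp$ (the holomorphic and anti-holomorphic $1$-forms), so the control of $u^{\mathrm{low}}$ is incomplete.

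The paper bypasses absorption entirely. From the Pestov identity together with the $\alpha$-control of Theorem~\ref{theorem:alpha-regularity} one has $\Vert VFu\Vert^2\geq\Vert Fu\Vert^2+\alpha\Vert FVu\Vert^2+\alpha\Vert Vu\Vert^2$. Since $\Vert VFu\Vert^2=\Vert(Fu)_1\Vert^2+\Vert(Fu)_{-1}\Vert^2+\Vert Qu\Vert^2$ while $\Vert Fu\Vert^2\geq\Vert(Fu)_1\Vert^2+\Vert(Fu)_{-1}\Vert^2$, subtraction gives directly $\Vert Qu\Vert^2\geq\alpha\Vert Vu\Vert^2$ and $\Vert Qu\Vert^2\geq\alpha\Vert(FVu)_{\pm 1}\Vert^2$. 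The first bound controls $\sum_k k^2\Vert u_k\Vert^2$; writing out $(FVu)_{\pm 1}$ and using the reverse triangle inequality, Lemma~\ref{lemma:rapid-decay}, and the first bound then isolates $\Vert\eta_\mp u_{\pm 2}\Vert\leq C\Vert Qu\Vert$. Since $(Fu)_{\pm 1}=\eta_\mp u_{\pm 2}+(\lambda Vu)_{\pm 1}$ (using $u_0=0$), both pieces are now bounded by $\Vert Qu\Vert$, and Lemma~\ref{lemma:hanming-lemma} finishes. The key point you are missing is that the Pestov structure lets one \emph{subtract} the troublesome low modes $\Vert(Fu)_{\pm 1}\Vert^2$ rather than absorb them.
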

\begin{proof}
In this proof, we will let $C>0$ be a constant which can change from line to line to simplify the notation. 

Let $u\in \bigoplus_{|k|\geq 1}\Omega_k$. From the definition of $Q$, we have
\begin{equation*}
\Vert VF u\Vert^2=\Vert (Fu)_1\Vert^2+\Vert (Fu)_{-1}\Vert^2+\Vert Qu\Vert^2.
\end{equation*}
From Lemma \eqref{lemma:hanming-lemma}, we know that
\begin{equation*}
\Vert u \Vert_{H^1(SM)}\leq C\Vert VF u\Vert_{L^2(SM)},
\end{equation*}
so it remains to show that $\Vert (Fu)_{\pm 1}\Vert \leq C \Vert Q u\Vert$. 

By Theorem \ref{theorem:alpha-regularity} and the Pestov identity \eqref{eq:pestov-identity}, we have
\begin{equation*}
\begin{alignedat}{1}
\Vert VF u\Vert^2 &\geq \Vert Fu\Vert^2+\alpha \Vert FVu \Vert^2+\alpha\Vert Vu\Vert^2.
\end{alignedat}
\end{equation*}
Therefore, we obtain
\begin{equation}\label{eq:first-lower-bound}
\Vert Qu\Vert^2\geq \alpha \Vert Vu\Vert^2\geq \alpha \sum_{k\in \Z}k^2\Vert u_k\Vert^2.
\end{equation}
It also gives us
\begin{equation*}
\begin{alignedat}{1}
\Vert Q u \Vert^2&\geq \alpha \Vert F V u \Vert^2\\
&\geq  \alpha \Vert( F V u)_{1} \Vert^2+  \alpha \Vert( F V u)_{-1} \Vert^2\\
&=\alpha \left\Vert 2i\eta_-u_2-\sum_{k\in \Z}k^2\lambda_{-k+1} u_{k}\right\Vert^2+\alpha\left\Vert -2i \eta_+u_{-2}-\sum_{k\in Z}k^2\lambda_{-k-1} u_{k}\right\Vert^2.
\end{alignedat}
\end{equation*}
Therefore,
$$\left\Vert 2i\eta_- u_{2}- \sum_{k\in \Z}k^2\lambda_{-k+ 1} u_{k}\right\Vert\leq C \Vert Qu\Vert$$
and 
$$\left\Vert 2i \eta_+u_{-2}+\sum_{k\in \Z}k^2\lambda_{-k-1} u_{k}\right\Vert\leq C\Vert Qu\Vert.$$
By the reverse triangle inequality, we get
$$\Vert 2\eta_-u_2\Vert - \left\Vert\sum_{k\in \Z }k^2\lambda_{-k+1}  u_{k}\right\Vert\leq C \Vert Qu\Vert$$
and 
$$\Vert 2 \eta_+u_{-2}\Vert-\left\Vert\sum_{k\in \Z }k^2\lambda_{-k-1} u_{k}\right\Vert\leq C\Vert Qu\Vert.$$
By the triangle inequality, the Cauchy--Schwarz inequality, Lemma \ref{lemma:rapid-decay} and property \eqref{eq:first-lower-bound}, we obtain
\begin{equation*}
\begin{alignedat}{1}
\left\Vert\sum_{k\in \Z}k^2\lambda_{-k\pm 1}u_k \right\Vert & \leq \sum_{k\in \Z}k^2\Vert \lambda_{-k\pm 1}\Vert_{L^\infty(SM)} \Vert u_k\Vert\\
&\leq \left( \sum_{k\in \Z} k^2\Vert \lambda_{-k\pm 1}\Vert_{L^\infty(SM)}^2\right)^{1/2}\left( \sum_{k\in \Z} k^2\Vert u_{k}\Vert^2 \right)^{1/2}\\
&\leq C \left( \sum_{k\in \Z} k^2\Vert u_{k}\Vert^2 \right)^{1/2}\\
&\leq C \Vert Q u \Vert. 
\end{alignedat}
\end{equation*}
This gives us 
$$\Vert \eta_-u_2\Vert \leq C \Vert Qu\Vert\qquad \text{and}\qquad \Vert \eta_+u_{-2}\Vert \leq C \Vert Qu\Vert. $$
The result then follows because
\begin{equation*}
\begin{alignedat}{1}
\Vert (Fu)_1\Vert &= \left\Vert \eta_-u_2 + \sum_{k\in \Z} i k\lambda_{-k+1} u_{k}\right\Vert\\
&\leq \Vert \eta_- u_2\Vert + \sum_{k\in \Z}k\Vert \lambda_{-k+1}\Vert_{L^\infty(SM)} \Vert u_{k}\Vert\\
&\leq C \Vert Qu\Vert
\end{alignedat}
\end{equation*}
and 
\begin{align*}
\Vert (Fu)_{-1}\Vert &= \left\Vert \eta_+u_{-2} + \sum_{k\in \Z} ik\lambda_{-k-1} u_{k}\right\Vert\\
&\leq \Vert \eta_+ u_{-2}\Vert + \sum_{k\in \Z}k\Vert \lambda_{-k-1}\Vert_{L^\infty(SM)}\Vert  u_{k}\Vert\\
&\leq C \Vert Qu\Vert.\qedhere
\end{align*}
\end{proof}

The rest of the proof of Theorem \ref{theorem:extension-forms} then goes exactly as in \cite{assylbekov17}.
\bibliographystyle{alpha}
\bibliography{references}

\begin{thebibliography}{dlLMM86}

\bibitem[Ain15]{ainsworth15}
Gareth Ainsworth.
\newblock {The magnetic ray transform on Anosov surfaces}.
\newblock {\em Discrete and Continuous Dynamical Systems}, 35(5):1801--1816,
  2015.

\bibitem[AR21]{assylbekov21}
Yernat~M. Assylbekov and Franklin~T. Rea.
\newblock {The attenuated ray transforms on Gaussian thermostats with negative
  curvature}.
\newblock {\em Preprint, arXiv:2102.04571}, 2021.

\bibitem[AZ17]{assylbekov17}
Yernat~M. Assylbekov and Hanming Zhou.
\newblock {Invariant distributions and tensor tomography for Gaussian
  thermostats}.
\newblock {\em Communications in Analysis and Geometry}, 25(5):895--926, 2017.

\bibitem[BT82]{bott82}
Raoul Bott and Loring~W. Tu.
\newblock {\em {Differential Forms in Algebraic Topology}}, volume~82 of {\em
  Graduate Texts in Mathematics}.
\newblock Springer, New York, 1982.

\bibitem[CP25]{cekic25b}
Mihajlo Cekić and Gabriel~P. Paternain.
\newblock {Resonant forms at zero for dissipative Anosov flows}.
\newblock {\em (To appear in) Geometry \& Topology}, 2025.

\bibitem[dlLMM86]{de-la-llave86}
Rafael de~la Llave, José~Manuel Marco, and Roberto Moriyón.
\newblock {Canonical perturbation theory of Anosov systems and regularity
  results for the Livsic cohomology equation}.
\newblock {\em Annals of Mathematics}, 123(3):537--611, 1986.

\bibitem[DP05]{dairbekov05}
Nurlan~S. Dairbekov and Gabriel~P. Paternain.
\newblock {Longitudinal KAM-cocycles and action spectra of magnetic flows}.
\newblock {\em Mathematical Research Letters}, 12(5):719--730, 2005.

\bibitem[DP07]{dairbekov07}
Nurlan~S. Dairbekov and Gabriel~P. Paternain.
\newblock {Entropy production in Gaussian thermostats}.
\newblock {\em Communications in Mathematical Physics}, 269(2):533--543, 2007.

\bibitem[DS03]{dairbekov03}
Nurlan~S. Dairbekov and Vladimir~A. Sharafutdinov.
\newblock {Some problems of integral geometry on Anosov manifolds}.
\newblock {\em Ergodic Theory and Dynamical Systems}, 23(1):59--74, 2003.

\bibitem[FK92]{farkas92}
Hershel~M. Farkas and Irwin Kra.
\newblock {\em {Riemann Surfaces}}, volume~71 of {\em Graduate Texts in
  Mathematics}.
\newblock Springer, New York, 2 edition, 1992.

\bibitem[FM12]{farb12}
Benson Farb and Dan Margalit.
\newblock {\em {A Primer on Mapping Class Groups}}, volume~49 of {\em Princeton
  Mathematical Series}.
\newblock Princeton University Press, Princeton, NJ, 2012.

\bibitem[FT99]{friedlander99}
F.~Gerard Friedlander and Mark Toshi.
\newblock {\em {Introduction to the Theory of Distributions}}.
\newblock Cambridge University Press, Cambridge, 2 edition, 1999.

\bibitem[Ghy84]{ghys84}
Étienne Ghys.
\newblock {Flots d'Anosov sur les 3-variétés fibrées en cercles}.
\newblock {\em Ergodic Theory and Dynamical Systems}, 4(1):67--80, 1984.

\bibitem[GK80]{guillemin80}
Victor Guillemin and David Kazhdan.
\newblock {Some inverse spectral results for negatively curved 2-manifolds}.
\newblock {\em Topology}, 19(3):301--312, 1980.

\bibitem[GLP25]{guillarmou25}
Colin Guillarmou, Thibault Lefeuvre, and Gabriel~P. Paternain.
\newblock {Marked length spectrum rigidity for Anosov surfaces}.
\newblock {\em Duke Mathematical Journal}, 174(1):131--157, 2025.

\bibitem[GR23]{gogolev23}
Andrey Gogolev and Federico {Rodríguez Hertz}.
\newblock {Smooth rigidity for $3$-dimensional volume preserving Anosov flows
  and weighted marked length spectrum rigidity}.
\newblock {\em Preprint, arXiv:2210.02295}, 2023.

\bibitem[Gro99]{grognet99}
Stéphane Grognet.
\newblock {Flots magnétiques en courbure négative}.
\newblock {\em Ergodic Theory and Dynamical Systems}, 19(2):413--436, 1999.

\bibitem[Gui17]{guillarmou17}
Colin Guillarmou.
\newblock {Invariant distributions and X-ray transform for Anosov flows}.
\newblock {\em Journal of Differential Geometry}, 105(2):177--208, 2017.

\bibitem[Has94]{hasselblatt94}
Boris Hasselblatt.
\newblock {Regularity of the Anosov splitting and of horospheric foliations}.
\newblock {\em Ergodic Theory and Dynamical Systems}, 14(4):645--666, 1994.

\bibitem[H{\"o}r03]{hormander03}
Lars H{\"o}rmander.
\newblock {\em {The Analysis of Linear Partial Differential Operators I:
  Distribution Theory and Fourier Analysis}}.
\newblock Classics in Mathematics. Springer, Berlin, 2 edition, 2003.

\bibitem[H{\"o}r09]{hormander09}
Lars H{\"o}rmander.
\newblock {\em {The Analysis of Linear Partial Differential Operators IV:
  Fourier Integral Operators}}.
\newblock Classics in Mathematics. Springer, Berlin, 2009.

\bibitem[{Mar}23]{marshall-reber23}
James {Marshall Reber}.
\newblock {Deformative magnetic marked length spectrum}.
\newblock {\em Bulletin of the London Mathematical Society}, 55(6):3077--3096,
  2023.

\bibitem[{Mar}24]{marshall-reber24}
James {Marshall Reber}.
\newblock {Corrigendum: Deformative magnetic marked length spectrum}.
\newblock {\em Bulletin of the London Mathematical Society}, 56(12):3920--3923,
  2024.

\bibitem[MP19]{mettler19}
Thomas Mettler and Gabriel~P. Paternain.
\newblock {Holomorphic differentials, thermostats and Anosov flows}.
\newblock {\em Mathematische Annalen}, 373(1):553--580, 2019.

\bibitem[MP20]{mettler20}
Thomas Mettler and Gabriel~P. Paternain.
\newblock {Convex projective surfaces with compatible Weyl connection are
  hyperbolic}.
\newblock {\em Analysis \& PDE}, 13(4):1073--1097, 2020.

\bibitem[MP22]{mettler22}
Thomas Mettler and Gabriel~P. Paternain.
\newblock {Vortices over Riemann surfaces and dominated splittings}.
\newblock {\em Ergodic Theory and Dynamical Systems}, 42(5):1781--1806, 2022.

\bibitem[Pat06]{paternain06}
Gabriel~P. Paternain.
\newblock {Magnetic rigidity of horocycle flows}.
\newblock {\em Pacific Journal of Mathematics}, 225(2):301--323, 2006.

\bibitem[PS23]{paternain23}
Gabriel~P. Paternain and Mikko Salo.
\newblock {Carleman estimates for geodesic X-ray transforms}.
\newblock {\em Annales Scientifiques de l’École Normale Supérieure},
  56(5):1339--1380, 2023.

\bibitem[PSU14]{paternain14}
Gabriel~P. Paternain, Mikko Salo, and Gunther Uhlmann.
\newblock {Spectral rigidity and invariant distributions on Anosov surfaces}.
\newblock {\em Journal of Differential Geometry}, 98(1):147--181, 2014.

\bibitem[PSU23]{paternain23b}
Gabriel~P. Paternain, Mikko Salo, and Gunther Uhlmann.
\newblock {\em {Geometric Inverse Problems: With Emphasis on Two Dimensions}},
  volume 204 of {\em Cambridge Studies in Advanced Mathematics}.
\newblock Cambridge University Press, Cambridge, 2023.

\bibitem[PW08]{przytycki08}
Piotr Przytycki and Maciej~P. Wojtkowski.
\newblock {Gaussian thermostats as geodesic flows of nonsymmetric linear
  connections}.
\newblock {\em Communications in Mathematical Physics}, 277(3):759--769, 2008.

\bibitem[Woj00]{wojtkowski00b}
Maciej~P. Wojtkowski.
\newblock {W-flows on Weyl manifolds and Gaussian thermostats}.
\newblock {\em Journal de Mathématiques Pures et Appliquées},
  79(10):953--974, 2000.

\end{thebibliography}
\end{document}